\newtheorem{theorem}{Theorem}[section]
\newtheorem{lemma}[theorem]{Lemma}
\newtheorem{corollary}[theorem]{Corollary} 
\theoremstyle{definition}
\newtheorem{remark}[theorem]{Remark} 
\numberwithin{equation}{section}
\def\B{\mathscr B}
\def\L{\mathsf L}
\def\fb{\mathfrak b}
\def\C{\mathcal C}
\def\dom{\text{\rm dom}}
\def\ran{\text{\rm ran}}
\def\RE{\mathbb R}
\def\CO{{\mathbb C}}
\def\ph*{\phi_\star}
\def\be{\begin{equation}}
\def\ee{\end{equation}}
\def\min{{\rm min}}
\def\max{{\rm max}}
\def\-{{\rm in}}
\def\+{{\rm ex}}
\def\B {{\mathscr B}}
\def\SL{S\!L}
\def\DL{D\!L}
\def\DN{D\!N}
\def\ve{\varepsilon}
\def\sb{{\mathsf B}}
\title[Resolvent, spectrum and resonances for the acoustic operator]{Resolvent, spectrum and resonances for the acoustic operator with piecewise constant coefficients}
\author{A. Mantile, A. Posilicano}
\address{Laboratoire de Math\'{e}matiques de Reims, 
Universit\'{e} de Reims Champagne-Ardenne, Moulin de la Housse BP 1039, 51687
Reims, France}
\address{DiSAT, Sezione di Matematica, Universit\`a dell'Insubria, via Valleggio 11, I-22100
Como, Italy}
\email{andrea.mantile@univ-reims.fr}
\email{andrea.posilicano@unisubria.it}
\begin{document}

\begin{abstract} We study the acoustic operator $A_{v,\rho }:=v^{2}\rho\nabla\!\cdot\rho^{-1}\nabla$  with transmission conditions at the  boundary of $\Omega=\Omega_{1}\cup\dots\cup\Omega_{n}$, where the $\Omega_{\ell}$'s are connected disjoint open bounded Lipschitz domains, the positive functions $v$ and $\rho$ are constant on each connected component of $\Omega$ and $v=\rho=1$ on $\RE^{3}\backslash\overline\Omega$.
Through a formula for the resolvents difference $(-A_{v,\rho }+z)^{-1}-(-\Delta+z)^{-1}$, we provide a Limiting Absorption Principle, determine the spectrum, which turns out to be purely absolutely continuous, and, in the case the connected components of $\Omega$ are of class $\C^{1,\alpha}$, characterize the resonance set. The second part of the paper is devoted to the case where $\Omega=\Omega(\ve)$ is connected with a small size $\varepsilon$ and the $\varepsilon$-analytic functions $v=v(\varepsilon)$ and/or $\rho=\rho(\varepsilon)$ converge to $0_{+}$ inside $\Omega(\varepsilon)$ as $\varepsilon\downarrow 0$; there, we provide the analytic $\varepsilon$-expansions of the resonances of $A_{v,\rho }$ according to different choices of the rate of convergence towards zero of the material parameters.   
\end{abstract}

\maketitle

\section{Introduction}
We consider acoustic operators in  $L^{2}(\RE^{3})$%
\begin{equation}\label{A_full_def}
A_{v,\rho}:=v^{2}\!\rho\nabla\!\!\cdot\!\rho^{-1}\nabla
\end{equation}
whose material coefficients $v$ and $\rho$, defining the speed of propagation of the waves and the density of the medium respectively, are piecewise constants and positive-valued
functions. More precisely, given $\Omega_{1},\dots,\Omega_{n}$ open, connected and bounded Lipschitz domains in $\RE^{3}$ such that 
$$
\overline\Omega_{i}\cap\overline\Omega_{j}=\varnothing\,,\quad i\not=j\,,
$$ 
and given the strictly positive constants $\rho_{1},\dots,\rho_{n}$ and $v_{1},\dots, v_{n}$, the material coefficients are 
\be\label{mat}
v:= \chi_{\Omega_{\+}}+\sum_{\ell=1}^{n} v_{\ell}\,\chi_{\Omega_{\ell}}\,,\qquad
\rho:=\chi_{\Omega_{\+}}+ \sum_{\ell=1}^{n} \rho_{\ell}\,\chi_{\Omega_{\ell}}\,,
\ee
where $\Omega_{\+}$ is the connected domain $\Omega_{\+}:=\mathbb{R}^{3}\backslash\overline{\Omega}$ with $\Omega\equiv\Omega_{\-}:=\Omega_{1}\cup\dots\cup \Omega_{n}$; $\chi_{S}$ denotes the characteristic function of a set $S$.
\par
At the interfaces $\Gamma_{\ell}$'s given by the boundaries of the $\Omega_{\ell}$'s, we require the transmission conditions 
\begin{equation}
\gamma_{0,\ell}^{\mathrm{ex}}u=\gamma_{0,\ell}^{\mathrm{in}}u\,,\qquad
\rho_{\ell}\,\gamma_{1,\ell}^{\mathrm{ex}}u=\gamma_{1,\ell}^{\mathrm{in}%
}u\,,\qquad\ell=1,...n\,,\label{bc}%
\end{equation}
where $\gamma_{0,\ell}^{\-/\+} $ and $\gamma_{1,\ell}^{\-/\+}$ denote the lateral Dirichlet and Neumann traces (we refer to Sections 2 and 3 for the more detailed definitions in the appropriate functional spaces).  Such boundary conditions ensure the symmetry of $A_{v,\rho}$ in the weighted Hilbert space (equivalent, in the Banach space sense, to $L^{2}(\RE^{3})$) given by 
$L^{2}(\mathbb{R}^{3},b^{-1})  $, where $b:=v^{2}\!\rho=\chi_{\Omega_{\+}}%
+\sum_{\ell=1}^{n}v_{\ell}^{2}\rho_{\ell}\,\chi_{\Omega_{\ell}}$. Further, see Theorem \ref{acoustic}, $A_{v,\rho}$ turn out to be, on an appropriate domain, a not positive self-adjoint operator in $L^{2}(\mathbb{R}^{3},b^{-1})$. This latter property entails the existence and uniqueness of the solutions of the Cauchy problem for the wave equation  
\be\label{wave}
\frac{\partial^{2} u }{\partial{t}^{2}}=A_{v,\rho}u
\ee
which models the dynamics of the acoustic waves in a medium with bulk modulus $b$. Indeed, by, e.g., \cite[Proposition 3.14.4, Corollary 3.14.8 and Example 3.14.16]{Arendt}, the unique (mild) solution of the Cauchy problem for \eqref{wave} with initial data $u_{0}$ and $\dot u_{0}$ in $L^{2}(\RE)$ is given by 
$$
u(t)=\cos(t(-A_{v,\rho})^{1/2})\, u_{0}+(-A_{v,\rho})^{-1/2}\sin(t(-A_{v,\rho})^{1/2})\,\dot u_{0}\,,
$$ 
where the sine and cosine operator  functions are defined through the functional calculus for the self-adjoint $A_{v,\rho}$. Moreover, by the inversion of the Laplace-Stieltjes transform for operator valued functions (see, e.g., \cite[Theorem 2.3.4]{Arendt}), the above solution can be expressed in terms of the resolvent $(-A_{v,\rho}-\kappa^{2})^{-1}$. Such a resolvent, or better, a formula for the resolvent difference $(-A_{v,\rho}-\kappa^{2})^{-1}-(-\Delta-\kappa^{2})^{-1}$ turns out to be our main tool in the study of the spectrum and resonances. This difference formula is obtained in Section 4 proceeding as follows: the heuristic
re-writing
\[
A_{v,\rho}=v^{2}\rho\rho^{-1}\Delta u+v^{2}\rho\nabla\rho^{-1}\!\cdot\!\nabla
u=\Delta u+({v^{2}}-1)\chi_{\Omega}\Delta+v^{2}\rho\nabla\rho^{-1}%
\!\cdot\!\nabla u
\]
suggests that $A_{v,\rho}$ correspond to the free Laplacian plus a
perturbation made of a regular component, supported on $\Omega$, and a
singular one (due to the jumps of $\rho^{-1}$), supported on its boundary $\Gamma$. This
picture emerges more precisely in a distributional framework; using the Green identities
and the boundary conditions, one gets 
\begin{equation}
A_{v,\rho}u={\Delta}u+(  v^{2}-1)  1_{\Omega}^{\ast}\Delta_{\Omega}%
^{\text{\textrm{max}}}1_{\Omega}u+2\sum_{\ell=1}^{n}
\frac{\rho_{\ell}-1}{\rho_{\ell}+1}\,\gamma_{0,\ell}^{\ast}\,\gamma_{1,\ell}u  \,,
\label{A_weak}%
\end{equation}
where the $\gamma_{0,\ell}^{*}$'s denote the distributions supported on $\Gamma_{\ell}$ corresponding to the duals of the full Dirichlet trace operators (we refer to Section 2 for more notation and definitions). By \eqref{A_weak},  $A_{v,\rho}$ belongs to the class of operators of the kind 
\be\label{kind}
\Delta+B_{1}+(\gamma_{0}^{*}\oplus\gamma_{1}^{*})B_{2}(\gamma_{0}\oplus\gamma_{1})
\ee
where $\gamma_{0}$ and $\gamma_{1}$ denote the full Dirichlet and Neumann trace operators for the boundary $\Gamma=\Gamma_{1}\cup\dots\cup \Gamma_{n}$ and $B_{1}$ and $B_{2}$ are bounded operator in suitable function spaces. Operators of the kind \eqref{kind}, in the case $B_{1}$ and $B_{2}$ are symmetric (this is not the case for $A_{v,\rho}$), were built and studied in depth in \cite{JST}  on the basis of the abstract trace maps method for singular perturbations introduced in \cite{JFA} and developed in \cite{MaPo JMPA 19}; this method produces self-adjoint operators $A_{\mathsf B}$ which are mixed (i.e., regular plus singular) perturbations of a pivot self-adjoint
operator $A_{0}$ (see \cite[Section 2]{JST}. Let us notice that in the present case, contrarily to the ones studied in \cite{JST}, $A_{v,\rho}$ and the free Laplacian $\Delta$ are self-adjoint in different Hilbert spaces. In \cite{JST}, operators $A_{\mathsf B}$ are built through a resolvent formula of the kind
$$
(-A_{\sb}-\kappa^{2})^{-1}=(-A_{0}-\kappa^{2})^{-1}
+G_{\kappa}Q_{\kappa}^{-1}(B_{1}\oplus B_{2})
G_{-\bar\kappa}^{*}\,,
$$
where 
$$
Q_{\kappa}:=1-(B_{1}\oplus B_{2})\tau G_{\kappa}\,.
$$
The $Q$-function $\kappa\mapsto Q_{\kappa}$ has a key role
in \cite{JST}: it encodes all the spectral properties of the
perturbation, it is naturally involved in the derivation of the Limiting
Absorption Principle (LAP for short) and in the proof of the asymptotic completeness of the
wave operators for the scattering couple $(A_{\sb},A_{0})$. 
In particular, an explicit formula for the scattering
amplitude (see \cite[Theorem 3.10]{JST}) allows to
identify the resonances of $A_{\sb}$ with the poles in unphysical sheet Im$(\kappa)<0$ of the meromorphic map $\kappa\mapsto Q_{\kappa}^{-1}$.
\par
In the first part of our work, we adapt the construction in \cite{JST}
to the case of the acoustic operator $A_{v,\rho}$ treated as a mixed perturbation of the free Laplacian $\Delta$. The main
outcome is the definition of the acoustic $Q$-function
\be\label{acQ}
Q_{\kappa}=\begin{bmatrix}v^{2}+({v^{2}}-1)\kappa^{2}N_{\kappa}&({v^{2}}-1)\kappa^{2}1_{\Omega}\SL_{\kappa}\\-\widetilde\rho\,\gamma^{\-}_{1}N_{\kappa}&1-\widetilde\rho\,\gamma_{1}\SL_{\kappa}
\end{bmatrix}\,,\qquad \widetilde{\rho}:=2
\sum_{\ell=1}^{n}
\frac{\rho_{\ell}-1}{\rho_{\ell}+1}\,\chi_{\Gamma_{\ell}}
\ee
which is used to build a formula for the
resolvents difference $(-A_{v,\rho}-\kappa^{2})^{-1}-(-\Delta-\kappa^{2})^{-1}$, 
holding for any $\operatorname{Im}\kappa>0$ (see Theorems \ref{equal} and
\ref{RD}). This provides a
direct framework for the study of the resonances. By the mapping properties of traces and by the abstract results from \cite{Ren1}, a Limiting Absorption Principle for the acoustic resolvent is
derived in Theorem \ref{Theorem_LAP}. The continuity of the map $(v,\rho)\mapsto A_{v,\rho}$  is discussed in Section 6.\par
For the convenience of the reader, we summarize the main results of the first part of the paper in the following Theorem, referring to Sections 3 to 6 for more details and the proofs and to Section 2 for the definitions of the involved functional spaces and operators.
\begin{theorem}
\label{Part I}$\left.  {}\right.  \medskip$\newline$i)$ The self-adjoint representation in $L^{2}(\mathbb{R}^{3},{b}^{-1})$ of the 
acoustic operator is given by 
$$A_{v,\rho}=(\Delta_{\Omega_{\+}}^{\mathrm{max}}\oplus v^{2}\Delta_{\Omega }^{\mathrm{max}})|{\mathscr D}_{\rho}
\,,
$$
where 
\[
{\mathscr D}_{\rho}
:=\left\{  u\in H^{1}(\mathbb{R}^{3})\cap L^{2}_{\Delta}(\mathbb{R}^{3}\backslash\Gamma)\,:\, \rho_{\ell}\,\gamma
_{1,\ell}^{\mathrm{ex}}u=\gamma_{1,\ell}^{\mathrm{in}}u\,,\ \ell
=1,\dots,n\right\}\,.
\]
Its spectrum is 
$$
\sigma(  A_{v,\rho})  =\sigma_{\mathrm{ac}}(A_{v,\rho})  =(-\infty,0]\,.
$$\newline$ii)$ For any
$\kappa\in{\mathbb{C}}_{+}$ there holds%
\begin{equation}
(-A_{v,\rho}-\kappa^{2})^{-1}-(-\Delta-\kappa^{2})^{-1}=%
\begin{bmatrix}
R_{\kappa}1_{\Omega}^{\ast} & S\!L_{\kappa}%
\end{bmatrix}Q_{\kappa}^{-1}%
\begin{bmatrix}
({v^{2}}-1)\Delta_{\Omega}^{\mathrm{max}}1_{\Omega}R_{\kappa}\\
\widetilde{\rho}\,\gamma_{1}R_{\kappa}%
\end{bmatrix},
\, \label{A_Krein}%
\end{equation}
where $Q_{\kappa}$ defined in \eqref{acQ}.
\medskip\newline
$iii)$ (LAP) The limits%
\[
\lim_{\delta\searrow0}(-A_{v,\rho}+\lambda\pm i\delta)^{-1}\,,
\]
exist in ${\B}(L_{\alpha}^{2}(\mathbb{R}^{3}),L_{-\alpha}^{2}%
(\mathbb{R}^{3}))$ for any $\alpha>1/2$ and any $\lambda\in\mathbb{R}%
\backslash\{0\}$; the same hold true in the case $\lambda=0$ whenever
$\alpha>3/2$. The  two ${\B}(L_{\alpha}^{2}(\mathbb{R}^{3}),L_{-\alpha}^{2}(\mathbb{R}^{3}))$-valued extended resolvents defined through such limits are continuous  maps  on $\CO_{+}\cup\RE$ and on $\CO_{-}\cup\RE$ respectively.
\medskip\newline
$iv)$ If, for any $1\le \ell\le n$, the sequence $\{v_{\ell}(j),\rho_{\ell}(j)\}_{j=1}^{+\infty} \subset (0,+\infty)\times(0,+\infty)$ converges to $(v_{\ell},\rho_{\ell})\in
(0,+\infty)\times(0,+\infty)$ as $j\to+\infty$, then $\{A_{v(j),\rho(j)}\}_{j=1}^{+\infty}$ converges in norm resolvent sense to  $A_{v,\rho}$. If $\rho_{\ell}\to+\infty$, then the norm resolvent limit still exits as a well defined closed operator $A_{v,+\infty}$.
\end{theorem}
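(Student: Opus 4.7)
The plan is to address the four assertions in order, leveraging the distributional identity \eqref{A_weak} and adapting the trace-maps machinery of \cite{JST} to the weighted Hilbert space $L^{2}(\mathbb{R}^{3},b^{-1})$. For (i), I would establish the self-adjoint representation via a quadratic form argument. The non-negative sesquilinear form $\mathfrak{a}(u,w):=\int_{\mathbb{R}^{3}}\rho^{-1}\nabla u\cdot\overline{\nabla w}\,dx$ on $H^{1}(\mathbb{R}^{3})$ is densely defined and closed in $L^{2}(\mathbb{R}^{3},b^{-1})$, because the weight is bounded above and below. Identifying the associated self-adjoint operator by Green's identity on each $\Omega_{\ell}$ and on $\Omega_{\+}$ yields precisely $-A_{v,\rho}$ acting as $-(\Delta_{\Omega_{\+}}^{\mathrm{max}}\oplus v^{2}\Delta_{\Omega}^{\mathrm{max}})$; the Neumann transmission conditions in ${\mathscr D}_{\rho}$ are forced by the vanishing of the resulting boundary terms against arbitrary $H^{1}$ test functions, while the Dirichlet ones are automatic from the $H^{1}$-regularity. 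The spectral statement is then deduced from (ii) and (iii): the resolvent difference in \eqref{A_Krein} is compact between appropriate spaces so $\sigma_{\mathrm{ess}}(A_{v,\rho})=(-\infty,0]$; the LAP excludes singular continuous spectrum on $\mathbb{R}\setminus\{0\}$; and any embedded or threshold eigenvalue would produce a non-trivial kernel of $Q_{\lambda}$ at a real $\lambda$, which is ruled out by unique continuation for the divergence-form operator together with the form non-negativity.

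For (ii), the central task is to verify the Krein-type formula \eqref{A_Krein} and the invertibility of $Q_{\kappa}$ on $\mathbb{C}_{+}$. Following the abstract scheme of \cite{JST}, one obtains $Q_{\kappa}$ in the form $1-(B_{1}\oplus B_{2})\tau G_{\kappa}$ with $B_{1}$ the multiplication by $(v^{2}-1)\chi_{\Omega}$ composed with $\Delta_{\Omega}^{\mathrm{max}}$, $B_{2}=\widetilde{\rho}$, and $\tau$ the combined interior-Dirichlet and full-Neumann trace; a direct computation produces the block form \eqref{acQ}. I would then verify the formula by checking that, for $f\in L^{2}(\mathbb{R}^{3},b^{-1})$, the right-hand side of \eqref{A_Krein} yields a function $u\in{\mathscr D}_{\rho}$---the transmission conditions are built into the second row of $Q_{\kappa}^{-1}$ together with the jump relation for $\SL_{\kappa}$---and that $(-A_{v,\rho}-\kappa^{2})u=f$ via the distributional identity \eqref{A_weak}. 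The invertibility of $Q_{\kappa}$ follows by Fredholm analysis: the off-diagonal entries factor through boundary traces of $R_{\kappa}$-images and through single-layer potentials, which are compact on the relevant spaces, so $Q_{\kappa}$ is a compact perturbation of a block-triangular invertible operator, hence Fredholm of index zero; its kernel on $\mathbb{C}_{+}$ is empty because any kernel element would be mapped via the Krein formula to an eigenfunction of $A_{v,\rho}$ at a non-real energy.

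For (iii), the argument combines the explicit structure of \eqref{A_Krein} with the abstract LAP of \cite{Ren1}. The free resolvent satisfies the classical LAP in ${\B}(L_{\alpha}^{2}(\mathbb{R}^{3}),L_{-\alpha}^{2}(\mathbb{R}^{3}))$ for $\alpha>1/2$, with the stronger weight $\alpha>3/2$ needed at $\lambda=0$ to absorb the low-energy singularity of the three-dimensional Green's function; the boundary trace and single-layer potentials inherit the corresponding LAP in the associated weighted spaces on $\Gamma$ and $\Omega$. The remaining ingredient is the continuous extension of $\kappa\mapsto Q_{\kappa}^{-1}$ up to $\mathbb{R}\setminus\{0\}$ (and to $0$ with the stronger weight), which I would obtain by the analytic Fredholm theorem: $Q_{\kappa}$ depends analytically on $\kappa$ as a compact perturbation of the identity, so $Q_{\kappa}^{-1}$ is meromorphic in a neighbourhood of the real axis, and the absence of real poles coincides with the absence of real eigenvalues already established in (i).

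For (iv), the continuity in the material parameters reads directly from \eqref{A_Krein}, since the $(v_{\ell},\rho_{\ell})$-dependence enters only through the multipliers $(v^{2}-1)\chi_{\Omega}$ and $\widetilde{\rho}$; componentwise convergence gives norm convergence of $Q_{\kappa}(v(j),\rho(j))$ to $Q_{\kappa}(v,\rho)$ and, by invertibility, of the inverses, hence of the full resolvent difference for any fixed $\kappa\in\mathbb{C}_{+}$. In the limit $\rho_{\ell}\to+\infty$ one has $\frac{\rho_{\ell}-1}{\rho_{\ell}+1}\to 1$, so $\widetilde{\rho}$ still has a finite limit and the formal Krein formula keeps making sense; the delicate point is to verify that the limiting $Q$-function remains invertible on $\mathbb{C}_{+}$ and that the resulting expression defines the resolvent of a well-posed closed operator $A_{v,+\infty}$, which I expect to correspond to decoupling the interior problem on $\Omega_{\ell}$ and imposing a Dirichlet-type condition at $\Gamma_{\ell}$. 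This limiting identification, together with the meromorphic continuation of $Q_{\kappa}^{-1}$ across the real axis without real poles, is what I expect to be the principal technical obstacle across the whole argument.
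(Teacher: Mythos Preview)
Your plan for (i) and (iv) matches the paper's and is sound. The gap is in how you propose to establish invertibility of $Q_\kappa$ in (ii) and its boundary behaviour in (iii). You argue that $Q_\kappa$ is Fredholm of index zero as a compact perturbation of a block-diagonal invertible operator; this requires the diagonal entry $1-\widetilde\rho\,\gamma_1\SL_\kappa$ to be of the form identity plus compact, i.e.\ requires $\gamma_1\SL_\kappa=K_{-\bar\kappa}^*$ to be compact on $H^{-1/2}(\Gamma)$. That compactness is known for $\mathcal{C}^{1,\alpha}$ boundaries but fails in general for Lipschitz domains, which is the regularity assumed in this part of the paper. The paper avoids this entirely: it inverts $Q_0$ by hand, using only that $\sigma(K_0^*)\subset[-1/2,1/2)$ for Lipschitz $\Gamma$ together with $\widetilde\rho_\ell^{\,-1}\notin[-1/2,1/2)$, so $(1-\widetilde\rho\,K_0^*)^{-1}$ exists as a bounded operator with no compactness needed. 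Having $Z_{\sb}\neq\varnothing$, an abstract result from \cite{CFP} then identifies $Z_{\sb}=\{\kappa\in\CO_+:-\kappa^2\in\varrho(\Delta)\cap\varrho(A_{v,\rho})\}$, and self-adjointness plus $A_{v,\rho}\le 0$ (already obtained from the form) give $Z_{\sb}=\CO_+$.

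The same regularity issue recurs in your route to (iii): extending $Q_\kappa^{-1}$ to the real axis via analytic Fredholm again needs $\mathcal{C}^{1,\alpha}$. The paper instead applies the abstract framework of \cite{Ren1} directly, verifying that $\widetilde{\mathcal R}_z-\mathcal R_z\in\mathfrak{S}_\infty(L^2(\RE^3),L^2_\beta(\RE^3))$ and the remaining hypotheses there using only mapping properties and compact Sobolev embeddings valid in the Lipschitz setting; the behaviour of $Q_\kappa^{-1}$ on $\RE$ is never invoked. Note also that the logical order is reversed relative to yours: the paper proves the LAP first and only afterwards (Lemma~\ref{res}, under the additional $\mathcal{C}^{1,\alpha}$ hypothesis) uses it to exclude real poles of $Q_\kappa^{-1}$. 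Your proposed implication---no real eigenvalues hence no real poles---would need an independent argument linking $\ker Q_\kappa$ at real $\kappa$ to eigenfunctions of $A_{v,\rho}$, and the Krein formula does not supply one since it is only established on $\CO_+$.
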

The second part of the paper is devoted to resonances. The resonance problem for acoustic operators in  discontinuous materials has
been addressed in previous works where the scattering resonances are defined
in terms of a transmission problem whose solutions are the quasi-modes of the
system, see \cite{MoSpan M3AS 19} and references therein. The common approach
relies on the microlocal analysis of the transmission problem allowing to
construct high-energy quasimodes and the asymptotic characterization the
corresponding resonances, which is achieved by the quasi-modes to resonances
result, see e.g. in \cite[Proposition 2.1]{PoVo CMP 99}. When $\Omega$ is
strictly convex with a smooth boundary, the acoustic operators
\eqref{A_full_def} fit into the scheme of such works. In particular, when
$v^{2}\rho\in\left(  0,1\right)  $ \cite[Theorem 1.1]{PoVo CMP 99} predicts
the existence of an infinite sequence of resonances $\kappa_{j}^{2}$ tending rapidly to the real axis. 
In the opposite case, $v^{2}\rho>1$, there exists a resonance-free region of the form
$\{  \kappa^{2}:|\operatorname{Re}(\kappa)|
>C_{1}\,,\ \operatorname{Im}(\kappa)\leq-C_{2}/| \kappa|\}  $, see \cite[Theorem 1.1]{PoVo AA 99}. Such results have been
extended to the case of Lipschitz star-shaped obstacles, \cite{MoSpan M3AS
19}.
\par
We elaborate a direct approach to resonances  which
relies on the resolvent difference formula \eqref{A_Krein}. The framework is provided
in Section 7, where, under the hypothesis that $\Omega$ is of class $\C^{1,\alpha}$, it is shown that $\kappa\mapsto Q_{\kappa
}^{-1}$ is an operator-valued meromorphic map with poles of finite rank 
$\kappa_{\circ}\in{\mathbb{C}%
}\backslash{\mathbb{C}}_{+}$ fulfilling the condition $\ker(Q_{\kappa_{\circ}%
})\not =\{0\}$. Then, in view of the results of \cite{JST}, where the
inverse $Q$-function determines the scattering amplitude, we are lead to identify the
resonances as those $\kappa_{\circ}^{2}$ such that $\operatorname{Im}%
\kappa_{\circ}<0$ and $\ker(Q_{\kappa_{\circ}})\not =\{0\}$. This definition turns out to be equivalent to the ones given by the blackbox formalism, see \cite{SjoZwo}, \cite[Chapter
4]{DZ}. Indeed, from (\ref{A_Krein}) there follows that
$\kappa\mapsto(  -A_{v,\rho}-\kappa^{2})  ^{-1}$, outside the poles of 
$\kappa\mapsto Q_{\kappa}^{-1}$, is analytic as a map from
$L_{comp}^{2}(  \mathbb{R}^{3})$ to $L_{loc}^{2}(\mathbb{R}^{3})  $. The link between these two equivalent definitions of resonances has been exploited in \cite{Galk} as regards $\delta$ and $\delta'$
purely singular perturbation supported on a boundary. Furthermore, by Lemma \ref{res}, the poles of $\kappa\mapsto Q_{\kappa}^{-1}$ can not lie on the real line and so our definition of the resonance set simplifies to
\be\label{res-def}
\text{$\kappa_{\circ}^{2}\in\CO$ is a resonance of $A_{v,\rho}$}\quad\Leftrightarrow\quad  \ker
(Q_{\kappa_{\circ}})\not =\{0\}\,.
\ee
A scale-dependent material design is introduced in Sections 8 where it is assumed
\begin{equation}
\Omega_{\ell}(\ve)  =y_{\ell}+\ve\Omega_{\circ}  \,,
\qquad\ve\Omega_{\circ}:=\{\varepsilon
x,\ x\in\Omega_{\circ}\}\,,\qquad\Omega_{i}(\varepsilon)\cap\Omega
_{j}(\varepsilon)=\varnothing\text{ if }i\not =j\,,\label{microres}%
\end{equation}
$\Omega_{\circ}$ is open, bounded, connected and of class $\C^{1,\alpha}$, $y_{1},\dots y_{n}$
a collection of distinct points and the material coefficients $v=v(  \varepsilon)  $ and
$\rho=\rho(\ve)$  depend on $\varepsilon\in(
0,1]  $. By dilating and translating, each $\Omega_{\ell
}(\ve)  $ is transformed into the pivot domain $\Omega_{\circ}$ and one gets a recipe for calculating the resonances involving the volume
and surface operators of $\Omega_{\circ}$  (see 
\eqref{LF}). This prepares the framework for the asymptotic regimes
considered in Section 9. Here, we consider the simpler medium containing a single
inhomogeneity $\Omega(\ve)=y+\ve\Omega$ fulfilling the scaling (\ref{microres}), while the
material parameters belong to one of the following four cases:
\vskip5pt\noindent
case 1) \be\label{caso1}
v^{2}(\ve)=\chi_{\Omega_{\+}(\ve)}+\ve^{2}\big(v^{2}+v_{1}^{2}\ve+O(\ve^{2})\big)\chi_{\Omega(\ve)}\,,\quad\rho(\ve)=\chi_{\Omega_{\+}(\ve)}+\left(1+\rho_{1}\ve  +O(\ve^{2})\right)\chi_{\Omega(\ve)}\,;
\ee
\vskip5pt\noindent
case 2) 
\be\label{caso2}
v^{2}(\ve)=\chi_{\Omega_{\+}(\ve)}+\left(1+v_{1}^{2}\ve+O(\ve^{2})\right)\chi_{\Omega(\ve)}\,,\qquad
\rho(\ve)=\chi_{\Omega_{\+}(\ve)}+\ve^{2}\big(\rho+O(\ve)\big)\chi_{\Omega(\ve)}\,;
\ee
\vskip5pt\noindent
case 3) 
\be\label{caso3}
v^{2}(\ve)=\chi_{\Omega_{\+}(\ve)}+\ve\big(v^{2}+v_{1}^{2}\ve+O(\ve^{2})\big)\chi_{\Omega(\ve)}\,,\qquad
\rho(\ve)=\chi_{\Omega_{\+}(\ve)}+\ve\big(\rho+\rho_{1}\ve+O(\ve^{2})\big)\chi_{\Omega(\ve)}\,;
\ee
\vskip5pt\noindent
case 4) 
\be\label{caso4}
v^{2}(\ve)=\chi_{\Omega_{\+}(\ve)}+\ve^{2}\big(v^{2}+v_{1}^{2}\ve+O(\ve^{2})\big)\chi_{\Omega(\ve)}\,,\quad\rho(\ve)=\chi_{\Omega_{\+}(\ve)}+\ve\big(\rho+\rho_{1}\,\ve+O(\ve^{2})\big)\chi_{\Omega(\ve)}\,.
\ee
\vskip5pt\noindent
We denote by $A(\ve)$ the $\ve$-dependent operator corresponding to  such cases. 
The results of our analysis are
resumed in the next theorem.
\begin{theorem}
\label{Part II}For $\varepsilon$ small enough, there exist resonances
$\kappa_{\pm}^{2}(\varepsilon)$ of $A(\ve)$ such that the $\CO_{-}$-valued functions 
$\varepsilon\mapsto\kappa_{\pm}(\varepsilon)=
\pm \kappa_{0}+\kappa_{\pm}^{(1)}\varepsilon+O(\ve^{2})$ are analytic, where: 
\vskip5pt\noindent
case 1) 
$$
\kappa_{0}=
\frac{v}{\lambda^{1/2}}\,,\qquad \kappa_{\pm}^{(1)}=\pm
\left(\frac{v_{1}^{2}}{2v\lambda ^{1/2}}-
\frac12\,\rho_{1} v{\lambda ^{1/2}}\,\langle\gamma_{0}^{\-}e_{\lambda },\gamma_{1}^{\-}e_{\lambda }\rangle_{L^{2}(\Gamma)}\right)
- i\, \frac{v^{2}}{8\pi\lambda ^{2}}\,\langle 1,e_{\lambda }\rangle^{2}_{L^{2}(\Omega)} 
\,,
$$
$\lambda>0$ is a simple eigenvalue of the Newton potential operator $N_{0}$ in $L^{2}(\Omega)$ and $e_{\lambda}$ is the corresponding eigenfunction; 
\vskip5pt\noindent
case 2)
$$
\kappa_{0}=\rho^{1/2}\,\omega_{M}\,,\qquad \kappa_{\pm}^{(1)}=
\pm\frac{v_{1}^{2}}{2}\,\rho^{1/2}\omega_{M}-i\,\frac{|\Omega|}{8\pi} \,\rho\,\omega_{M}^{4}\,,
$$
$\omega_{M}>0$ the ''Minnaert resonance'' defined by 
$$
\omega_{M}^{2}:=\frac1{|\Omega|}\,\langle
S_{0}^{-1}1,1\rangle_{-\frac12,\frac12}\,;
$$
case 3)  \par\noindent
$\kappa_{0}=\rho^{1/2}\,\omega_{M}$, the expression for $\kappa_{\pm}^{(1)}$ can be  computed but for the sake of brevity is not written explicitly; 
\vskip5pt\noindent
case 4)
$$
\kappa_{0}=v\,\nu^{1/2}\,,\qquad \kappa_{\pm}^{(1)}=\pm\,\frac{1}{2}\,{\nu}^{1/2}\left(  \rho \,v\,\langle
\gamma_{0}u_{\nu},\phi_{\nu}\rangle_{\frac{1}{2},-\frac{1}{2}}-v_{1}^{2}%
v^{-1}\right)  ,
$$
$\nu>0$ is a simple eigenvalue of the positive Neumann Laplacian $-\Delta_{\Omega}^{N}$ in $L^{2}(\Omega)$, $u_{\nu}$ the corresponding normalized eigenfunction and%
\[
\phi_{\nu}:=S_{0}^{-1}\gamma_{0}({\nu}^{-1}-N_{0})u_{\nu}\,.
\]
Furthermore, there exist resonances $\kappa^{2}_{\pm}(\varepsilon^{1/2})$ of $A(\ve)$,  emerging from the zero eigenvalue of $-\Delta_{\Omega}^{N}$, such that the $\CO_{-}$-valued function 
$$\varepsilon^{1/2} \mapsto\kappa_{\pm}(\varepsilon^{1/2})=
\pm\rho^{1/2}\omega_{M}(  1+\kappa_{1}
\varepsilon)  \varepsilon^{1/2}+O(  \varepsilon^{2})  
$$
is analytic; the constant $\kappa_{1}$ is explicitly computed.
\end{theorem}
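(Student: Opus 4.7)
The plan is to use the characterization \eqref{res-def} together with the rescaled form of the $Q$-function obtained in Section 8 (formula \eqref{LF}). The first step is to show that, after the pullback of $\Omega_\ell(\ve)$ to the pivot domain $\Omega_\circ$ and after inserting the $\ve$-expanded coefficients of \eqref{caso1}--\eqref{caso4}, the map $(\kappa,\ve)\mapsto Q_\kappa(\ve)$ is jointly analytic near $(\pm\kappa_0,0)$ and admits an expansion
\[
Q_\kappa(\ve)=Q^{(0)}_\kappa+\ve\,Q^{(1)}_\kappa+\ve^2\,Q^{(2)}_\kappa+\cdots
\]
with bounded operator coefficients, holomorphic in $\kappa$. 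This follows from the entirety of the Helmholtz kernel $e^{i\mu|x-y|}/(4\pi|x-y|)$ in $\mu=\ve\kappa$, whose Taylor coefficients give explicit expansions of the rescaled versions of $N_{\ve\kappa}$ and $SL_{\ve\kappa}$ on $\Omega_\circ$; in particular the constant imaginary contribution $i\mu/(4\pi)$ produces a rank-one imaginary operator that will eventually generate the imaginary part of $\kappa_\pm^{(1)}$.

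In each case I identify $\kappa_0$ as the smallest positive value at which $Q^{(0)}_{\kappa_0}$ develops a one-dimensional kernel. In case 1 the scaling reduces the leading $(1,1)$-block to $v^2-\kappa^2 N_0$ on $L^2(\Omega_\circ)$, so $\ker(Q^{(0)}_{\kappa_0})\neq\{0\}$ is equivalent to the Newton-potential eigenvalue equation $N_0 e_\lambda=\lambda e_\lambda$ with $\kappa_0^2=v^2/\lambda$. In case 2 the leading $(2,2)$-block becomes a boundary operator whose kernel condition, after expansion of $\gamma_1 SL_0$ and use of the identity $\omega_M^2=|\Omega|^{-1}\langle S_0^{-1}1,1\rangle$, simplifies to $\kappa_0^2=\rho\omega_M^2$. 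Case 3 is dominated by the same Minnaert balance. In case 4 the leading $(1,1)$-block annihilates a Neumann eigenfunction $u_\nu$, giving $\kappa_0^2=v^2\nu$, and the coupling to the $(2,1)$-block pins down the companion boundary density $\phi_\nu=S_0^{-1}\gamma_0(\nu^{-1}-N_0)u_\nu$.

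Existence and analyticity of the branch $\kappa_\pm(\ve)$ then follow from a Gohberg--Sigal / analytic-Fredholm reduction: near $\pm\kappa_0$ the kernel condition for $Q_\kappa(\ve)$ is equivalent to the vanishing of a scalar analytic function of $(\kappa,\ve)$ whose $\kappa$-derivative at $(\pm\kappa_0,0)$ is non-zero, since $\ker(Q^{(0)}_{\kappa_0})$ is simple; the analytic implicit function theorem then produces $\kappa_\pm(\ve)=\pm\kappa_0+\kappa^{(1)}_\pm\ve+O(\ve^2)$. The coefficient $\kappa^{(1)}_\pm$ is extracted by a Lyapunov--Schmidt identity: projecting the order-$\ve$ equation onto the cokernel of $Q^{(0)}_{\kappa_0}$ gives
\[
\kappa^{(1)}_\pm=-\frac{\langle Q^{(1)}_{\kappa_0}\phi,\phi^\ast\rangle}{\langle\partial_\kappa Q^{(0)}_{\kappa_0}\phi,\phi^\ast\rangle},
\]
with $\phi,\phi^\ast$ spanning the kernel and cokernel of $Q^{(0)}_{\kappa_0}$. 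Substituting the explicit eigenvectors and Green-kernel Taylor coefficients produces the displayed formulas.

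The hard part is twofold. First, the $\ve$-bookkeeping differs substantially across the four cases: in case 3 both diagonal blocks of $Q^{(0)}_{\kappa_0}$ degenerate simultaneously, and in case 4 the coupling between volume and boundary sectors must be carefully inverted at leading order to close the system on $L^2(\Omega_\circ)$. Second, the imaginary contributions $-iv^2\langle 1,e_\lambda\rangle^2/(8\pi\lambda^2)$ and $-i|\Omega|\rho\omega_M^4/(8\pi)$ arise only from the purely imaginary pieces of the Green-kernel expansion and are invisible at any order if one works with the real Taylor polynomial alone; tracking them correctly through the Lyapunov--Schmidt formula is the delicate technical point. Finally, the branch of case 4 emerging from the zero Neumann eigenvalue requires the Puiseux change of variable $\ve\mapsto\ve^{1/2}$, because the constant eigenfunction of $-\Delta_\Omega^N$ makes the leading balance degenerate at $\ve=0$; after the substitution the same scheme yields an analytic expansion in $\ve^{1/2}$ whose leading coefficient is of Minnaert type.
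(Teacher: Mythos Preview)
Your outline follows the Gohberg--Sigal route, which the paper explicitly sets aside. In Section~9 the authors instead apply an analytic implicit function theorem directly to a map of the form
\[
\Phi_j(\ve,\kappa,u\oplus\phi)=\big(\langle \xi_0,\xi-\xi_0\rangle,\,F_j(\ve,\kappa,u\oplus\phi)\big),
\]
where $F_j$ is the rescaled $Q$-function acting on $u\oplus\phi$ and the first component is a normalization pinning down the eigenvector. They then verify by hand that the Fr\'echet differential $D_\pm\Phi_j$ at $(0,\pm\kappa_0,\xi_0)$ is a bijection on $\CO\times\widehat X$; this is done case by case (equations \eqref{uni1.1}, \eqref{uni2}, \eqref{uni3}, \eqref{uni4}) and is the substantive content of the proof. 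Your reduction to a scalar characteristic function and a Lyapunov--Schmidt quotient would also work, and yields the same first-order formula, but the advantage the paper claims for its route is a recursive algorithm for \emph{all} higher-order coefficients without invoking the generalized Rouch\'e theorem.

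There is a genuine gap in your sketch: you assert that the $\kappa$-derivative of the scalar characteristic function is nonzero at $(\pm\kappa_0,0)$ ``since $\ker(Q^{(0)}_{\kappa_0})$ is simple''. Simplicity of the kernel does \emph{not} imply this; what is needed is the transversality condition $\langle\partial_\kappa Q^{(0)}_{\kappa_0}\phi,\phi^\ast\rangle\neq 0$, which is exactly the content of the paper's explicit invertibility checks for $D_\pm\Phi_j$. In Case~4 this verification is particularly nontrivial (it occupies a full lemma and relies on identities like \eqref{un} and \eqref{DN1}--\eqref{DN2}), and in Case~3 it reduces to computing a $2\times2$ determinant that must be shown nonzero. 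Your plan does not indicate how these would be established.

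Two smaller points. In Case~4 the Neumann eigenvalue equation does not come from the $(1,1)$-block alone: the paper shows (Lemma in Section~9.4) that it emerges only after combining both rows of the coupled leading-order system and using the jump relation $[\gamma_1]\SL_0\phi=-\phi$. And for the zero-eigenvalue branch, the paper does not simply substitute $\ve\mapsto\ve^{1/2}$: it first performs a Lyapunov--Schmidt reduction (because $D_0\Phi_4$ has a one-dimensional kernel in $\kappa$) to obtain a scalar bifurcation function $f(\ve,\kappa)$, then introduces $g(t,z)=t^{-2}f(t^2,tz)$ and applies the scalar implicit function theorem to $g$. Your description of this as a Puiseux change is morally right but skips the reduction step, without which the implicit function theorem does not apply.
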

\par
Some of the acoustic resonances detected in Theorem \ref{Part II} have
counterparts in the corresponding scattering problems at a fixed frequency. We refer to
those localization phenomena of scattering of waves with a fixed frequency close to small inhomogeneities which are observed in different systems where the
material parameters at the inhomogeneity sites are highly contrasted with
respect to the background, see \cite{DeLiu}. Such behaviors, roughly
corresponding to quasi-modes, occur at specific incoming frequencies referred
to as 'resonances' of these micro-resonators. In acoustics, specific asymptotic
assumptions can excite localized quasi-modes with support concentrated on the
volume or on the surface of the resonator. The stationary acoustic wave
equation in the regime (\ref{caso1}) has been considered, among others, in
\cite{DaGhaSi IPI 21} (see also \cite{Amm et al dielctric} for an equivalent
dielectric problem), while the regime (\ref{caso2}) was considered in \cite{Amm
et al Minnaert}, \cite{DaGhaSi IPI 21} (with applications to the acoustic
imaging) and \cite{Minn} (where the localization effect was
interpreted in terms of the behavior of a frequency-dependent resolvent). 
In the first case, volume
quasi-modes are observed at frequencies $\lambda^{-1/2}$ corresponding to the
eigenvalues $\lambda$ of the Newton potential operator $N_{0}$. In the
second case, a surface quasi-mode is detected at the Minnaert frequency
$\omega_{M}$. The results for cases 1 and 2 in Theorem \ref{Part II}
confirm the common wisdom that localization phenomena in
micro-resonator systems correspond to resonances of the
generator of the dynamics: at the best of our knowledge, Theorem \ref{Part II}
provides a first proof of this correspondence.
\par
Several works on the stationary analysis of micro-resonator quasi-modes follow
the Gohberg-Sigal scheme based on a generalized Rouch\'{e} theorem for
meromorphic operator-valued functions. In Section 9 we adopt a different
strategy: after recasting the resonance problem in terms of a suitable
functional equation, the localization of resonances rests on implicit function
arguments and, in the case \eqref{caso4}, to detect the resonances emerging from $0\in\RE$, on a further Lyapunov-Schmidt reduction. This approach presents some novelties. At first,  it provides  solutions which are analytic functions with respect to the small parameter $\varepsilon$ (or $\sqrt{\varepsilon}$ in
the \eqref{caso4} case with zero energy); moreover, it provides a recursive algorithm for computing
the coefficients of the analytic expansion of the solution at any order. It is worth
noticing that although the computation of the higher-order corrections may be a difficult task, such a difficulty is only due to the length of the computations and no technical difficulty beyond the ones already overcame at the first-order level appears.
\par 
Our results, in cases \eqref{caso1} and \eqref{caso4}, require a simple spectrum for the Newton operator $N_{0}$ and for the Neumann Laplacian $\Delta_{\Omega}^{N}$ respectively. It is known that this is a generically true hypothesis as regards $\Delta_{\Omega}^{N}$ and we conjecture that the same is true as regards $N_{0}$. We expect that the same kind of results as in Theorem \ref{Part II} hold even without such an assumption on the spectra, by using a Lyapunov-Schmidt reduction and proceeding along the lines of our proof of case 4 for the zero energy-case (also see the approach to the branching of eigenvalues and eigenvectors of Fredholm operators in \cite[Section 32]{VT}). 
\par
A specific feature of micro-resonator regimes is the emergency of finite-energy
resonances asymptotically close to the continuous spectrum. It is worth
noticing that this does not exclude the existence of other resonances. In
particular, as recalled above, \cite{PoVo CMP 99} predicts the existence of a
family of acoustic resonances $\kappa_{j}^{2}$ such that 
$\operatorname{Re}(\kappa_{j}^{2})\to+\infty$ and $|\text{Im}(\kappa_{j})|
\le C_{N}|\kappa_{j}|^{-N}$ for any $N\gg 1$.  Such result applies to each of the cases
considered in Theorem \ref{Part II}. However, due to the scaling, the
energy-range of such resonances shifts at $+\infty$ as $\ve\to 0$. Due to this fact, we
argue that only the micro-resonator finite-energy resonances are expected to
have relevant contributions to the dynamics in the asymptotic limit.
\par
The spectral and scattering theory for operators of the kind $A_{v,1}$
were considered in \cite[Sections II.5 and III.6]{Yafa}, while a corresponding
micro-resonator setting with $v^{2}(\ve)=  \chi_{\mathbb{R}^{3}%
\backslash\Omega(\ve)  }+\varepsilon^{2}\chi_{\Omega(
\varepsilon)  }$ and $\rho=1$ was investigated in \cite{ZAMP} (there, no hypothesis on the simplicity of the spectrum of $N_{0}$ was assumed).  
The configurations \eqref{caso1} and \eqref{caso2} generalize the
ones considered in several previous works on acoustic quasi-modes, while we
underline that neither resonances nor scattering-resonances in the settings
\eqref{caso3} and \eqref{caso4} have been considered before. In particular, the
emergence of a system of resonances converging to $\sigma(
-\Delta_{\Omega}^{N})  $ when $v(\ve)  $ and
$\rho(\ve)  $ converge to zero with the same rate was not
observed so far. We are not aware of other contributions to 
resonance problems in micro-resonator settings beside the preprint
\cite{LiSi} dealing with a simplified
version of the Minnaert regime (\ref{caso2}); comparing \eqref{acQ} with the operator in \cite[Section 5]{LiSi}, the scattering resonances defined there coincide with the resonances defined by \eqref{res-def}.
\section{\label{Sec_Notation}Notation and definitions}{\ }
Here we introduce some notation and definitions, referring to \cite{McLe} as regards Sobolev spaces, trace maps and layer operators.
\vskip7pt
\noindent$\bullet$ $R_{\kappa}\equiv(-\Delta-\kappa^{2})^{-1}$ denotes the free resolvent with kernel
$\frac{e^{i\kappa|x-y|}}{4\pi|x-y|}$.
\vskip7pt
\noindent$\bullet$ $\Omega\equiv\Omega_{\-}=\Omega_{1}\cup\dots\cup\Omega_{n}$, the $\Omega_{\ell}$'s are connected disjoint open bounded Lipschitz domains and  $\Omega_{\+}:=\RE^{3}\backslash\overline\Omega$. 
The boundary of $\Omega$ is denoted by $\Gamma$ and $\Gamma_{\ell}$ denotes the boundary of $\Omega_{\ell}$.
\vskip7pt
\noindent$\bullet$ $\gamma_{0}^{\mathrm{in/ex}}$ and $\gamma_{1}^{\mathrm{in/ex}}$ denotes the lateral Dirichlet and Neumann trace maps at the boundary of $\Omega_{\-}$ and $\Omega_{\+}$respectively; $\gamma_{0}:=\frac{1}{2}(  \gamma_{0}^{\+}+\gamma_{0}^{\-})$,  $\gamma_{1}:=\frac{1}{2}(  \gamma_{1}^{\+
}+\gamma_{1}^{\-})$ denote the respective global traces.  
\vskip7pt
\noindent$\bullet$ $[  \gamma_{0}]  :=\gamma_{0}^{\+}-\gamma_{0}^{\-}$ and $[  \gamma_{1}]  :=\gamma_{1}^{\+}-\gamma_{1}^{\-}$ denote the jump at the boundary of the Dirichlet and Neumann trace maps.
\vskip7pt
\noindent$\bullet$ $\SL_{\kappa}$ and $\DL_{\kappa}$ denote the single- and double-layer operators.
\vskip7pt
\noindent$\bullet$
$S_{0}:=\gamma_{0}\SL_{0}$, $K_{0}:=\gamma_{0}\DL_{0}$, $K_{0}^{*}=\gamma_{1}\SL_{0}$.
\vskip7pt
\noindent$\bullet$
$\DN_{0}=(\frac12+\gamma_{1}\SL_{0})S_{0}^{-1}$ denotes the Dirichlet-to-Neumann operator.
\vskip7pt
\noindent$\bullet$ $1_{\Omega}:L^{2}(\mathbb{R}^{3})\rightarrow L^{2}(\Omega)$
denotes the restriction to $\Omega$; its adjoint $1_{\Omega}^{\ast}%
:L^{2}(\Omega)\rightarrow L^{2}(\mathbb{R}^{3})$ represents the extension by zero.
\vskip7pt
\noindent$\bullet$ $N_{\kappa}:=1_{\Omega}R_{k}1_{\Omega}^{\ast}$; in particular, $N_{0}:=1_{\Omega}R_{0}1_{\Omega}^{\ast}$ is the Newton potential operator in $L^{2}(\Omega)$.
\vskip7pt
\noindent$\bullet$ $\Delta_{\mathcal O}$ denotes the distributional Laplacian on the open set $\mathcal O$; whenever $\mathcal O=\RE^{3}$ we use the shorthand notation $\Delta$.
\vskip7pt
\noindent$\bullet$ $H^{s}(\mathcal O)$ denotes the scale of Sobolev spaces on the open set $\mathcal O$.
\vskip7pt
\noindent$\bullet$ $H^{s}(\Gamma)$, $|s|\le 1$, denotes the scale of Sobolev spaces on the boundary $\Gamma$ of the Lipschitz domain $\Omega$. 
\vskip7pt
\noindent$\bullet$ $\langle\cdot,\cdot\rangle_{-s,s}$ denotes the dual pairing, induced from the $L^{2}(\Gamma)$-scalar product, anti-linear with respect to the first variable, between $H^{-s}(\Gamma)$ and $H^{s}(\Gamma)$.
\vskip7pt
\noindent$\bullet$ $H_{\Delta}^{s}(\mathcal O):=\{u\in H^{s}(\mathcal O):\Delta_{\mathcal O}u\in L^{2}(\mathcal O)\}$; $L_{\Delta}^{2}(\mathcal O):=H_{\Delta}^{0}(\mathcal O)$.
\vskip7pt
\noindent$\bullet$ $H_{\Delta}^{s}(  \mathbb{R}^{3}\backslash\Gamma):=
H_{\Delta}^{s}(  \Omega)  \oplus H_{\Delta}^{s}(
\mathbb{R}^{3}\backslash\overline{\Omega})$; $L_{\Delta}^{2}(  \mathbb{R}^{3}\backslash\Gamma):=H_{\Delta}^{0}(  \mathbb{R}^{3}\backslash\Gamma)$.
\vskip7pt
\noindent$\bullet$ $\Delta^{\min}_{\mathcal O}:=\overline{\Delta_{\mathcal O}|{\mathcal C}^{\infty}_{comp}(\mathcal O)}$ denotes the minimal Laplacian in $L^{2}(\mathcal O)$.
\vskip7pt
\noindent$\bullet$ $\Delta^{\max}_{\mathcal O}:=\Delta_{\mathcal O}|L_{\Delta}^{2}(\mathcal O)$ denotes the maximal Laplacian in $L^{2}(\mathcal O)$.
\vskip7pt
\noindent$\bullet$ Given two Banach spaces $(X_{1},\|\cdot\|_{1})$ and $(X_{2},\|\cdot\|_{1})$, $X_{1}\cong X_{2}$ means that $X_{1}=X_{2}$  and that there exists $c_{1}>0$ and $c_{2}>0$ such that $c_{1}\|\cdot\|_{1}\le \|\cdot\|_{2}\le c_{2}\|\cdot\|_{1}$.
\vskip7pt
\noindent$\bullet$ $\B(X,Y)$ denotes the Banach space of bounded linear operators on the Banach space $X$ to the Banach space $Y$; $\B(X)\equiv\B(X,X)$. $\|B\|_{X,Y}$ denotes the operator norm of $B\in \B(X,Y)$; when no confusion arises, we use the abbreviated notation $\|B\|$. 
\vskip7pt
\noindent$\bullet$ ${\mathfrak{S}}_{\infty }(X,Y)\subseteq \B(X,Y)$ denotes the set of compact operators.
\vskip7pt
\noindent $\bullet$ $\varrho(A)$ and $\sigma(A)=\CO\backslash\varrho(A)$ denote the resolvent set and the spectrum of the self-adjoint operator $A$; $\sigma_{p}(A)$, $\sigma_{pp}(A)$,  $\sigma_{ac}(A)$,  $\sigma_{sc}(A)$, $\sigma_{ess}(A)$, $\sigma_{discr}(A)$, denote the point, pure point, absolutely continuous, singular continuous, essential and discrete spectra.
\vskip7pt
\noindent$\bullet$ $\chi_{S}$ denotes the characteristic function of a set $S$.
\vskip7pt
\noindent$\bullet$ $a\lesssim b$  means $a\le c\,b$ for some constant $c$.
\vskip7pt
\noindent$\bullet$ Given a map $0<\ve\mapsto u(\ve)\in X$, $X$ a Banach space, $u(\ve)=O(\ve^{\lambda})$ means $\|u(\ve)\|_{X}\lesssim \ve^{\lambda}$.
\vskip7pt
\noindent$\bullet$ $\CO_{\pm}:=\{z\in\CO:\pm\text{\rm Im}(z)>0\}$.
\section{The domain of self-adjointness} 
Let $\Omega_{1},\dots,\Omega_{n}$ be open, connected and bounded Lipschitz domains in $\RE^{3}$ as in the introduction. Notice that 
\be\label{os1}
H^{s}(\Omega)=H^{s}(\Omega_{1})\oplus\dots\oplus H^{s}(\Omega_{n})\,,
\ee
\be\label{os2}
H_{\Delta}^{s}(\Omega)=H_{\Delta}^{s}(\Omega_{1})\oplus\dots\oplus H_{\Delta}^{s}(\Omega_{n})\,,
\ee
and
\be\label{os3}
H^{s}(\Gamma)=H^{s}(\Gamma_{1})\oplus\dots\oplus H^{s}(\Gamma_{n})\,.
\ee
Furthermore,
$$
\gamma^{\-/\+}_{k}u=\gamma^{\-/\+}_{k,1}u\oplus\dots\oplus\gamma^{\-/\+}_{k,n}u\,,\quad k=0,1\,,
$$
where $\gamma_{k}^{\mathrm{in/ex}}$ and $\gamma_{k,\ell}^{\mathrm{in/ex}}$ denotes the lateral (Dirichlet if $k=0$ and Neumann if $k=1$) trace maps at the boundary of $\Omega$ and $\Omega_{\ell}$ respectively.\par
Given the strictly positive constants $\rho_{1},\dots,\rho_{n}$ and $v_{1},\dots, v_{n}$, we define the   piecewise-constant functions $v$ and $\rho$ as in \eqref{mat}.\par
Our first aim is to provide the self-adjoint realization of the acoustic operator  $A_{v,\rho}$ in \eqref{A_full_def}
with the transmission boundary conditions across $\Gamma_{\ell}$, $\ell=1,\dots,n$, given in \eqref{bc}.
\par In the following, we use the notation $L^{2}(\RE^{3},{b}^{-1})$ to denote the Hilbert space of square integrable functions with scalar product 
$$
\langle f,g\rangle_{L^{2}(\RE^{3}\!,\,{b}^{-1})}:=\int_{\RE^{3}}\overline f(x)g(x)\,b(x)^{-1}dx\,,\qquad b:=v^{2}\!\rho=\chi_{\Omega_{\+}}+\sum_{\ell=1}^{n} v^{2}_{\ell}\rho_{\ell}\,\chi_{\Omega_{\ell}}\,.
$$
Notice that $L^{2}(\RE^{3},{b}^{-1})\cong L^{2}(\RE^{3})$ as Banach spaces.
\begin{theorem}\label{acoustic} The acoustic operator $A_{v,\rho }$ 
with domain 
\begin{align*}
{\mathscr D}_{\rho}
:=&\big\{  u\in
H_{\Delta}^{1}(  \mathbb{R}^{3}\backslash\Gamma)  \,:\,\gamma^{\+}_{0}u=\gamma^{\-}_{0}u\,,\ \rho_{\ell}\,\gamma_{1,\ell}^{\mathrm{ex}}u=\gamma_{1,\ell}^{\mathrm{in}}u\,,\ \ell=1,\dots,n\big\}  \\
\equiv&\big\{  u\in H^{1}(\RE^{3})\cap
L_{\Delta}^{2}(  \mathbb{R}^{3}\backslash\Gamma)  \,:\, \rho_{\ell}\,\gamma_{1,\ell}^{\mathrm{ex}}u=\gamma_{1,\ell}^{\mathrm{in}}u\,,\ \ell=1,\dots,n\big\}
\end{align*}
is a nonpositive self-adjoint operator in
$L^{2}(  \mathbb{R}^{3},{b}^{-1})  $. 
\end{theorem}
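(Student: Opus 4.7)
The strategy is to obtain $A_{v,\rho}$ as (minus) the self-adjoint operator associated, in the weighted Hilbert space $L^{2}(\RE^{3},b^{-1})$, to the sesquilinear form
\[
Q(u,w):=\int_{\RE^{3}}\rho^{-1}\,\nabla u\cdot\overline{\nabla w}\,dx,\qquad\dom(Q)=H^{1}(\RE^{3}).
\]
Since $\rho$ and $b$ are piecewise constant and uniformly bounded above and below by strictly positive constants, the weighted $L^{2}(\RE^{3},b^{-1})$-norm is equivalent to the standard $L^{2}$-norm and the form-norm $(Q(u,u)+\|u\|^{2}_{L^{2}(\RE^{3},b^{-1})})^{1/2}$ is equivalent to the $H^{1}(\RE^{3})$-norm; hence $Q$ is densely defined, symmetric, nonnegative and closed in $L^{2}(\RE^{3},b^{-1})$. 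Kato's first representation theorem then supplies a unique nonnegative self-adjoint operator $T$ in $L^{2}(\RE^{3},b^{-1})$ with $\langle Tu,w\rangle_{L^{2}(\RE^{3},b^{-1})}=Q(u,w)$ for all $u\in\dom(T)$ and $w\in H^{1}(\RE^{3})$. The remaining task is to identify $T=-A_{v,\rho}$ on $\dom(T)={\mathscr D}_{\rho}$.

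For the inclusion ${\mathscr D}_{\rho}\subseteq\dom(T)$, take $u\in{\mathscr D}_{\rho}$ and any $w\in H^{1}(\RE^{3})$; split $Q(u,w)$ into integrals over $\Omega_{\+}$ and over each $\Omega_{\ell}$, apply Green's identity on every component (legitimate since $u$ lies in $H_{\Delta}^{1}$ there), and use the continuity $\gamma_{0}^{\+}w=\gamma_{0}^{\-}w$ enjoyed by $w$ as an element of $H^{1}(\RE^{3})$. The outcome is
\[
Q(u,w)=-\int_{\RE^{3}\backslash\Gamma}\rho^{-1}\Delta u\,\overline{w}\,dx+\sum_{\ell=1}^{n}\int_{\Gamma_{\ell}}\bigl(\rho_{\ell}^{-1}\gamma_{1,\ell}^{\-}u-\gamma_{1,\ell}^{\+}u\bigr)\,\overline{\gamma_{0}w}\,d\sigma.
\]
The Neumann transmission condition $\rho_{\ell}\gamma_{1,\ell}^{\+}u=\gamma_{1,\ell}^{\-}u$ built into ${\mathscr D}_{\rho}$ kills every boundary term, while the volume term, via $\rho^{-1}=b^{-1}v^{2}$ and $A_{v,\rho}u=v^{2}\Delta u$ piecewise, equals $\langle -A_{v,\rho}u,w\rangle_{L^{2}(\RE^{3},b^{-1})}$. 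Therefore $u\in\dom(T)$ with $Tu=-A_{v,\rho}u\in L^{2}(\RE^{3},b^{-1})$.

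For the reverse inclusion $\dom(T)\subseteq{\mathscr D}_{\rho}$, any $u\in\dom(T)$ already lies in $H^{1}(\RE^{3})$, so the Dirichlet transmission $\gamma_{0}^{\+}u=\gamma_{0}^{\-}u$ is automatic. Testing $Q(u,w)=\langle Tu,w\rangle$ against $w\in\C^{\infty}_{\comp}(\Omega_{\+})$ and, separately, $w\in\C^{\infty}_{\comp}(\Omega_{\ell})$ shows that $\Delta u\in L^{2}$ on each connected component of $\RE^{3}\backslash\Gamma$, so $u\in L^{2}_{\Delta}(\RE^{3}\backslash\Gamma)$ and $Tu=-v^{2}\Delta u=-A_{v,\rho}u$ distributionally on $\RE^{3}\backslash\Gamma$. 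Inserting this identity back into $Q(u,w)=\langle Tu,w\rangle$ for an arbitrary $w\in H^{1}(\RE^{3})$ leaves precisely the boundary sum displayed above, which must therefore vanish; by surjectivity of $\gamma_{0}:H^{1}(\RE^{3})\to H^{1/2}(\Gamma)$ this forces $\rho_{\ell}^{-1}\gamma_{1,\ell}^{\-}u=\gamma_{1,\ell}^{\+}u$ on each $\Gamma_{\ell}$, i.e.\ $u\in{\mathscr D}_{\rho}$. The nonpositivity of $A_{v,\rho}$ is immediate from $T\ge 0$.

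The only delicate ingredient is the sign/orientation bookkeeping in Green's identity: the integrations by parts over $\Omega_{\-}$ and over $\Omega_{\+}$ involve opposite outward normals at $\Gamma$, and the two lateral Neumann traces $\gamma_{1,\ell}^{\-}$ and $\gamma_{1,\ell}^{\+}$ must enter with the correct relative sign to reproduce exactly the prescribed transmission condition $\rho_{\ell}\gamma_{1,\ell}^{\+}u=\gamma_{1,\ell}^{\-}u$; apart from this careful bookkeeping, the argument is standard.
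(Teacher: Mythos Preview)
Your proof is correct and follows essentially the same approach as the paper: both define the sesquilinear form $Q(u,w)=\int_{\RE^{3}}\rho^{-1}\nabla u\cdot\overline{\nabla w}$ on $H^{1}(\RE^{3})$, invoke the representation theorem to obtain a nonnegative self-adjoint operator, and then identify its domain with $\mathscr{D}_{\rho}$ via Green's identity and the surjectivity of $\gamma_{0}:H^{1}(\RE^{3})\to H^{1/2}(\Gamma)$. The only minor difference is in how the regularity $\dom(T)\subseteq L^{2}_{\Delta}(\RE^{3}\backslash\Gamma)$ is obtained: you test directly against $w\in\C^{\infty}_{\comp}$ of each component, whereas the paper argues via the operator inclusions $\Delta^{\min}_{\Omega_{\+}}\oplus v^{2}\Delta^{\min}_{\Omega}\subseteq A_{v,\rho}$ and their adjoints to reach $\dom(A)\subseteq L^{2}_{\Delta}(\RE^{3}\backslash\Gamma)$; both routes are standard and equivalent.
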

\begin{proof} We introduce the sesquilinear form 
$$
{\mathscr Q}:H^{1}(  \mathbb{R}^{3})  \times
H^{1}(  \mathbb{R}^{3})
\subseteq L^{2}(  \mathbb{R}^{3},{b}^{-1}) \times L^{2}(  \mathbb{R}^{3},{b}^{-1}) \to\CO \,,\quad
{\mathscr Q}(  u_{1},u_{2}):=\langle \nabla u_{1},{\rho}^{-1}\nabla u_{2}\rangle _{L^{2}(\mathbb{R}^{3})  }\,.
$$
Since
\be\label{sesqui}
\|\nabla
u\|_{L^{2}(  \mathbb{R}^{3})  }^{2}\lesssim {\mathscr Q}(  u,u)\lesssim \|\nabla
u\|_{L^{2}(  \mathbb{R}^{3})  }^{2}\,,
\ee
the quadratic form $u\mapsto {\mathscr Q}(  u,u)  $ is closed and
nonnegative. The corresponding nonnegative self-adjoint operator $A$ in 
$L^{2}(\mathbb{R}^{3},{b}^{-1})  $ is  
defined by
$$
\begin{cases}
\text{dom}(  A)  :=\big\{  u_{1}\in H^{1}(\mathbb{R}^{3})  :\exists\,w\in L^{2}(  \mathbb{R}^{3})  \text{ s.t. }{\mathscr Q}(  u_{1},u_{2})  =\langle w,u_{2}\rangle _{L^{2}(\mathbb{R}^{3}\!,\,{b}^{-1})  }\ \forall\,u_{2}\in H^{1}(\mathbb{R}^{3})  \big\}, 
\\
Au_{1}:=w\,.
\end{cases}
$$
By the first Green identity (see, e.g., \cite[Theorem 4.4]{McLe}) one has, for any $u_{1}\in\text{dom}(  A_{v,\rho   }) $ and $u_{2}\in
H^{1}(  \mathbb{R}^{3})$,
\begin{align*}
&\langle  -A_{v,\rho }u_{1},u_{2}\rangle_{L^{2}(\mathbb{R}^{3}\!,\,{b}^{-1})  }\\
=&\langle   -\Delta^{\max}_{\Omega_{\+}}1_{\Omega_{\+}}\,u_{1},1_{\Omega_{\+}}\,u_{2}\rangle_{L^{2}(\Omega_{\+})  }+\sum_{\ell=1}^{n}\rho_{\ell}^{-1}
\langle  -\Delta^{\max}_{\Omega_{\ell} } 1_{\Omega_{\ell} }u_{1}, 1_{\Omega_{\ell} }u_{2}\rangle_{L^{2}(\Omega_{\ell} )  }
\\
=&\langle  \nabla1_{\Omega_{\+}}\,u_{1}, \nabla1_{\Omega_{\+}}\,u_{2}\rangle_{L^{2}(\Omega_{\+})  }+\sum_{\ell=1}^{n}\rho_{\ell}^{-1}
\langle  \nabla 1_{\Omega_{\ell} }u_{1}, \nabla 1_{\Omega_{\ell} }u_{2}\rangle_{L^{2}(\Omega )  }
\\
+&\langle 
\gamma_{1}^{\+}u_{1},\gamma_{0}^{\+}u_{2}\rangle_{-\frac12,\frac12}
-\sum_{\ell=1}^{n}\rho_{\ell}^{-1}\langle  \gamma_{1,\ell}^{\-}u_{1},\gamma_{0,\ell}^{\-}u_{2}\rangle_{-\frac12,\frac12}
\\
=&{\mathscr Q}(  u_{1},u_{2})  +\sum_{\ell=1}^{n}\langle 
(\gamma_{1,\ell}^{\+}-\rho_{\ell}^{-1} \gamma_{1,\ell}^{\-})u_{1},\gamma_{0,\ell}u_{2}\rangle
_{-\frac12,\frac12}
\\
=&{\mathscr Q}(  u_{1},u_{2})
\end{align*}
and so $-A_{v,\rho }\subseteq A$. Now, we prove the reverse inclusion. 
By
$$
\Delta_{\Omega_{\+}}^{\mathrm{min}}\oplus v_{1}^{2}\Delta_{\Omega_{1} }^{\mathrm{min}}\oplus \dots \oplus v_{n}^{2}\Delta_{\Omega_{n} }^{\mathrm{min}}  
\subseteq A_{v,\rho }
\,,
$$
one gets\footnote{
Since the function ${b}$ is constant on any $\Omega_{\ell}$ and on  
$\Omega_{\+}$, the adjoint in $L^{2}(\mathbb{R}^{3},{b}^{-1})$ of the direct sum $\Delta_{\Omega_{\+}}^{\mathrm{min}}\oplus v_{1}^{2}\Delta_{\Omega_{1} }^{\mathrm{min}} \oplus \dots \oplus v_{n}^{2}\Delta_{\Omega_{n} }^{\mathrm{min}}$ coincides with the one in $L^{2}(\mathbb{R}^{3})$.
} 
\begin{align}\label{op-incl}
A=A^{*}\subseteq -A^{*}_{v,\rho   }\subseteq&(-\Delta_{\Omega_{\+}}^{\mathrm{min}})^{*}\oplus(-v_{1}^{2}(\Delta_{\Omega_{1} }^{\mathrm{min}})^{*})\oplus\dots\oplus(-v_{n}^{2}(\Delta_{\Omega_{n} }^{\mathrm{min}})^{*})\\
=&(-\Delta_{\Omega_{\+}}^{\mathrm{max}})\oplus(-v_{1}^{2}\Delta_{\Omega_{1} }^{\mathrm{max}})\oplus\dots\oplus(-v_{n}^{2}\Delta_{\Omega_{n} }^{\mathrm{max}})
\end{align}
and so 
\begin{equation}
\text{dom}(  A)  \subseteq H^{1}(  \mathbb{R}^{3})  \cap
L_{\Delta}^{2}(  \mathbb{R}^{3}\backslash\Gamma)  
\label{dom_A_incl}%
\end{equation}
and 
$$
A\subseteq -A_{v,\rho }\quad\Leftrightarrow\quad \dom(A)\subseteq 
{\mathscr D}_{\rho}\,.
$$
The inclusion \eqref{dom_A_incl} allows to use, for any $u_{1}\in \dom(  A) $ and $u_{2}\in H^{1}(  \mathbb{R}^{3}) $, the first Green  identity. Thus, by the inclusion \eqref{op-incl},
\begin{align*}
&0=\langle  Au_{1},u_{2}\rangle  _{L^{2}(  \mathbb{R}^{3}\!,\,{b}^{-1})  }-{\mathscr Q}(u_{1},u_{2})\\
=&\langle -\Delta_{\Omega_{\+}}^{\mathrm{max}}1_{\Omega_{\+}}u_{1},1_{\Omega_{\+}}u_{2}\rangle
_{L^{2}(\Omega_{\+})}+\sum_{\ell=1}^{n}\rho_{\ell}^{-1}\langle  -\Delta_{\Omega_{\ell} }^{\mathrm{max}}1_{\Omega_{\ell} }u_{1},1_{\Omega_{\ell} }u_{2}\rangle  _{L^{2}(\Omega_{\ell} )}\\
-&
\langle  \nabla1_{\Omega_{\+}}\,u_{1}, \nabla1_{\Omega_{\+}}\,u_{2}\rangle_{L^{2}(\Omega_{\+})  }-\sum_{\ell=1}^{n}\rho_{\ell}^{-1}
\langle  \nabla 1_{\Omega_{\ell} }u_{1}, \nabla 1_{\Omega_{\ell} } u_{2}\rangle_{L^{2}(\Omega_{\ell} )  }
\\
=&-\langle 
\gamma_{1}^{\+}u_{1},\gamma_{0}^{\+}u_{2}\rangle_{-\frac12,\frac12}+\sum_{\ell=1}^{n}\rho_{\ell}^{-1}\langle  \gamma_{1,\ell}^{\-}u_{1},\gamma_{0,\ell}^{\-}u_{2}\rangle_{-\frac12,\frac12} \,.
\end{align*}
By $u_{2}\in
H^{1}(  \mathbb{R}^{3})  $, one has $\gamma_{0,\ell}^{\-}u_{2}=\gamma^{\+}_{0,\ell}u_{2}=\gamma_{0}u_{2}|\Gamma_{\ell}
$  and so, whenever $u_{1}\in\dom(A)$,
\begin{align*}
0=&\sum_{\ell=1}^{n}\rho_{\ell}^{-1}\langle  \gamma_{1,\ell}^{\-}u_{1},\gamma_{0,\ell}^{\-}u_{2}\rangle_{-\frac12,\frac12} -\langle 
\gamma_{1}^{\+}u_{1},\gamma_{0}^{\+}u_{2}\rangle_{-\frac12,\frac12}\\
=&
\sum_{\ell=1}^{n}\left(\rho_{\ell}^{-1}\langle  \gamma_{1,\ell}^{\-}u_{1},\gamma_{0,\ell}^{\-}u_{2}\rangle_{-\frac12,\frac12} -\langle 
\gamma_{1,\ell}^{\+}u_{1},\gamma_{0,\ell}^{\+}u_{2}\rangle_{-\frac12,\frac12}\right)\\
=&
\left\langle(\rho_{1}^{-1}  \gamma_{1,1}^{\-}-\gamma^{\+}_{1,1})u_{1}\oplus\dots\oplus(\rho_{n}^{-1}  \gamma_{1,n}^{\-}-\gamma^{\+}_{1,n})u_{1},\gamma_{0}u_{2}\right\rangle_{-\frac12,\frac12}\,.
\end{align*}
By the surjectivity of $\gamma_{0}\in{\B}(  H^{1}(\mathbb{R}^{3})  ,H^{1/2}(  \Gamma) )  $, this implies
$ \gamma_{1,\ell}^{\+}u_{1}=\rho_{\ell}^{-1}\gamma_{1,\ell}^{\-}u_{1}$ for any index $\ell$. Since $\dom(  A)  \subseteq H^{1}(\mathbb{R}^{3})  $, for any $u\in \dom(  A)$ one has $\gamma_{0}^{\-}u=\gamma_{0}^{\+}u$ and in conclusion one obtains $\dom(  A)  \subseteq\dom(  A_{v,\rho })$.\end{proof}
\begin{remark} Let us notice the operatorial inclusions
\be\label{op-incl1}
\Delta_{\Omega_{\+}}^{\mathrm{min}}\oplus v^{2}\Delta_{\Omega }^{\mathrm{min}}\subseteq A_{v,\rho }\subseteq\Delta_{\Omega_{\+}}^{\mathrm{max}}\oplus v^{2}\Delta_{\Omega }
\ee
and the operatorial identities
$$
v^{2}\Delta_{\Omega}^{\mathrm{min}}=v_{1}^{2}\Delta_{\Omega_{1} }^{\mathrm{min}}\oplus\dots\oplus v_{n}^{2}\Delta_{\Omega_{n} }^{\mathrm{min}}\,,\qquad v^{2}\Delta_{\Omega }^{\mathrm{max}}=v_{1}^{2}\Delta_{\Omega_{1} }^{\mathrm{max}}\oplus\dots\oplus v_{n}^{2}\Delta_{\Omega_{n} }^{\mathrm{max}}\,.
$$
By the previous Theorem, the self-adjoint realization of the acoustic operator is given by the restriction 
$$A_{v,\rho}=(\Delta_{\Omega_{\+}}^{\mathrm{max}}\oplus v^{2}\Delta_{\Omega }^{\mathrm{max}})|{\mathscr D}_{\rho}
\,.
$$ 
Obviously, $A_{1,1}$ coincides with the free Laplacian in $L^{2}(\RE^{3})$ with self-adjointness domain $H^{2}(\RE^{3})$.
\end{remark}
\section{Resolvents difference}
Now, we look for for the resolvent of $A_{v,\rho }$; in particular, we are interested in a formula for the difference  $(-A_{v,\rho }-\kappa^{2})^{-1}-R_{\kappa}$.\par
Following the scheme in \cite{JST}, we introduce the bounded operators
$$
\tau:H^{1}_{\Delta}(\RE^{3}\backslash\Gamma)\to H^{1}_{\Delta}(\RE^{3}\backslash\Gamma)\oplus H^{1/2}(\Gamma)\oplus H^{-1/2}(\Gamma)\,,
$$
$$
\tau u:=\tau_{1}u\oplus\tau_{2}u\,,\qquad \tau_{1}u:=u\,,\qquad\tau_{2}u:=\gamma_{0}u\oplus \gamma_{1}u\,,
$$
$$
G_{\kappa}: L^{2}(\RE^{3})\oplus H^{-1/2}(\Gamma)\oplus H^{1/2}(\Gamma)\to H^{1}_{\Delta}(\RE^{3}\backslash\Gamma)\,,
$$
$$
G_{\kappa}(u\oplus\varphi\oplus\phi):=R_{\kappa}u+\SL_{\kappa}\varphi+\DL_{\kappa}\phi\,,
$$
$$
\tau G_{\kappa}:L^{2}(\RE^{3})\oplus H^{-1/2}(\Gamma)\oplus H^{1/2}(\Gamma)
\to H^{1}_{\Delta}(\RE^{3}\backslash\Gamma)\oplus H^{1/2}(\Gamma)\oplus H^{-1/2}(\Gamma)\,,
$$
$$
\tau G_{\kappa}:=\begin{bmatrix}R_{\kappa}&\SL_{\kappa}&\DL_{\kappa}\\
\gamma_{0}R_{\kappa}&\gamma_{0}\SL_{\kappa}&\gamma_{0}\DL_{\kappa}\\
\gamma_{1}R_{\kappa}&\gamma_{1}\SL_{\kappa}&\gamma_{1}\DL_{\kappa}\end{bmatrix}.
$$
Then, given two bounded operators
$$
B_{1}: H^{1}_{\Delta}(\RE^{3}\backslash\Gamma)\to L^{2}(\RE^{3})\,,
$$
$$
B_{2}: H^{1/2}(\Gamma)\oplus H^{-1/2}(\Gamma)\to H^{-1/2}(\Gamma)\oplus H^{1/2}(\Gamma)\,,
$$
we define the set
$$
Z_{\sb}:=\{\kappa\in\CO_{+}:(1-(B_{1}\oplus B_{2})\tau G_{\kappa})^{-1}\in\B(L^{2}(\RE^{3})\oplus H^{-1/2}(\Gamma)\oplus H^{1/2}(\Gamma))\}\,.
$$
\begin{remark} Notice that the (possibly empty) set $Z_{\sb}$ is open by the continuity of the map $\kappa\mapsto \tau G_{\kappa}$.
\end{remark}
With reference to \cite{JST}, here we have $\sb=(1,B_{1},B_{2})$. Even if the operators $B_{0}=1$, $B_{1}$ and $B_{2}$ do not satisfy the relations  (2.4) in \cite{JST}, and the spaces $H^{1}_{\Delta}(\RE^{3}\backslash\Gamma)$ and $L^{2}(\RE^{3})$ do not constitute a dual couple (these should correspond to the dual couple $(\fb_{1},\fb_{1}^{*})$ in \cite{JST}), most of the results in \cite[Section 2]{JST}, in particular, \cite[Theorems 2.1 and 2.6, Lemmata 2.4 and 2.5]{JST},  apply also to our case (aside from the self-adjointness issue in \cite[Theorem 2.1]{JST}).
\begin{theorem}\label{teo-res} Suppose $Z_{\sb}\not=\varnothing$; then there exists a closed operator 
$$
A_{\sb}:\dom(A_{\sb})\subseteq L^{2}(\RE^{3})\to L^{2}(\RE^{3})
$$
such that 
\be\label{resB}
(-A_{\sb}-\kappa^{2})^{-1}=R_{\kappa}+G_{\kappa}
\big(1-(B_{1}\oplus B_{2})\tau G_{\kappa}\big)^{-1}(B_{1}\oplus B_{2})G_{-\bar\kappa}^{*}\,,\quad \kappa\in Z_{\sb}\,.
\ee
\end{theorem}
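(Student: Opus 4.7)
The plan is to adapt the scheme of \cite[Theorem 2.1]{JST} in the version that drops the self-adjointness conclusion, since as noted in the excerpt the symmetry of $B_{1}$ and $B_{2}$ is the only ingredient we lack. In broad strokes, I would set
$$
R^{\sb}_{\kappa} := R_{\kappa}+G_{\kappa}Q_{\kappa}^{-1}(B_{1}\oplus B_{2})G_{-\bar{\kappa}}^{*}\,,\qquad Q_{\kappa}:=1-(B_{1}\oplus B_{2})\tau G_{\kappa}\,,
$$
for $\kappa\in Z_{\sb}$, observe from the mapping properties of $R_{\kappa}$, $G_{\kappa}$, $G_{-\bar\kappa}^{*}$ and the definition of $Z_{\sb}$ that this is a bounded operator on $L^{2}(\RE^{3})$ depending analytically on $\kappa$, and then show that $\{R^{\sb}_{\kappa}\}_{\kappa\in Z_{\sb}}$ is the resolvent family of a unique closed operator $A_{\sb}$.

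The heart of the argument is to establish the pseudoresolvent identity
$$
R^{\sb}_{\kappa}-R^{\sb}_{\mu}=(\kappa^{2}-\mu^{2})R^{\sb}_{\kappa}R^{\sb}_{\mu}\,,\qquad \kappa,\mu\in Z_{\sb}\,.
$$
I would reduce it to three elementary resolvent-type identities for the building blocks: (i) the first resolvent identity $R_{\kappa}-R_{\mu}=(\kappa^{2}-\mu^{2})R_{\kappa}R_{\mu}$ for the free Laplacian; (ii) $G_{\kappa}-G_{\mu}=(\kappa^{2}-\mu^{2})R_{\kappa}G_{\mu}$, obtained from the volume-slot identity (i) together with $(-\Delta-\kappa^{2})\SL_{\kappa}=(-\Delta-\kappa^{2})\DL_{\kappa}=0$ off $\Gamma$; and (iii) the dual version $G_{-\bar{\kappa}}^{*}-G_{-\bar{\mu}}^{*}=(\kappa^{2}-\mu^{2})G_{-\bar{\mu}}^{*}R_{\kappa}$. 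From (ii) one derives
$$
Q_{\kappa}^{-1}-Q_{\mu}^{-1}=(\kappa^{2}-\mu^{2})Q_{\kappa}^{-1}(B_{1}\oplus B_{2})\tau R_{\kappa}G_{\mu}Q_{\mu}^{-1}\,,
$$
and substituting (i)-(iii) and this last relation into $R^{\sb}_{\kappa}-R^{\sb}_{\mu}$ and collecting terms yields the pseudoresolvent identity after a lengthy but routine algebraic manipulation, following exactly the layout of \cite[Section 2]{JST}.

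Once the pseudoresolvent identity is secured, I would verify injectivity of $R^{\sb}_{\kappa}$: if $R^{\sb}_{\kappa}f=0$, then applying $-\Delta-\kappa^{2}$ on $\RE^{3}\setminus\Gamma$ yields $f=0$ since $G_{\kappa}(\cdot)$ is harmonic off $\Gamma$ relative to $-\Delta-\kappa^{2}$ on its $H^{\pm 1/2}(\Gamma)$-slots and the $L^{2}$-slot reproduces $f$ via $R_{\kappa}$. Injectivity together with the pseudoresolvent identity then implies, by the standard abstract criterion, that there exists a unique closed operator $A_{\sb}$ on $L^{2}(\RE^{3})$ with $\dom(A_{\sb}):=\ran(R^{\sb}_{\kappa_{0}})$ (independent of $\kappa_{0}\in Z_{\sb}$) and $A_{\sb}:=-\kappa_{0}^{2}-(R^{\sb}_{\kappa_{0}})^{-1}$; closedness follows at once from the invertibility of $R^{\sb}_{\kappa_{0}}$ as a bounded operator between Hilbert spaces.

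The main obstacle I anticipate is not a conceptual one but the careful bookkeeping of target and domain spaces of the building blocks in the verification of identities (i)-(iii). As the excerpt emphasises, $H^{1}_{\Delta}(\RE^{3}\setminus\Gamma)$ and $L^{2}(\RE^{3})$ do not form a dual pair in the sense of \cite{JST}, so the adjoint $G_{-\bar{\kappa}}^{*}$ must be interpreted carefully and identity (iii) established by hand rather than imported verbatim from \cite{JST}. Once this is under control, the remainder of the proof is a direct adaptation.
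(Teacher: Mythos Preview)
Your strategy matches the paper's: show the right-hand side defines an injective pseudoresolvent and invoke the standard criterion to obtain a closed operator; the pseudoresolvent verification via the building-block identities (i)--(iii) is precisely the route of \cite{JST,JFA} to which the paper defers.

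The gap is in your injectivity sketch. Applying $-\Delta-\kappa^{2}$ on $\RE^{3}\backslash\Gamma$ to $R^{\sb}_{\kappa}f=0$ does \emph{not} yield $f=0$: while the single- and double-layer slots of $G_{\kappa}$ are annihilated, the $L^{2}$-slot of $G_{\kappa}$ is $R_{\kappa}$, so the perturbative term contributes the $L^{2}$-component of $Q_{\kappa}^{-1}(B_{1}\oplus B_{2})G_{-\bar\kappa}^{*}f$ rather than zero, and you only obtain a relation between $f$ and that component. The paper does not write out the general injectivity argument either (it defers to \cite{JST}) but points to Lemma~\ref{inj} for the mechanism in the concrete setting: one applies a \emph{trace}-type map (there, the column $(\Delta_{\Omega}^{\max}1_{\Omega},\,\gamma_{1})$) to the equation $R_{\kappa}f=-G_{\kappa}Q_{\kappa}^{-1}(B_{1}\oplus B_{2})G_{-\bar\kappa}^{*}f$ and uses the algebraic identity relating that trace composed with $G_{\kappa}$ to $Q_{\kappa}$ to close the loop, forcing first the relevant traces of $R_{\kappa}f$ and then $f$ itself to vanish.
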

\begin{proof} One follows the same arguments as in the proof of \cite[Theorem 2.1]{JST} (which build on \cite[Theorem 2.1]{JFA}). Noticing that the additional conditions \cite[hypothesis (2.4)]{JST} serve only to show that the resolvent in \cite[Theorem 2.1]{JST} is the resolvent of a self-adjoint operator, in our case it is enough to prove that the operator in \eqref{resB} is an injective pseudo-resolvent (hence, is the resolvent of a closed operator). The proof of this statement is exactly the same as the one provided in \cite[Theorems 2.1]{JST} and is not reproduced here\footnote{For a similar proof of injectivity, see Lemma \ref{inj} below.}.
\end{proof}
Now, we determine $B_{1}$ and $B_{2}$ such that $A_{\sb}=A_{v,\rho }$. 
\begin{theorem}\label{equal}
Let
\be\label{B1}
B_{1}=B_{v}:=({v^{2}}-1)1_{\Omega}^{*}\Delta^{\max}_{\Omega }1_{\Omega}\,,
\ee
\be\label{B2}
B_{2}=B_{\rho}:=\begin{bmatrix}0&\widetilde\rho \ \\
0&0
\end{bmatrix},\qquad
\widetilde\rho:=2\sum_{\ell=1}^{n}\frac{\rho_{\ell}-1}{\rho_{\ell}+1}\,\chi_{\Gamma_{\ell}}
\,.
\ee
If $Z_{\sb}\not=\varnothing$ for such a choice, then $$
A_{\sb}=A_{v,\rho }\,.
$$
\end{theorem}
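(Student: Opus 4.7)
The plan is to prove $A_{\sb}=A_{v,\rho}$ by verifying that their resolvents agree at some (hence any) $\kappa\in Z_{\sb}$. Since $A_{v,\rho}$ is nonpositive and self-adjoint in $L^{2}(\RE^{3},b^{-1})\cong L^{2}(\RE^{3})$ by Theorem \ref{acoustic}, one has $-\kappa^{2}\in\varrho(A_{v,\rho})$ for every $\kappa\in\CO_{+}$, so $(-A_{v,\rho}-\kappa^{2})^{-1}\in\B(L^{2}(\RE^{3}))$ is well defined; the equality of two bounded resolvents at a single point of the common resolvent set then forces $A_{\sb}=A_{v,\rho}$ as closed operators.

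Fix $\kappa\in Z_{\sb}$ and $f\in L^{2}(\RE^{3})$. Set $u:=(-A_{\sb}-\kappa^{2})^{-1}f$ and $\xi:=(1-(B_{1}\oplus B_{2})\tau G_{\kappa})^{-1}(B_{1}\oplus B_{2})G_{-\bar\kappa}^{*}f$, so that $u=R_{\kappa}f+G_{\kappa}\xi$ by \eqref{resB}. The first task is to reorganize the definition of $\xi$ into the fixed-point form $\xi=(B_{1}\oplus B_{2})\tau u$; this follows, with the duality conventions of \cite{JST}, from the row-by-row identification of $G_{-\bar\kappa}^{*}f$ with $\tau R_{\kappa}f$ (unpacking $R_{-\bar\kappa}^{*}=R_{\kappa}$, $\SL_{-\bar\kappa}^{*}=\gamma_{0}R_{\kappa}$ and the analogous identity for $\DL_{-\bar\kappa}^{*}$). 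Substituting the explicit formulas \eqref{B1}--\eqref{B2} and using that the lower row of $B_{\rho}$ vanishes, this yields the integral equation
\begin{equation}\label{fp-eq}
u=R_{\kappa}f+R_{\kappa}\bigl((v^{2}-1)1_{\Omega}^{*}\Delta_{\Omega}^{\max}1_{\Omega}u\bigr)+\SL_{\kappa}\bigl(\widetilde\rho\,\gamma_{1}u\bigr).
\end{equation}

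From \eqref{fp-eq} the global regularity and the transmission conditions can be read off directly. The first two summands lie in $H^{2}(\RE^{3})$ since their sources are in $L^{2}(\RE^{3})$, hence they have continuous Dirichlet and Neumann traces across $\Gamma$. The single-layer term has density $\widetilde\rho\,\gamma_{1}u\in H^{-1/2}(\Gamma)$; it therefore lies in $H^{1}(\RE^{3})\cap L_{\Delta}^{2}(\RE^{3}\setminus\Gamma)$ and produces the standard jumps $[\gamma_{0}]\SL_{\kappa}\varphi=0$ and $[\gamma_{1}]\SL_{\kappa}\varphi=-\varphi$. Consequently $u\in H^{1}(\RE^{3})\cap L_{\Delta}^{2}(\RE^{3}\setminus\Gamma)$ and $[\gamma_{1}]u=-\widetilde\rho\,\gamma_{1}u$; on each $\Gamma_{\ell}$ this reads
\[
\gamma_{1,\ell}^{\+}u-\gamma_{1,\ell}^{\-}u=-\tfrac{\rho_{\ell}-1}{\rho_{\ell}+1}\bigl(\gamma_{1,\ell}^{\+}u+\gamma_{1,\ell}^{\-}u\bigr),
\]
which a short algebraic manipulation shows to be equivalent to $\rho_{\ell}\,\gamma_{1,\ell}^{\+}u=\gamma_{1,\ell}^{\-}u$. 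Hence $u\in\mathscr{D}_{\rho}$.

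It remains to verify the equation. Applying $-\Delta-\kappa^{2}$ distributionally to \eqref{fp-eq} and using $(-\Delta-\kappa^{2})R_{\kappa}=\mathrm{id}$ together with $(-\Delta-\kappa^{2})\SL_{\kappa}\varphi=0$ on $\RE^{3}\setminus\Gamma$, restriction to $\Omega_{\+}$ gives $-\Delta_{\Omega_{\+}}^{\max}u-\kappa^{2}u=f$, while restriction to $\Omega$ gives $-\Delta_{\Omega}^{\max}u-\kappa^{2}u=f+(v^{2}-1)\Delta_{\Omega}^{\max}u$, i.e.\ $-v^{2}\Delta_{\Omega}^{\max}u-\kappa^{2}u=f$ on $\Omega$. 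Combined with $u\in\mathscr{D}_{\rho}$ and the description $A_{v,\rho}=(\Delta_{\Omega_{\+}}^{\max}\oplus v^{2}\Delta_{\Omega}^{\max})|\mathscr{D}_{\rho}$ from Theorem \ref{acoustic}, these two piecewise identities read $(-A_{v,\rho}-\kappa^{2})u=f$, so $u=(-A_{v,\rho}-\kappa^{2})^{-1}f$ and the two resolvents coincide.

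The main technical obstacle is the fixed-point reorganization leading to \eqref{fp-eq}: the adjoint $G_{-\bar\kappa}^{*}$ has to be computed with the duality pairings of \cite{JST}, which is not entirely transparent here because $G_{\kappa}$ lands in the non-self-dual space $H_{\Delta}^{1}(\RE^{3}\setminus\Gamma)$ and the first slot is paired via $L^{2}$ rather than via the abstract $(\fb_{1},\fb_{1}^{*})$ duality; moreover, one must verify that the action of $B_{1}\oplus B_{2}$ on $G_{-\bar\kappa}^{*}f$ coincides with its action on $\tau R_{\kappa}f$. Once this identity is established, the remainder of the proof is a routine application of the jump relations for the single-layer potential together with the piecewise Green-identity computation already carried out in Theorem \ref{acoustic}.
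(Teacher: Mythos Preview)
Your argument is correct and reaches the same conclusion as the paper, but by a genuinely different route. The paper proceeds via two operator inclusions: it first invokes \cite[Lemmata 2.4 and 2.5]{JST} to show that any $u\in\dom(A_{\sb})$ satisfies the transmission conditions and that $A_{\sb}\subseteq\Delta_{\Omega_{\+}}^{\max}\oplus v^{2}\Delta_{\Omega}^{\max}$, whence $A_{\sb}\subseteq A_{v,\rho}$; then it proves the reverse domain inclusion $\mathscr D_{\rho}\subseteq\dom(A_{\sb})$ separately, using the representation $u+\SL_{\kappa}[\gamma_{1}]u-\DL_{\kappa}[\gamma_{0}]u\in H^{2}(\RE^{3})$ valid for any $u\in L^{2}_{\Delta}(\RE^{3}\backslash\Gamma)$. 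You instead show directly that the two resolvents coincide on all of $L^{2}(\RE^{3})$, which is cleaner and avoids the reverse-inclusion step entirely, at the price of needing $-\kappa^{2}\in\varrho(A_{v,\rho})$ up front (which you correctly justify from Theorem~\ref{acoustic}).

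The fixed-point identity $\xi=(B_{1}\oplus B_{2})\tau u$ that you derive is exactly the content of \cite[Lemma 2.5]{JST}, which the paper invokes as a black box; your duality computation $(B_{1}\oplus B_{2})G_{-\bar\kappa}^{*}f=(B_{1}\oplus B_{2})\tau R_{\kappa}f$ amounts to re-proving that lemma in this concrete setting. You are right that the notation $G_{-\bar\kappa}^{*}$ is opaque here because of the non-self-dual target $H^{1}_{\Delta}(\RE^{3}\backslash\Gamma)$; in practice, both the paper and \cite{JST} simply use $\tau u_{\kappa}$ with $u_{\kappa}=R_{\kappa}f$ in place of $G_{-\bar\kappa}^{*}f$ in the domain description (see the first displayed formula in the paper's proof), which is the identification you need. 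So what you flag as the ``main technical obstacle'' is handled in the paper by citation rather than by calculation, and once granted, the two proofs are equivalent in content.
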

\begin{proof} In the following, we take $\kappa\in Z_{\sb}$; let us denote by $\widehat R_{\kappa}$ the resolvent in \eqref{resB}. Then,
\begin{align*}
&\dom(A_{\sb})=\widehat R_{\kappa} (L^{2}(\RE^{3}))\\
=&\big\{u\in H_{\Delta}^{1}(\RE^{3}\backslash\Gamma):u=u_{\kappa}+G_{\kappa}\big(1-(B_{1}\oplus B_{2})\tau G_{\kappa}\big)^{-1}(B_{1}\oplus B_{2})\tau u_{\kappa}\,,\ u_{\kappa}\in H^{2}(\RE^{3})\big\}\,.
\end{align*}
Furthermore, by
$$
\widehat R_{\kappa} (-\Delta-\kappa^{2})u_{\kappa}=u_{\kappa}+G_{\kappa}\big(1-(B_{1}\oplus B_{2})\tau G_{\kappa}\big)^{-1}(B_{1}\oplus B_{2})\tau u_{\kappa}=u\,,
$$
one gets
\be\label{act}
(\widehat R_{\kappa} )^{-1}u=(-A_{\sb}-\kappa^{2})u=(-\Delta-\kappa^{2})u_{\kappa}\,.
\ee
According to \cite[Lemma 2.4]{JST}, the linear map\footnote{Notice the change of notation with 
respect to \cite{JST}: the map there denoted by ${\rho}_{\sb}$ is here denoted by $q_{\sb}$}
$$
q_{\sb}:\dom(A_{\sb})\to L^{2}(\RE^{3})\oplus H^{-1/2}(\Gamma)\oplus H^{1/2}(\Gamma)\,,
$$ 
$$
q_{\sb}(u):=\big(1-(B_{1}\oplus B_{2})\tau G_{\kappa}\big)^{-1}(B_{1}\oplus B_{2})\tau u_{\kappa}\equiv q_{\circ}(u)\oplus q_{-}(u)\oplus q_{+}(u)\,,
$$
is well-defined and so
$$
\dom(A_{\sb})=\big\{u\in H_{\Delta}^{1}(\RE^{3}\backslash\Gamma):u=u_{\kappa}+R_{\kappa}{q}_{\circ}(u)+
\SL_{\kappa}{q}_{-}(u)+\DL_{\kappa}{q}_{+}(u)\,,\ u_{\kappa}\in H^{2}(\RE^{3})\big\}\,.
$$
Furthermore, by \cite[Lemma 2.5]{JST}, 
\be\label{bc-B}
u\in\dom(A_{\sb})\quad\Rightarrow\quad
\begin{cases}
&B_{1}u={q}_{\circ}(u)\\
&B_{2}(\gamma_{0}u\oplus\gamma_{1}u)=
{q}_{-}(u)\oplus{q}_{+}(u)\,.
\end{cases}
\ee
By $\ran(B_{1})\subseteq L^{2}(\RE^{3})$, one gets
$$
\ran(R_{\kappa}B_{1})\subseteq H^{2}(\RE^{3})\subseteq\ker([\gamma_{0}])\cap\ker([\gamma_{1}])\,.
$$
Hence, by the mapping properties and by the jump relations for the single- and double-layer operators  and by \eqref{bc-B},
\begin{align*}
\dom(A_{\sb})=&\big\{u\in H_{\Delta}^{1}(\RE^{3}\backslash\Gamma):u_{\kappa}:=u-R_{\kappa}B_{1}u+
\SL_{\kappa}[\gamma_{1}]u-\DL_{\kappa}[\gamma_{0}]u\in H^{2}(\RE^{3})\big\}\\
\equiv&\big\{u\in H_{\Delta}^{1}(\RE^{3}\backslash\Gamma):\widetilde u_{\kappa}:=u+
\SL_{\kappa}[\gamma_{1}]u-\DL_{\kappa}[\gamma_{0}]u\in H^{2}(\RE^{3})\big\}\,,
\end{align*}
\be\label{bc-B-1}
u\in\dom(A_{\sb})\quad\Rightarrow\quad
 B_{2}(\gamma_{0}u\oplus\gamma_{1}u)=(-[\gamma_{1}]u)\oplus[\gamma_{0}](u)\,.
\ee
Since
$$
\gamma_{1}^{\+} u=\gamma_{1}u+\frac12\,[\gamma_{1}]u\,,\qquad 
\gamma_{1}^{\-} u=\gamma_{1}u-\frac12\,[\gamma_{1}]u\,,
$$
the boundary conditions \eqref{bc} are equivalent to 
\be\label{bc-equi}
[\gamma_{0}]u=0\,,\qquad [\gamma_{1,\ell}]u=2\,\frac{1-\rho_{\ell}}{1+\rho_{\ell}}\,\gamma_{1,\ell}u\equiv -\widetilde\rho_{\ell}\,\gamma_{1,\ell}u\,,\qquad \ell=1,\dots,n\,.
\ee
Therefore, by \eqref{bc-B-1}, taking $B_{2}=B_{\rho}$ one obtains
\be\label{dAB}
\dom(A_{\sb})=\big\{u\in H^{1}(\RE^{3})\cap L_{\Delta}^{2}(  \mathbb{R}^{3}\backslash\Gamma)
: \widetilde u_{\kappa}:=u
-\widetilde\rho \,\SL_{\kappa}\gamma_{1}u\in H^{2}(\RE^{3})\big\}
\ee
and 
\be\label{d-in}
\dom(A_{\sb})\subseteq {\mathscr D}_{\rho}
\,.
\ee
Furthermore, by \eqref{act}, one has
\begin{align*}
&(A_{\sb}+\kappa^{2})u=(\Delta+\kappa^{2})(u-R_{\kappa}B_{1}u-\widetilde\rho \,
\SL_{\kappa}\gamma_{1}u)\\
=&(\Delta^{\max}_{\Omega }\oplus\Delta^{\max}_{\Omega_{\+}}+\kappa^{2})u-(\Delta+\kappa^{2})R_{\kappa}B_{1}u-\widetilde\rho \,(\Delta^{\max}_{\Omega }\oplus\Delta^{\max}_{\Omega_{\+}}+\kappa^{2})\SL_{\kappa}\gamma_{1}u\\
=&(\Delta^{\max}_{\Omega }\oplus\Delta^{\max}_{\Omega_{\+}}+\kappa^{2})u+B_{1}u\,.
\end{align*}
Therefore, by taking $ B_{1}=B_{\rho}\equiv0\oplus({v^{2}}-1)\Delta^{\max}_{\Omega }$,
one gets
$$
A_{\sb}\subseteq\Delta^{\max}_{\Omega_{\+}}\oplus(v^{2}\Delta^{\max}_{\Omega })\,.
$$
Thus, by \eqref{d-in} and \eqref{op-incl1},
$$
A_{\sb}\subseteq A_{v,\rho }
$$ 
and, to obtain the equality $A_{\sb}=A_{v,\rho }$, it remains to prove the inclusion opposite to 
\eqref{d-in}.\par
For any $u\in L^{2}_{\Delta}(\RE^{3}\backslash\Gamma)\equiv\dom(\Delta_{\Omega_{\+}}^{\max}\oplus \Delta_{\Omega}^{\max})$, one has (see \cite[Lemma 4.2]{JDE 16})
$$u+\SL_{\kappa}[\gamma_{1}]u-\DL_{\kappa}[\gamma_{0}]u\in H^{2}(\RE^{3})\,.
$$
Hence, for any $u\in {\mathscr D}_{\rho}
\subseteq H^{1}(\RE^{3})\cap L^{2}_{\Delta}(\RE^{3}\backslash\Gamma)$ 
one has, by \eqref{bc-equi}, 
$$
u-\widetilde\rho \,\SL_{\kappa}\gamma_{1}u\in H^{2}(\RE^{3})\,.
$$
By \eqref{dAB}, this gives 
$$
{\mathscr D}_{\rho}
\subseteq\dom(A_{\sb})\,.
$$
\end{proof}
\begin{remark}\label{alt} By the proof of the previous Theorem, see \eqref{bc-equi}, one gets an alternative definition of the self-adjointness domain of $A_{v,\rho}$:
$$
{\mathscr D}_{\rho}
=\left\{  u\in H^{1}(\RE^{3})\cap
L_{\Delta}^{2}(  \mathbb{R}^{3}\backslash\Gamma)  \,:\, \frac12\,
[\gamma_{1,\ell}]u=\frac{1-\rho_{\ell}}{1+\rho_{\ell}}\,\gamma_{1,\ell}u\,,\ \ell=1,\dots,n\right\}\,.
$$  
\end{remark}
Now, we show that $Z_{\sb}$ in Theorem \ref{equal} is not empty.
\begin{lemma}\label{ZB} Let $B_{1}$ and $B_{2}$ as in \eqref{B1} and \eqref{B2}. Then $Z_{\sb}=\CO_{+}
$.
\end{lemma}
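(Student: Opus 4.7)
The plan is to prove that $M_\kappa := 1-(B_1\oplus B_2)\tau G_\kappa$ is boundedly invertible on $L^2(\RE^3)\oplus H^{-1/2}(\Gamma)\oplus H^{1/2}(\Gamma)$ for every $\kappa\in\CO_+$, combining an injectivity step that exploits the self-adjointness of $A_{v,\rho}$ with a surjectivity step via the meromorphic Fredholm theorem.

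For \emph{injectivity}, suppose $M_\kappa(u,\varphi,\phi)=0$. The second row of $B_2=B_\rho$ in \eqref{B2} vanishes, hence the third block row of $M_\kappa$ is the identity on $H^{1/2}(\Gamma)$ and $\phi=0$. Put $w:=R_\kappa u+\SL_\kappa\varphi\in H^1_\Delta(\RE^3\setminus\Gamma)$ and retrace the computations in the proof of Theorem \ref{equal}: the relation $u=B_1 w$ combined with $(-\Delta-\kappa^2)w=u$ on $\RE^3\setminus\Gamma$ rearranges to the piecewise identity $-v^2\Delta w=\kappa^2 w$; the absence of $\gamma_0$-jumps of $R_\kappa,\SL_\kappa$ gives $w\in H^1(\RE^3)$; and $\varphi=\widetilde\rho\,\gamma_1 w$ together with $[\gamma_1]w=-\varphi$ encodes, via the equivalence \eqref{bc-equi}, the transmission conditions $\rho_\ell\gamma_{1,\ell}^{\mathrm{ex}}w=\gamma_{1,\ell}^{\mathrm{in}}w$. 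Therefore $w\in\mathscr{D}_\rho$ and $(-A_{v,\rho}-\kappa^2)w=0$. By Theorem \ref{acoustic}, $A_{v,\rho}$ is self-adjoint and nonpositive, so $\sigma(-A_{v,\rho})\subseteq[0,\infty)$; for every $\kappa\in\CO_+$ one readily checks $-\kappa^2\notin[0,\infty)$, so $(-A_{v,\rho}-\kappa^2)$ is injective, forcing $w=0$, and then $u=0$, $\varphi=0$.

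For \emph{surjectivity}, I would pre-multiply by the bijective diagonal $D:=\operatorname{diag}(v^2,I,I)$ (invertible since $v$ is bounded above and below), reducing matters to the invertibility of $D^{-1}M_\kappa=I+T_\kappa$ with
\[
T_\kappa(u,\varphi,\phi):=\bigl(\kappa^2(1-v^{-2})w,\,-\widetilde\rho\,\gamma_1 w,\,0\bigr),\qquad w:=R_\kappa u+\SL_\kappa\varphi+\DL_\kappa\phi.
\]
Decompose $T_\kappa=T_0^\partial+C_\kappa$ by isolating the $\kappa$-independent static boundary block $T_0^\partial(u,\varphi,\phi):=(0,\,-\widetilde\rho\,\gamma_1(\SL_0\varphi+\DL_0\phi),\,0)$. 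The remainder $C_\kappa$ is compact: its bulk components land in $L^2(\Omega)$ and factor through Rellich's embedding $H^1(\Omega)\hookrightarrow L^2(\Omega)$; the boundary component $-\widetilde\rho\,\gamma_1 R_\kappa$ maps into $H^{1/2}(\Gamma)$, compactly embedded in $H^{-1/2}(\Gamma)$; and $\gamma_1(\SL_\kappa-\SL_0)$, $\gamma_1(\DL_\kappa-\DL_0)$ have smooth kernels because $(e^{i\kappa r}-1)/r$ is entire in $r$. Hence $\kappa\mapsto C_\kappa$ is an analytic family of compact operators on $\CO_+$; provided $I+T_0^\partial$ is Fredholm of index zero, the meromorphic Fredholm theorem ensures that $(I+T_\kappa)^{-1}$ is meromorphic on $\CO_+$, and the injectivity just shown excludes all poles, so $Z_\sb=\CO_+$.

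The main obstacle is establishing the Fredholm property of $I+T_0^\partial$, which reduces to the Fredholmness of $I-\widetilde\rho K_0^*$ on $H^{-1/2}(\Gamma)$. For $\C^{1,\alpha}$ boundaries (as used in the later sections of the paper) $K_0^*$ is itself compact on $H^{-1/2}(\Gamma)$ and the property is immediate; for Lipschitz boundaries one leans on the classical invertibility of $\pm\tfrac12 I+K_0^*$ together with an algebraic rearrangement expressing $I-\widetilde\rho K_0^*$ as a compact perturbation of an invertible operator.
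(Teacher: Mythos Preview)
Your proof is correct and follows a genuinely different route. The paper first reduces to the block $1-M_\kappa$ on $L^2(\Omega)\oplus H^{-1/2}(\Gamma)$, inverts it explicitly at $\kappa=0$, and then invokes an abstract result from \cite{CFP} to the effect that a nonempty $Z_\sb$ must equal $\{\kappa\in\CO_+:-\kappa^2\in\varrho(\Delta)\cap\varrho(A_{v,\rho})\}$, after which $\sigma(A_{v,\rho})\subseteq(-\infty,0]$ finishes. You instead establish injectivity for every $\kappa\in\CO_+$ by reconstructing, from a kernel element, a function $w\in\mathscr{D}_\rho$ with $(-A_{v,\rho}-\kappa^2)w=0$ (essentially replaying the domain identification of Theorem~\ref{equal}), and couple this with a Fredholm argument; this buys self-containment (no appeal to \cite{CFP}) at the cost of checking compactness of $C_\kappa$ block by block. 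A minor simplification: you do not actually need the meromorphic Fredholm theorem, since Fredholm index zero is preserved under the compact perturbation $C_\kappa$ and injectivity then yields invertibility directly for each $\kappa$. Both approaches hinge on the same boundary ingredient, and the paper handles the Lipschitz case more crisply than your sketch: rather than an unspecified ``algebraic rearrangement'', it invokes the spectral inclusion $\sigma(K_0^*)\subseteq[-1/2,1/2)$ (valid on Lipschitz domains via Plemelj symmetrisation, see \cite{Kang}, \cite{AKMP}) together with $\widetilde\rho_\ell^{\,-1}=\tfrac12(\rho_\ell+1)/(\rho_\ell-1)\notin[-1/2,1/2)$ for $\rho_\ell\in(0,\infty)\setminus\{1\}$, obtaining outright invertibility of $I-\widetilde\rho K_0^*$ on $H^{-1/2}(\Gamma)$, which is exactly what your final step needs.
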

\begin{proof} By
\begin{align}\label{Qf}
&\big(1-(B_{1}\oplus B_{2})\tau G_{\kappa}\big)^{-1}\\\nonumber
=&
\begin{bmatrix}1-({v^{2}}-1)1_{\Omega}^{*}\Delta^{\max}_{\Omega }1_{\Omega}R_{\kappa}&(1-{v^{2}})1_{\Omega}^{*}\Delta^{\max}_{\Omega }1_{\Omega}\SL_{\kappa}&(1-{v^{2}})1_{\Omega}^{*}\Delta^{\max}_{\Omega }1_{\Omega}\DL_{\kappa}\\
-\widetilde\rho \,\gamma_{1}R_{\kappa}&1-\widetilde\rho \,\gamma_{1}\SL_{\kappa}&-\widetilde\rho \,\gamma_{1}\DL_{\kappa}\\
0&0&1\end{bmatrix}^{-1}\\\nonumber
=&
\begin{bmatrix}\big(1-\widetilde M_{\kappa}\,\big)^{-1}&\big(1-\widetilde M_{\kappa}\,\big)^{-1}V_{\kappa}\\
0&1
\end{bmatrix},
\end{align}
where
$$
\widetilde M_{\kappa}:=\begin{bmatrix}({v^{2}}-1)1_{\Omega}^{*}\Delta^{\max}_{\Omega }1_{\Omega}R_{\kappa}&({v^{2}}-1)1_{\Omega}^{*}\Delta^{\max}_{\Omega }1_{\Omega}\SL_{\kappa}\\
\widetilde\rho \,\gamma_{1}R_{\kappa}&\widetilde\rho \,\gamma_{1}\SL_{\kappa}
\end{bmatrix}
$$
and
$$
V_{\kappa}:=\begin{bmatrix}({v^{2}}-1)1_{\Omega}^{*}\Delta^{\max}_{\Omega }1_{\Omega}\DL_{\kappa}\\
\widetilde\rho \,\gamma_{1}\DL_{\kappa}
\end{bmatrix},
$$
one gets
$$
Z_{\sb}\supseteq\{\kappa\in\CO_{+}: \big(1-\widetilde M_{\kappa}\,\big)^{-1}\in\B(L^{2}(\RE^{3})\oplus H^{-1/2}(\Gamma))\}\,.
$$ 
Using the identification $L^{2}(\RE^{3})\equiv L^{2}(\Omega_{\+})\oplus L^{2}(\Omega)$ given by the unitary map $u\mapsto 1_{\Omega_{\+}}\,u\oplus 1_{\Omega}u$ with inverse $u_{\+}\oplus u_{\-}\mapsto 
1_{\Omega_{\+}}^{*}\,u_{\+}+1^{*}_{\Omega}u_{\-}$, one gets, setting
$$
N_{\kappa}:=1_{\Omega}R_{\kappa}1_{\Omega}^{*}:L^{2}(\Omega)\to L^{2}(\Omega) \,,
$$
and noticing that $\gamma_{1}1_{\Omega}u=\gamma_{1}^{\-}u=\gamma_{1}u$ whenever $u\in H^{2}(\RE^{3})$,
\begin{align}\label{1-M}
&(1-\widetilde M_{\kappa})^{-1}\\\nonumber
=&\begin{bmatrix}1&0&0\\
(1-{v^{2}})\Delta^{\max}_{\Omega }1_{\Omega}R_{\kappa}1_{\Omega_{\+}}^{*}&1+(1-{v^{2}})\Delta^{\max}_{\Omega }N_{\kappa}
&(1-{v^{2}})\Delta^{\max}_{\Omega }1_{\Omega}\SL_{\kappa}\\
-\widetilde\rho \,\gamma_{1}R_{\kappa}1_{\Omega_{\+}}^{*}&-\widetilde\rho \,\gamma_{1}^{\-}N_{\kappa}& 1-\widetilde\rho \,\gamma_{1}\SL_{\kappa}
\end{bmatrix}^{-1}\\\nonumber
=&\begin{bmatrix}1&0\\
(1-M_{\kappa})^{-1}W_{\kappa}&(1-M_{\kappa})^{-1}
\end{bmatrix},
\end{align}
where
$$
M_{\kappa}=\begin{bmatrix}({v^{2}}-1)\Delta^{\max}_{\Omega }N_{\kappa}
&({v^{2}}-1)\Delta^{\max}_{\Omega }1_{\Omega}\SL_{\kappa}\\
\widetilde\rho \,\gamma_{1}^{\-}N_{\kappa}& \widetilde\rho \,\gamma_{1}\SL_{\kappa}
\end{bmatrix}
$$
and
$$
W_{\kappa}=\begin{bmatrix}
({v^{2}}-1)\Delta^{\max}_{\Omega }1_{\Omega}R_{\kappa}1_{\Omega_{\+}}^{*}\\
\widetilde\rho \,\gamma_{1}R_{\kappa}1_{\Omega_{\+}}^{*}\end{bmatrix}.
$$
Hence,
$$
Z_{\sb}\supseteq\{\kappa\in\CO_{+}: \big(1-M_{\kappa}\,\big)^{-1}\in\B(L^{2}(\Omega)\oplus H^{-1/2}(\Gamma))\}\,.
$$
Notice that, by the relations
$$
-\Delta^{\max}_{\Omega }N_{\kappa}=1+\kappa^{2}N_{\kappa}\,,\qquad
-\Delta^{\max}_{\Omega }1_{\Omega}\SL_{\kappa}=\kappa^{2}1_{\Omega}\SL_{\kappa}\,.
$$
$M_{\kappa}$ rewrites as
\be\label{Mk}
M_{\kappa}=\begin{bmatrix}({1-v^{2}})(1+\kappa^{2}N_{\kappa})
&(1-{v^{2}})\kappa^{2}1_{\Omega}\SL_{\kappa}\\
\widetilde\rho \,\gamma_{1}^{\-}N_{\kappa}& \widetilde\rho \,\gamma_{1}\SL_{\kappa}
\end{bmatrix}
\ee
By the analyticity of the operator-valued map $\CO\ni\kappa\mapsto M_{\kappa}$, $Z_{\sb}$ is not void whenever $\big(1-M_{0}\,\big)^{-1}\in\B(L^{2}(\Omega)\oplus H^{-1/2}(\Gamma))$. One has, denoting by $K_{0}\in\B(H^{1/2}(\Gamma))$ the Neumann-Poincar\'e operator $K_{0}:=\gamma_{0}\DL_{0}=(\gamma_{1}\SL_{0})^{*}$,
$$
(1-M_{0})^{-1}=\begin{bmatrix}v^{2}
&0\\
-\widetilde\rho \,\gamma_{1}^{\-}N_{0}& 1-\widetilde\rho \,K^{*}_{0}
\end{bmatrix}^{-1}=\begin{bmatrix}v^{-2}
&0\\
v^{-2}\,\widetilde\rho (1-\widetilde\rho \,K^{*}_{0})^{-1}\gamma_{1}^{\-}N_{0}& (1-\widetilde\rho \,K^{*}_{0})^{-1}
\end{bmatrix}.
$$
If $\rho_{1}=\dots=\rho_{n}=1$, then $\widetilde\rho =0$ and 
$$
(1-M_{0})^{-1}=\begin{bmatrix}v^{-2}
&0\\
0& 1
\end{bmatrix}\in \B(L^{2}(\Omega)\oplus H^{-1/2}(\Gamma))\,.
$$ 
Otherwise, without loss of generality, we can suppose that there exists $1\le n_{0}<n$ such that
$\rho_{\ell}\in(0,+\infty)\backslash\{1\}$ for any $\ell\in[n_{0},n]$ and $\rho_{\ell}=1$ otherwise.
Defining  $\Gamma_{0}:=\Gamma_{n_{0}}\cup\dots\cup\Gamma_{n}$, we use the orthogonal decomposition $H^{-1/2}(\Gamma)=H^{-1/2}(\Gamma\backslash\Gamma_{0})\oplus H^{-1/2}(\Gamma_{0})$. Therefore,
$$
(1-M_{0})^{-1}=\begin{bmatrix}v^{-2}
&0\\
v^{-2}\,(0\oplus\widetilde\rho_{0} (1-\widetilde\rho_{0} \,K^{*}_{00})^{-1})\gamma_{1}^{\-}N_{0}& 1\oplus(1-\widetilde\rho_{0} \,K^{*}_{00})^{-1}
\end{bmatrix},
$$
where $K_{00}$ denotes the Neumann-Poincar\'e operator for $\Gamma_{0}$ and 
$$\widetilde\rho_{0}(\phi_{n_{0}}\oplus\dots\oplus\phi_{n}):=(\widetilde\rho_{n_{0}}\phi_{n_{0}})\oplus\dots\oplus(\widetilde\rho_{n}\phi_{n})\,.
$$
Since ${\widetilde \rho_{\ell}\,}^{-1}\notin [-1/2,1/2)$ for any $\ell\in[n_{0},n]$ and 
$\sigma(K_{00}^{*})=\sigma(K_{00})\subseteq [-1/2,1/2)$ (see, e.g., \cite[Section 2]{Kang}, \cite[Section 5]{AKMP}), one has   $$(1-M_{0})^{-1}\in \B(L^{2}(\Omega)\oplus H^{-1/2}(\Gamma))\,.$$ 
Hence, $Z_{\sb}$ is not empty and so, by the same arguments as in \cite[Theorem 2.19 and Remark 2.20]{CFP}, one gets 
$$
Z_{\sb}=\{\kappa\in\CO_{+}:-\kappa^{2}\in\varrho(\Delta)\cap\varrho(A_{v,\rho })\}
\,.
$$
Since $A_{v,\rho}\le 0$, one has $\sigma(A_{v,\rho})\subseteq (-\infty,0]$ and so 
$$\varrho(A_{v,\rho })\supseteq\CO\backslash(-\infty,0]=\varrho(\Delta)\,.
$$
Therefore, 
$$
Z_{\sb}=\{\kappa\in\CO_{+}:-\kappa^{2}\in\varrho(\Delta)\}=\CO_{+}\,.
$$
\end{proof}
Combining Theorem \ref{equal} with Lemma \ref{ZB}, one obtains
\begin{theorem}\label{RD} For any $\kappa\in\CO_{+}$, there holds 
\begin{align*}
&(-A_{v,\rho }-\kappa^{2})^{-1}-(-\Delta-\kappa^{2})^{-1}\\
=&\begin{bmatrix}R_{\kappa}1_{\Omega}^{*}& \SL_{\kappa}
\end{bmatrix}
\begin{bmatrix}v^{2}+({v^{2}}-1)\kappa^{2}N_{\kappa}&({v^{2}}-1)\kappa^{2}1_{\Omega}\SL_{\kappa}\\-\widetilde\rho\,\gamma^{\-}_{1}N_{\kappa}&1-\widetilde\rho\,\gamma_{1}\SL_{\kappa}
\end{bmatrix}^{\!-1}
\begin{bmatrix}({v^{2}}-1)\Delta^{\max}_{\Omega }1_{\Omega}R_{\kappa}\\ \widetilde\rho\gamma_{1}R_{\kappa}
\end{bmatrix}.
\end{align*}
\end{theorem}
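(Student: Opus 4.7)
The plan is to combine the three preceding results. By Theorem \ref{equal} the choice $B_1 = B_v$, $B_2 = B_\rho$ of \eqref{B1}--\eqref{B2} gives $A_{\sb} = A_{v,\rho}$, and by Lemma \ref{ZB} one has $Z_{\sb} = \CO_+$, so the abstract resolvent identity \eqref{resB} of Theorem \ref{teo-res} reads, for every $\kappa \in \CO_+$,
\begin{equation*}
(-A_{v,\rho} - \kappa^2)^{-1} - R_\kappa = G_\kappa \bigl(1 - (B_1 \oplus B_2)\tau G_\kappa\bigr)^{-1} (B_1 \oplus B_2) G_{-\bar\kappa}^{*}.
\end{equation*}
All that remains is to rewrite the right-hand side as the matrix product in the statement; this is a bookkeeping exercise exploiting the sparsity of $B_1$, $B_2$ together with the block decompositions already established in the proof of Lemma \ref{ZB}.

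Two reductions drive the computation. First, $B_2$ has its unique nonzero slot mapping $H^{-1/2}(\Gamma)$ into $H^{-1/2}(\Gamma)$, so the $H^{1/2}(\Gamma)$ output of $(B_1 \oplus B_2)$ always vanishes; by the upper-block-triangular decomposition \eqref{Qf}, only the $(1 - \widetilde M_\kappa)^{-1}$ corner acts on such an input and the third slot stays zero. Second, $B_1 = (v^2-1) 1_\Omega^{*} \Delta_\Omega^{\max} 1_\Omega$ factors through $1_\Omega^{*}$, so its $L^2(\RE^3)$-output is supported in $\overline\Omega$; under the unitary identification $L^2(\RE^3) \cong L^2(\Omega_{\+}) \oplus L^2(\Omega)$ used in \eqref{1-M}, the $L^2(\Omega_{\+})$-component vanishes and the further triangular decomposition leaves only $(1 - M_\kappa)^{-1}$ acting on $L^2(\Omega) \oplus H^{-1/2}(\Gamma)$.

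To conclude, I would identify $G_{-\bar\kappa}^{*} g = R_\kappa g \oplus \gamma_0 R_\kappa g \oplus \gamma_1 R_\kappa g$ via the adjoint relations $\SL_\kappa^{*} = \gamma_0 R_{\bar\kappa}$, $\DL_\kappa^{*} = \gamma_1 R_{\bar\kappa}$, and compute
\begin{equation*}
(B_1 \oplus B_2) G_{-\bar\kappa}^{*} g = \bigl((v^2-1) 1_\Omega^{*} \Delta_\Omega^{\max} 1_\Omega R_\kappa g\bigr) \oplus (\widetilde\rho\, \gamma_1 R_\kappa g) \oplus 0,
\end{equation*}
whose restriction to $L^2(\Omega)$ on the first slot reads $(v^2-1) \Delta_\Omega^{\max} 1_\Omega R_\kappa g$, giving the right column vector of the statement. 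Dually, $G_\kappa$ restricted to inputs of the form $0 \oplus u_- \oplus \varphi \oplus 0$ produces $R_\kappa 1_\Omega^{*} u_- + \SL_\kappa \varphi$, which is the left row operator $\begin{bmatrix} R_\kappa 1_\Omega^{*} & \SL_\kappa \end{bmatrix}$. The middle matrix is exactly $1 - M_\kappa$ in the form written in \eqref{Mk}. The main obstacle is purely notational: tracking the three tiers of block decomposition and the correct interpretation of the adjoint $G_{-\bar\kappa}^{*}$; no new analytical input is required beyond Theorem \ref{equal} and Lemma \ref{ZB}.
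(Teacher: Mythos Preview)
Your proposal is correct and follows essentially the same route as the paper: combine Theorem~\ref{teo-res}, Theorem~\ref{equal}, and Lemma~\ref{ZB} to obtain the abstract resolvent identity, then reduce via the block-triangular decompositions \eqref{Qf} and \eqref{1-M}, exploiting that the $H^{1/2}(\Gamma)$-slot of $(B_1\oplus B_2)G_{-\bar\kappa}^*$ vanishes and that the $L^2(\Omega_{\+})$-slot of $B_1$ vanishes. Your identification of $G_{-\bar\kappa}^* g = R_\kappa g \oplus \gamma_0 R_\kappa g \oplus \gamma_1 R_\kappa g$ and the subsequent bookkeeping match the paper's computation line by line.
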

\begin{proof}
By \eqref{Qf} and \eqref{1-M},
\begin{align*}
&G_{\kappa}
\big(1-(B_{1}\oplus B_{2})\tau G_{\kappa}\big)^{-1}(B_{1}\oplus B_{2})G_{\bar\kappa}^{*}
\\
=&
\begin{bmatrix}R_{\kappa}& \SL_{\kappa}& \DL_{\kappa}
\end{bmatrix}\begin{bmatrix}\big(1-\widetilde M_{\kappa}\,\big)^{-1}&\big(1-M_{\kappa}\,\big)^{-1}V_{\kappa}\\
0&1
\end{bmatrix}\begin{bmatrix}\begin{bmatrix}({v^{2}}-1)1_{\Omega}^{*}\Delta^{\max}_{\Omega }1_{\Omega}R_{\kappa}\\ \widetilde\rho \,\gamma_{1}R_{\kappa}\end{bmatrix}\\0
\end{bmatrix}\\
=&
\begin{bmatrix}R_{\kappa}& \SL_{\kappa}
\end{bmatrix}\big(1-\widetilde M_{\kappa}\,\big)^{-1}\begin{bmatrix}({v^{2}}-1)1_{\Omega}^{*}\Delta^{\max}_{\Omega }1_{\Omega}R_{\kappa}\\ \widetilde\rho \,\gamma_{1}R_{\kappa}
\end{bmatrix}\\
=&
\begin{bmatrix}1_{\Omega_{\+}}R_{\kappa}1^{*}_{\Omega_{\+}}&
1_{\Omega}R_{\kappa}1^{*}_{\Omega}& \SL_{\kappa}
\end{bmatrix}
\begin{bmatrix}1&0\\
(1-M_{\kappa})^{-1}W_{\kappa}&(1-M_{\kappa})^{-1}
\end{bmatrix}\begin{bmatrix}0\\\begin{bmatrix}({v^{2}}-1)\Delta^{\max}_{\Omega }1_{\Omega}R_{\kappa}1_{\Omega}^{*}\\ \widetilde\rho \,\gamma_{1}R_{\kappa}
\end{bmatrix}\end{bmatrix}\\
=&
\begin{bmatrix}
R_{\kappa}1^{*}_{\Omega}& \SL_{\kappa}
\end{bmatrix}
(1-M_{\kappa})^{-1}\begin{bmatrix}({v^{2}}-1)\Delta^{\max}_{\Omega }1_{\Omega}R_{\kappa}\\ \widetilde\rho \,\gamma_{1}R_{\kappa}
\end{bmatrix}.
\end{align*}
The proof is then concluded by the definitions of $\widetilde\rho$ and $M_{k}$ in \eqref{B2} and \eqref{Mk}.
\end{proof}
\begin{remark}\label{not1} In the case $v_{\ell}\not=1$ and $p_{\ell}\not=1$ for any index $\ell$, Theorem \ref{RD} rewrites as 
\begin{align*}
&(-A_{v,\rho }-\kappa^{2})^{-1}-(-\Delta-\kappa^{2})^{-1}\\
=&\begin{bmatrix}R_{\kappa}1_{\Omega}^{*}& \SL_{\kappa}
\end{bmatrix}
\begin{bmatrix}v^{2}({v^{2}}-1)^{-1}+\kappa^{2}N_{\kappa}&\kappa^{2}1_{\Omega}\SL_{\kappa}\\-\gamma^{\-}_{1}N_{\kappa}&{\widetilde\rho\,}^{-1}-\gamma_{1}\SL_{\kappa}
\end{bmatrix}^{\!-1}
\begin{bmatrix}\Delta^{\max}_{\Omega }1_{\Omega}R_{\kappa}\\ \gamma_{1}R_{\kappa}
\end{bmatrix}
\end{align*}
\end{remark}
\subsection{Example: volume perturbations.}\label{vol-pert} Taking $\rho_{1}=\dots=\rho_{n}=1$  in Theorem \ref{RD}, one gets an only regular perturbation of $\Delta$ supported on $\Omega$:
\begin{align*}
&(-A_{v,1}-\kappa^{2})^{-1}\\=&R_{\kappa}
+\!\!\begin{bmatrix}R_{\kappa}1_{\Omega}^{*}& \SL_{\kappa}
\end{bmatrix}\!\!
\begin{bmatrix}v^{2}+(v^{2}-1)\kappa^{2}N_{\kappa}&(v^{2}-1)\kappa^{2}1_{\Omega}\SL_{\kappa}\\
0&1
\end{bmatrix}^{\!-1}\!\!
\begin{bmatrix}(v^{2}-1)\Delta^{\max}_{\Omega }1_{\Omega}R_{\kappa}\\ 0
\end{bmatrix}\\
=&R_{\kappa}\!\!
+\!\!\begin{bmatrix}R_{\kappa}1_{\Omega}^{*}& \SL_{\kappa}
\end{bmatrix}\!\!
\begin{bmatrix}(v^{2}+(v^{2}-1)\kappa^{2}N_{\kappa})^{-1}&
(v^{2}+(v^{2}-1)\kappa^{2}N_{\kappa})^{-1}(1-v^{2})\kappa^{2}1_{\Omega}\SL_{\kappa}\\
0&1
\end{bmatrix}\times\\
&\ \,\times\begin{bmatrix}(v^{2}-1)\Delta^{\max}_{\Omega }1_{\Omega}R_{\kappa}\\ 0
\end{bmatrix}
\\
=&R_{\kappa}+(v^{2}-1)R_{\kappa}1_{\Omega}^{*}(v^{2}+(v^{2}-1)\kappa^{2}N_{\kappa})^{-1}\Delta^{\max}_{\Omega }1_{\Omega}R_{\kappa}\,.
\end{align*}
\subsection{Example: surface perturbations.}\label{surf-pert} Taking $v_{1}=\dots=v_{n}=1$  in Theorem \ref{RD}, one gets an only singular perturbation of $\Delta$ supported on $\Gamma$:
\begin{align*}
&(-A_{1,\rho}-\kappa^{2})^{-1}=R_{\kappa}
+\!\!\begin{bmatrix}R_{\kappa}1_{\Omega}^{*}& \SL_{\kappa}
\end{bmatrix}\!\!
\begin{bmatrix}1&0\\
-\widetilde\rho\,\gamma^{\-}_{1}N_{\kappa}&1-\widetilde\rho\,\gamma_{1}\SL_{\kappa}
\end{bmatrix}^{\!-1}\!\!
\begin{bmatrix}0\\ \widetilde\rho\,\gamma_{1}R_{\kappa}
\end{bmatrix}\\
=&R_{\kappa}
+\!\!\begin{bmatrix}R_{\kappa}1_{\Omega}^{*}& \SL_{\kappa}
\end{bmatrix}\!\!
\begin{bmatrix}1&0\\
-\left(1-\widetilde\rho\,\gamma_{1}\SL_{\kappa}\right)^{-1}\widetilde\rho\,\gamma^{\-}_{1}N_{\kappa}&\left(1-\widetilde\rho\,\gamma_{1}\SL_{\kappa}\right)^{-1}
\end{bmatrix}\!\!
\begin{bmatrix}0\\ \widetilde\rho\,\gamma_{1}R_{\kappa}
\end{bmatrix}\\
=&R_{\kappa}
+ \SL_{\kappa}
\left(1-\widetilde\rho\,\gamma_{1}\SL_{\kappa}\right)^{-1}
\widetilde\rho\,\gamma_{1}R_{\kappa}\,.
\end{align*}
\begin{remark}\label{deco} By the decomposition
$$
L^{2}(\Omega)\oplus H^{-1/2}(\Gamma)=
L^{2}(\Omega_{1})\oplus \dots \oplus L^{2}(\Omega_{n}) \oplus H^{-1/2}(\Gamma_{1})\oplus \dots \oplus H^{-1/2}(\Gamma_{n})\,,
$$
denoting by $\SL_{k}^{(\ell)}$ the single layer operator for $\Gamma_{\ell}$ and introducing the notation
$$
\text{vect}(L_{\ell}):=\begin{bmatrix}L_{1}\\L_{2}\\ \vdots \\L_{n}\end{bmatrix},\qquad
\text{vect}(L_{\ell})^{\top}:=\begin{bmatrix}L_{1}&L_{2}&\dots &L_{n}\end{bmatrix},
$$
$$
\text{mat}(L_{\ell m}):=
 \begin{bmatrix}
    L_{11} & L_{12} & \dots & L_{1n} \\
    L_{21} & L_{22} & \dots & L_{2n} \\
    \vdots & \vdots & \ddots & \vdots \\
    L_{n1} & L_{n2} & \dots & L_{nn} \,
  \end{bmatrix},
$$
the resolvent difference formula in Theorem \ref{RD} rewrites as
\begin{align*}
&(-A_{v,\rho }-\kappa^{2})^{-1}-R_{\kappa}=\begin{bmatrix}
\text{vect}(R_{\kappa}1_{\Omega_{\ell}}^{*})^{\top}&\text{vect}(\SL_{\kappa}^{(\ell)})^{\top}
\end{bmatrix}\times\\
&\times
\begin{bmatrix}\text{mat}(v_{\ell}^{2}+({v_{\ell}^{2}}-1)\kappa^{2}1_{\Omega_{\ell}}R_{\kappa}1_{\Omega_{m}}^{*})&
\text{mat}(({v_{\ell}^{2}}-1)\kappa^{2}1_{\Omega_{\ell}}\SL^{(m)}_{\kappa})\\
-\text{mat}(\widetilde\rho_{\ell}
\,\gamma_{1,\ell}R_{\kappa}1_{\Omega_{m}}^{*})&\text{mat}(1-\widetilde\rho_{\ell}\,\gamma_{1,\ell}\SL^{(m)}_{\kappa})
\end{bmatrix}^{\!-1}
\begin{bmatrix}\text{vect}(({v_{\ell}^{2}}-1)\Delta^{\max}_{\Omega_{\ell} }1_{\Omega_{\ell}}R_{\kappa})\\ 
\text{vect}(\widetilde\rho_{\ell}\,\gamma_{1,\ell}R_{\kappa})
\end{bmatrix}.
\end{align*} 
By Remark \ref{not1}, whenever $v_{\ell}\not=1$ and $p_{\ell}\not=1$ for any index $\ell$, one gets
\begin{align*}
&(-A_{v,\rho }-\kappa^{2})^{-1}-R_{\kappa}\\=&\begin{bmatrix}
\text{vect}(R_{\kappa}1_{\Omega_{\ell}}^{*})^{\top}&\text{vect}(\SL_{\kappa}^{(\ell)})^{\top}
\end{bmatrix}\times\\
&\times
\begin{bmatrix}\text{mat}(v_{\ell}^{2}({v_{\ell}^{2}}-1)^{-1}+\kappa^{2}1_{\Omega_{\ell}}R_{\kappa}1_{\Omega_{m}}^{*})&
\text{mat}(\kappa^{2}1_{\Omega_{\ell}}\SL^{(m)}_{\kappa})\\
-\text{mat}(\gamma_{1,\ell}R_{\kappa}1_{\Omega_{m}}^{*})&\text{mat}({\widetilde\rho_{\ell}}^{\,\,-1}-\gamma_{1,\ell}\SL^{(m)}_{\kappa})
\end{bmatrix}^{\!-1}
\begin{bmatrix}\text{vect}(\Delta^{\max}_{\Omega_{\ell} }1_{\Omega_{\ell}}R_{\kappa})\\ 
\text{vect}(\gamma_{1,\ell}R_{\kappa})
\end{bmatrix}.
\end{align*} 
\end{remark}
\subsection{Example: the case of many equal connected components}\label{cup} Let us suppose that 
$$
\rho_{1}=\dots=\rho_{n}=\rho\,,\qquad v_{1}=\dots=v_{n}=v\,,
$$
\be\label{On}
\Omega=\Omega_{1}\cup\dots\cup\Omega_{n}\,,\qquad \Omega_{\ell}=y_{\ell}+\Omega_{\circ}\,,
\ee 
where the $y_{\ell}$'s are such that $\overline \Omega_{\ell}\cap\overline\Omega_{m}=\varnothing$ whenever $\ell\not=m$.\par
Introducing the maps $\varphi_{\ell}(x):=y_{\ell}+x$, such that $\varphi_{\ell}(\Omega_{\circ})=\Omega_{\ell}$, we define
$$
\Psi_{\ell}:L^{2}(\RE^{3})\to L^{2}(\RE^{3})\,,\qquad 
\Psi_{\ell}u:=u\circ\varphi_{\ell}
$$
$$
\Phi_{\ell}:L^{2}(\Omega_{\ell})\to L^{2}(\Omega_{\circ})\,,\qquad 
\Phi_{\ell}u:=u\circ(\varphi_{\ell}|\Omega_{\circ})
$$ 
and 
$$
\Phi^{\partial}_{\ell}:L^{2}(\Gamma_{\ell})\to L^{2}(\Gamma_{\circ})\,,\qquad 
\Phi^{\partial}_{\ell}\phi:=\phi\circ(\varphi_{\ell}|\Gamma_{\circ})\,,\qquad \Gamma_{\circ}:=\partial\Omega_{\circ}\,.
$$
One has the identities
$$
1_{\Omega_{\ell}}=\Phi_{\ell}^{-1}1_{\Omega_{\circ}}\Psi_{\ell}\,,
\qquad
1^{*}_{\Omega_{\ell}}=\Psi_{\ell}^{-1}1^{*}_{\Omega_{\circ}}\Phi_{\ell}\,,
$$
$$
\Delta^{\max}_{\Omega_{\ell}}=\Phi_{\ell}^{-1}\Delta^{\max}_{\Omega_{\circ}}\Phi_{\ell}\,,
\qquad
R_{\kappa}=\Psi_{\ell}^{-1}R_{\kappa}\Psi_{\ell}\,,
$$
$$
\SL^{(\ell)}_{\kappa}=\Psi_{\ell}^{-1}\SL^{\circ}_{\kappa}\Phi^{\partial}_{\ell}\,,
\qquad
\gamma_{1}^{(\ell)}=(\Phi^{\partial}_{\ell})^{-1}\gamma^{\circ}_{1}\Psi_{\ell}\,.
$$
Here, $ \SL^{\circ}_{\kappa}$ and $\gamma^{\circ}_{1}$ denote the operators corresponding to the pivot domain $\Omega_{\circ}$. 
Introducing the notation
$$
\text{diag}(L_{\ell}):=\begin{bmatrix}
    L_{1}& 0& \dots & 0 \\
    0& L_{2}& \dots & 0 \\
    \vdots & \vdots & \ddots & \vdots \\
   0 & 0 & \dots & L_{n}\,
  \end{bmatrix},
$$
the above relations give
\begin{align*}
&\begin{bmatrix}\text{vect}(R_{\kappa}1^{*}_{\Omega_{\ell}})^{\top}&
\text{vect}(\SL^{(\ell)}_{\kappa})^{\top}\end{bmatrix}
=\begin{bmatrix}\text{vect}(\Psi_{\ell}^{-1}R_{\kappa}1^{*}_{\Omega_{\circ}}\Phi_{\ell})^{\top}&
\text{vect}(\Psi_{\ell}^{-1}\SL^{\circ}_{\kappa}\Phi^{\partial}_{\ell})^{\top}\,\end{bmatrix}\\
=&
\begin{bmatrix}\text{diag}(\Psi_{\ell})&0\\
0&\text{diag}(\Psi_{\ell})
\end{bmatrix}^{-1}
\begin{bmatrix}\text{vect}(R_{\kappa}1^{*}_{\Omega_{\circ}})^{\top}&
\text{vect}(\SL^{\circ}_{\kappa})^{\top}\,\end{bmatrix}
\begin{bmatrix}\text{diag}(\Phi_{\ell})&0\\
0&\text{diag}(\Phi^{\partial}_{\ell})
\end{bmatrix},
\end{align*}
\begin{align*}
&\begin{bmatrix}(v^{2}-1)\text{vect}(\Delta^{\max}_{\Omega_{\ell}}1_{\Omega_{\ell}}R_{\kappa})\\ 2\,\frac{\rho-1}{\rho+1}\,\text{vect}(\gamma_{1}^{(\ell)}R_{\kappa})
\end{bmatrix}=
\begin{bmatrix}({v^{2}}-1)\text{vect}(\Phi_{\ell}^{-1}\Delta^{\max}_{\Omega_{\circ}}1_{\Omega_{\circ}}R_{\kappa}\Psi_{\ell})\\
 2\,\frac{\rho-1}{\rho+1}\,\text{vect}((\Phi^{\partial}_{\ell})^{-1}\gamma^{\circ}_{1}R_{\kappa}\Psi_{\ell})\end{bmatrix}\\
=&\begin{bmatrix}\text{diag}(\Phi_{\ell})&0\\
0&\text{diag}(\Phi^{\partial}_{\ell})
\end{bmatrix}^{-1}
\begin{bmatrix}(v^{2}-1)\,\text{vect}(\Delta^{\max}_{\Omega_{\circ}}1_{\Omega_{\circ}}R_{\kappa})\\
 2\,\frac{\rho-1}{\rho+1}\,\text{vect}(\gamma^{\circ}_{1}R_{\kappa})\end{bmatrix}
\begin{bmatrix}\text{diag}(\Psi_{\ell})&0\\
0&\text{diag}(\Psi_{\ell})
\end{bmatrix},
\end{align*}
\begin{align*}
&\begin{bmatrix}\text{mat}(v^{2}+({v^{2}}-1)\kappa^{2}1_{\Omega_{\ell}}R_{\kappa}1^{*}_{\Omega_{m}})&
\text{mat}(({v^{2}}-1)\kappa^{2}1_{\Omega_{\ell}}\SL_{\kappa}^{(m)})
\\
2\,\frac{1-\rho}{1+\rho}\,\text{mat}(\gamma_{1}^{(\ell)\-}1_{\Omega_{\ell}}R_{\kappa}1_{\Omega_{m}}^{*})&\text{mat}(1+2\,\frac{1-\rho}{1+\rho}\,\gamma_{1}^{(\ell)}\SL_{\kappa}^{(m)})
\end{bmatrix}\\
=&\begin{bmatrix}\text{mat}(v^{2}+({v^{2}}-1)\kappa^{2}\Phi_{\ell}^{-1}1_{\Omega_{\circ}}\Psi_{\ell}\Psi^{-1}_{m}R_{\kappa}1_{\Omega_{\circ}}^{*}\Phi_{m})&
(v^{2}-1)\kappa^{2}
\text{mat}(\Phi_{\ell}^{-1}1_{\Omega_{\circ}}\Psi_{\ell}\Psi_{m}^{-1}\SL^{\circ}_{\kappa}\Phi^{\partial}_{m})
\\
2\,\frac{1-\rho}{1+\rho}\,\text{mat}((\Phi^{\partial}_{\ell})^{-1}\gamma^{\circ\-}_{1}1_{\Omega_{\circ}}\Psi_{\ell}\Psi^{-1}_{m}R_{\kappa}1_{\Omega_{\circ}}^{*}\Phi_{m})&\text{mat}(1+2\,\frac{1-\rho}{1+\rho}\,(\Phi^{\partial}_{\ell})^{-1}\gamma^{\circ}_{1}\Psi_{\ell}\Psi_{m}^{-1}\SL^{\circ}_{\kappa}\Phi^{\partial}_{m})
\end{bmatrix}
\\
=&\begin{bmatrix}\text{diag}(\Phi_{\ell})&0\\
0&\text{diag}(\Phi^{\partial}_{\ell})
\end{bmatrix}^{-1}
\times\\&\times
\begin{bmatrix}\text{mat}(v^{2}+({v^{2}}-1)\kappa^{2}1_{\Omega_{\circ}}\Psi_{\ell m}R_{\kappa}1_{\Omega_{\circ}}^{*})&({v^{2}}-1)\kappa^{2}
\text{mat}(1_{\Omega_{\circ}}\Psi_{\ell m}\SL^{\circ}_{\kappa})
\\
2\,\frac{1-\rho}{1+\rho}\,\text{mat}(\gamma^{\circ\-}_{1}1_{\Omega_{\circ}}\Psi_{\ell m}R_{\kappa}1_{\Omega_{\circ}}^{*})&\text{mat}(1+2\,\frac{1-\rho}{1+\rho}\,
\gamma^{\circ}_{1}\Psi_{\ell m}\SL^{\circ}_{\kappa})
\end{bmatrix}
\times\\&\times
\begin{bmatrix}\text{diag}(\Phi_{m})&0\\
0&\text{diag}(\Phi^{\partial}_{m})
\end{bmatrix}
\end{align*}
where $\Psi_{\ell m}:=\Psi_{\ell}\Psi_{m}^{-1}$ acts as $\Psi_{\ell}$, replacing $\varphi_{\ell}$ with $\varphi_{\ell m}(x):=x+(y_{\ell}-y_{m})$.\par
 Therefore, by Theorem \ref{RD} and  Remark \ref{deco},
 \begin{align*}
&(-A_{v,\rho }-\kappa^{2})^{-1}-R_{\kappa}
=\begin{bmatrix}\text{diag}(\Psi_{\ell})&0\\
0&\text{diag}(\Psi_{\ell})
\end{bmatrix}^{-1}\begin{bmatrix}\text{vect}( R_{\kappa}1^{*}_{\Omega_{\circ}})^{\top}&\text{vect}(\SL^{\circ}_{\kappa})^{\top} \,\end{bmatrix}\times\\
&\times
\begin{bmatrix}\text{mat}(v^{2}+({v^{2}}-1)\kappa^{2}1_{\Omega_{\circ}}\Psi_{\ell m}R_{\kappa}1_{\Omega_{\circ}}^{*})&({v^{2}}-1)\kappa^{2}
\text{mat}(1_{\Omega_{\circ}}\Psi_{\ell m}\SL^{\circ}_{\kappa})
\\
2\,\frac{1-\rho}{1+\rho}\,\text{mat}(\gamma^{\circ\-}_{1}1_{\Omega_{\circ}}\Psi_{\ell m}R_{\kappa}1_{\Omega_{\circ}}^{*})&\text{mat}(1+2\,\frac{1-\rho}{1+\rho}\,\gamma^{\circ}_{1}\Psi_{\ell m}\SL^{\circ}_{\kappa})
\end{bmatrix}
^{-1}\times\\
&\times\begin{bmatrix}(v^{2}-1)\,\text{vect}(\Delta^{\max}_{\Omega_{\circ}}1_{\Omega_{\circ}}R_{\kappa})\\
 2\,\frac{\rho-1}{\rho+1}\,\text{vect}(\gamma^{\circ}_{1}R_{\kappa})\end{bmatrix}
 \begin{bmatrix}\text{diag}(\Psi_{\ell})&0\\
0&\text{diag}(\Psi_{\ell})
\end{bmatrix}.
\end{align*}
By Remark \ref{not1}, whenever $v_{\ell}\not=1$ and $\rho_{\ell}\not=1$ for any index $\ell$, one gets
 \begin{align*}
&(-A_{v,\rho }-\kappa^{2})^{-1}-R_{\kappa}
=\begin{bmatrix}\text{diag}(\Psi_{\ell})&0\\
0&\text{diag}(\Psi_{\ell})
\end{bmatrix}^{-1}\begin{bmatrix}\text{vect}( R_{\kappa}1^{*}_{\Omega_{\circ}})^{\top}&\text{vect}(\SL^{\circ}_{\kappa})^{\top} \,\end{bmatrix}\times\\
&\times
\begin{bmatrix}\text{mat}(v^{2}({v^{2}}-1)^{-1}+\kappa^{2}1_{\Omega_{\circ}}\Psi_{\ell m}R_{\kappa}1_{\Omega_{\circ}}^{*})&\kappa^{2}
\text{mat}(1_{\Omega_{\circ}}\Psi_{\ell m}\SL^{\circ}_{\kappa})
\\
\text{mat}(\gamma^{\circ\-}_{1}1_{\Omega_{\circ}}\Psi_{\ell m}R_{\kappa}1_{\Omega_{\circ}}^{*})&\text{mat}(\frac12\,\frac{1+\rho}{1-\rho}-\gamma^{\circ}_{1}\Psi_{\ell m}\SL^{\circ}_{\kappa})
\end{bmatrix}
^{-1}\times\\
&\times\begin{bmatrix}\text{vect}(\Delta^{\max}_{\Omega_{\circ}}1_{\Omega_{\circ}}R_{\kappa})\\
 \text{vect}(\gamma^{\circ}_{1}R_{\kappa})\end{bmatrix}
 \begin{bmatrix}\text{diag}(\Psi_{\ell})&0\\
0&\text{diag}(\Psi_{\ell})
\end{bmatrix}.
\end{align*}
\section{The Spectrum}
Here, following the same kind of arguments as in \cite[Section 2]{ZAMP}, we study the spectrum of $A_{v,\rho }$.
\begin{lemma}\label{sp}
$\sigma_{p}(  A_{v,\rho })    =\varnothing$.
\end{lemma}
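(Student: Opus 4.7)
I would exploit the fact that $A_{v,\rho}\le 0$, so any eigenvalue $E$ is non-positive, $E=-\lambda^{2}$ with $\lambda\ge 0$, and split according to whether $\lambda=0$ or $\lambda>0$.

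For $\lambda=0$, the plan is to use the quadratic form $\mathscr Q$ introduced in the proof of Theorem \ref{acoustic}. If $A_{v,\rho}u=0$, then $\langle -A_{v,\rho}u,u\rangle_{L^{2}(\RE^{3}\!,b^{-1})}=\mathscr Q(u,u)=\langle\nabla u,\rho^{-1}\nabla u\rangle_{L^{2}(\RE^{3})}=0$; since $\rho^{-1}$ is bounded below by a positive constant, this forces $\nabla u=0$, so $u$ is constant on each connected component of $\RE^{3}\setminus\Gamma$. The only $L^{2}$ constant on the unbounded component $\Omega_{\+}$ is $0$, and the $H^{1}(\RE^{3})$ regularity imposed by $\mathscr D_{\rho}$ forces $\gamma_{0,\ell}^{\-}u=\gamma_{0,\ell}^{\+}u=0$ for each $\ell$, so $u\equiv 0$ on every $\Omega_{\ell}$ as well.

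For $\lambda>0$, assume $A_{v,\rho}u=-\lambda^{2}u$ with $u\in\mathscr D_{\rho}$. On $\Omega_{\+}$ the operator acts as the ordinary Laplacian, so $(-\Delta-\lambda^{2})u=0$ in $\Omega_{\+}$ and $u|_{\Omega_{\+}}\in L^{2}(\Omega_{\+})$. The classical Rellich uniqueness theorem for $L^{2}$-solutions of the Helmholtz equation in an exterior domain (see, e.g., Leis or \cite{McLe}) then implies that $u$ vanishes outside some large ball, and the weak unique continuation property for $-\Delta-\lambda^{2}$ on the connected open set $\Omega_{\+}$ yields $u\equiv 0$ on $\Omega_{\+}$. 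Consequently $\gamma_{0,\ell}^{\+}u=0$ and $\gamma_{1,\ell}^{\+}u=0$ for every $\ell$, and the transmission conditions in $\mathscr D_{\rho}$ give $\gamma_{0,\ell}^{\-}u=0$ and $\gamma_{1,\ell}^{\-}u=\rho_{\ell}\gamma_{1,\ell}^{\+}u=0$.

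Now on each $\Omega_{\ell}$ the equation reads $(-\Delta-(\lambda/v_{\ell})^{2})u=0$, with $u\in H^{1}(\Omega_{\ell})\cap L^{2}_{\Delta}(\Omega_{\ell})$ and both Dirichlet and Neumann interior traces vanishing on $\Gamma_{\ell}$. Extending $u|_{\Omega_{\ell}}$ by zero outside $\Omega_{\ell}$ produces, thanks to the vanishing Cauchy data and the jump relations, a distributional solution of $(-\Delta-(\lambda/v_{\ell})^{2})\widetilde u=0$ on a neighborhood of $\Omega_{\ell}$ that vanishes on an open set; weak unique continuation then forces $u\equiv 0$ on $\Omega_{\ell}$. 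Combining the two cases gives $\sigma_{p}(A_{v,\rho})=\varnothing$.

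The only delicate step is the exterior argument: applying Rellich's theorem and unique continuation rigorously in a Lipschitz exterior domain. This is standard but should be stated precisely; everything else reduces to algebra with the transmission conditions and the coercivity of $\mathscr Q$.
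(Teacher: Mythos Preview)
Your proposal is correct and follows essentially the same approach as the paper: the paper likewise splits into the cases $\lambda=0$ (handled via the quadratic form $\mathscr Q$ and $\|\nabla u\|_{L^{2}}=0$) and $\lambda<0$ (Rellich's estimate in the exterior, unique continuation, then vanishing Cauchy data forcing $u|\Omega=0$). The only minor difference is that for $\lambda=0$ the paper concludes directly from $u\in H^{1}(\RE^{3})$ and $\nabla u=0$ that $u=0$, whereas you argue via piecewise constants and trace matching; both are fine.
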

\begin{proof}
Since $A_{v,\rho }\le 0$, it suffices to show that $\sigma_{p}( A_{v,\rho } 
) \cap (-\infty,0]=\varnothing$. Given $\lambda< 0$, let $u_{\lambda}\in {\mathscr D}_{\rho}$
be such that $A_{v,\rho }u_{\lambda}=\lambda  u_{\lambda}$. By \eqref{op-incl1}, 
$$
(\Delta^{\max}_{\Omega_{\+}}-\lambda)(u_{\lambda}|\Omega_{\+})=0\,.
$$
By Rellich's estimate (see, e.g., \cite[Corollary 4.8]{Leis}), $u_{\lambda}|B^{c}=0$ for any open ball $B\supset\overline \Omega$; hence, by the unique continuation property (see, e.g., \cite[Section 4.3]{Leis}),  $u_{\lambda}|\Omega_{\+}=0$ and so $\gamma_{0}^{\+}u_{\lambda}=\gamma_{1}^{\+}u_{\lambda}=0$. Hence, by \eqref{bc}, 
\[
\begin{cases}
(    v^{2}\Delta^{\max}_{\Omega}-\lambda)    (u_{\lambda}|\Omega)=0\\
\gamma_{0}^{\-}(u_{\lambda}|\Omega)=\gamma_{1}^{\-}(u_{\lambda}|\Omega)=0\,.
\end{cases}
\]
The unique solution of such a boundary value problem is $u_{\lambda}|\Omega=0$. Therefore, $u_{\lambda}=0$.\par
As regards the case $\lambda=0$, let $u_{0}\in {\mathscr D}_{\rho}$
such that $A_{v,\rho}\,u_{0}=0$. Then, by \eqref{sesqui},
$$
0=\langle A_{v,\rho}\,u_{0},u_{0}\rangle_{L^{2}(\RE^{3},b^{-1})}={\mathscr Q}(u_{0},u_{0})\gtrsim
\|\nabla
u_{0}\|_{L^{2}(  \mathbb{R}^{3})  }^{2}\,.
$$ 
This entails $u_{0}=0$.  
\end{proof}
\begin{lemma}
\label{Lemma_ess}$\sigma_{ess}( A_{v,\rho })    =(-\infty, 0]    $.
\end{lemma}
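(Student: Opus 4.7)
The statement decomposes into two inclusions, and one of them is free. Indeed, Theorem~\ref{acoustic} gives $A_{v,\rho}\le 0$ in $L^{2}(\mathbb{R}^{3},b^{-1})$, so $\sigma(A_{v,\rho})\subseteq(-\infty,0]$ and in particular $\sigma_{ess}(A_{v,\rho})\subseteq(-\infty,0]$.

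For the reverse inclusion I plan to build, for each $\lambda\le 0$, an explicit singular (Weyl) sequence exploiting the fact that $A_{v,\rho}$ coincides with the free Laplacian on the complement of the bounded set $\overline{\Omega}$. Fix $\xi_{0}\in\mathbb{R}^{3}$ with $|\xi_{0}|^{2}=-\lambda$, a bump $\phi\in C_{c}^{\infty}(\mathbb{R}^{3})$ with $\|\phi\|_{L^{2}}=1$, and centres $x_{n}\in\mathbb{R}^{3}$ with $|x_{n}|\to\infty$ fast enough that $x_{n}+n\,\mathrm{supp}(\phi)$ stays disjoint from $\overline{\Omega}$ for every sufficiently large $n$. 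Set
\[
u_{n}(x):= n^{-3/2}\,\phi\!\left(\tfrac{x - x_{n}}{n}\right) e^{i\xi_{0}\cdot x}.
\]
Smoothness and vanishing of $u_{n}$ on a neighbourhood of $\Gamma$ make the transmission conditions in \eqref{bc} automatic, so $u_{n}\in\mathscr{D}_{\rho}$; since $b\equiv 1$ on the support of $u_{n}$, one has $\|u_{n}\|_{L^{2}(\mathbb{R}^{3},b^{-1})}=1$; since the supports escape to infinity and weak convergence in $L^{2}(\mathbb{R}^{3},b^{-1})$ coincides with weak convergence in $L^{2}(\mathbb{R}^{3})$ (the two norms being equivalent), one has $u_{n}\rightharpoonup 0$; and finally $(A_{v,\rho}-\lambda)u_{n}=(\Delta-\lambda)u_{n}=O(n^{-1})$ in $L^{2}$ by a routine computation of $\Delta u_{n}$, in which the cross-term $\xi_{0}\cdot\nabla\phi$ contributes the $n^{-1}$ factor. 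This yields a singular Weyl sequence at $\lambda$ and forces $\lambda\in\sigma_{ess}(A_{v,\rho})$.

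No serious obstacle is anticipated: the argument reduces everything to the standard fact $\sigma(\Delta)=(-\infty,0]$ for the free Laplacian on $L^{2}(\mathbb{R}^{3})$, together with the trivial observation that cutoffs supported away from $\overline{\Omega}$ lie automatically in $\mathscr{D}_{\rho}$. An alternative strategy would use the resolvent-difference formula of Theorem~\ref{RD}, noting that its trace and volume factors are compact (via $H^{1/2}(\Gamma)\hookrightarrow H^{-1/2}(\Gamma)$ and $H^{2}(\Omega)\hookrightarrow L^{2}(\Omega)$), to obtain a compact resolvent difference and then invoke Weyl's theorem on invariance of the essential spectrum. The direct Weyl-sequence construction is preferred because it operates inside $L^{2}(\mathbb{R}^{3},b^{-1})$, where $A_{v,\rho}$ is self-adjoint, and avoids the bookkeeping of the identification between the two equivalent Hilbert spaces.
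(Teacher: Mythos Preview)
Your proof is correct and takes a genuinely different route from the paper's. The paper argues via Weyl's theorem on invariance of the essential spectrum: it shows that each factor $R_{\kappa}1_{\Omega}^{*}$ and $\SL_{\kappa}$ appearing in the resolvent formula of Theorem~\ref{RD} is compact (using the embeddings $H^{2}(\Omega)\hookrightarrow L^{2}(\Omega)$ and $H^{-1/2}(\Gamma)\hookrightarrow B^{-3/2}_{2,2}(\Gamma)$), hence $(-A_{v,\rho}-\kappa^{2})^{-1}-R_{\kappa}$ is compact and $\sigma_{ess}(A_{v,\rho})=\sigma_{ess}(\Delta)=(-\infty,0]$. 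Your direct Weyl-sequence construction is more elementary and, as you note, works entirely inside $L^{2}(\mathbb{R}^{3},b^{-1})$, so it sidesteps the (minor but real) bookkeeping that $A_{v,\rho}$ and $\Delta$ are self-adjoint in different, though equivalent, Hilbert spaces---a point the paper's argument passes over silently. Conversely, the paper's approach reuses machinery already in place (the resolvent formula) and yields the compactness of the resolvent difference as a byproduct, which is independently useful elsewhere (e.g.\ in the LAP argument of Theorem~\ref{Theorem_LAP}). Both arguments are standard; yours is self-contained, the paper's is more in keeping with the structural role of the $Q$-function in the rest of the work.
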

\begin{proof} 
By\footnote{The Besov-like space $B^{s}_{2,2}(\Gamma)$, $s>0$, denotes the Dirichlet trace space relative to $H^{s+1/2}(\Omega)$; its dual space is denoted by $B^{-s}_{2,2}(\Gamma)$. Whenever $0<s\le1$, $B^{\pm s}_{2,2}(\Gamma)$ coincides with $H^{\pm s}(\Gamma)$. See, e.g., \cite{Trib} for more details. } $\SL_{\kappa}\in\B(B^{-3/2}_{2,2}(\Gamma), L^{2}(\RE^{3}))$ and by the compact embedding $H^{-1/2}(\Gamma)\hookrightarrow B^{-3/2}_{2,2}(\Gamma)$, one has $\SL_{\kappa}\in\mathfrak{S}_{\infty}(H^{-1/2}(\Gamma), L^{2}(\RE^{3}))$. By $R_{\kappa}\in\B(L^{2}(\RE^{3}), H^{2}(\RE^{3}))$ and by the compact embedding $H^{2}(\Omega)\hookrightarrow L^{2}(\Omega) $, one has $1_{\Omega}R_{\kappa}\in \mathfrak{S}_{\infty}(L^{2}(\RE^{3}), L^{2}(\Omega))$; then, by duality, 
$R_{\kappa}1_{\Omega}^{*}\in \mathfrak{S}_{\infty}(L^{2}(\Omega), L^{2}(\RE^{3})) $. 
By the resolvent difference formula in Theorem \ref{RD}, $(-A_{v,\rho }-\kappa^{2})^{-1}-R_{\kappa}$ is compact as well.  
Therefore, the Weyl theorem applies (see, e.g., \cite[Th.
XIII.14]{ReSi IV}) and $\sigma_{ess}( A_{v,\rho})    =\sigma_{ess}(  \Delta)=(-\infty,0]$.
\end{proof}
\subsection{A limiting absorption principle} \hfill
\vskip5pt\par\noindent
Here, we prove that there is no singular continuous spectrum, so that  the spectrum of  $A_{v,\rho}$ is purely absolutely continuous. To this end, arguing as in \cite[Section 2.1]{ZAMP}, we at first show that a limiting absorption principle holds for   $A_{v,\rho}$. As it is well known, a limiting absorption principle holds for the free Laplacian (see, e.g., \cite[Section 4]{Agm}):\par
For any $\lambda\in\RE\backslash\{0\}$ and for any $\alpha>{\frac12}$, the  limits 
\begin{equation}
\lim_{\delta\searrow0}(-\Delta+(\lambda\pm i\delta))^{-1}
\label{lim1}%
\end{equation}
exist in $\B(L^{2}_{\alpha}(\RE^{3}),L^{2}_{-\alpha}(
\mathbb{R}^{3}) )$; the same hold true in the case $\lambda=0$ whenever $\alpha>3/2$.
Here, $L^{2}_{\alpha}(\mathbb{R}^{3})$ denotes the weighted $L^{2}$-spaces with weight $w(x)=(1+|x|^{2})^{\alpha}$ (see \cite[Section 2]{Agm} for more details).
As regards $A_{v,\rho}$, the result is of the same kind:
\begin{theorem}
\label{Theorem_LAP} The limits%
\begin{equation}
\widetilde{\mathcal R}^{\pm}_{\lambda}:=
\lim_{\delta\searrow0}(-A_{v,\rho}+\lambda\pm i\delta)^{-1}
\,, \label{LAP_eps}%
\end{equation}
exist in ${{\B }}(    L^{2}_{\alpha}(  \mathbb{R}^{3})
,L^{2}_{-\alpha}(  \mathbb{R}^{3})  )    $ for any $\alpha>1/2$ and any $\lambda\in\mathbb{R}\backslash\{0\}$; the same hold true in the case $\lambda=0$ whenever $\alpha>3/2$. 
Furthermore, the ${{\B }}(    L^{2}_{\alpha}(  \mathbb{R}^{3})
,L^{2}_{-\alpha}(  \mathbb{R}^{3})  )    $-valued extended resolvents 
$$
z\mapsto
\widetilde{\mathcal R}^{\pm}_{z}:=
\begin{cases}(-A_{v,\rho}+z)^{-1}&z\in\CO\backslash(-\infty,0]\\
\widetilde{\mathcal R}^{\pm}_{\lambda}&z=\lambda\in(-\infty,0]
\end{cases}
$$
are continuous   on $\CO_{\pm}\cup\RE$.
\end{theorem}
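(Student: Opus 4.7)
The starting point is the resolvent difference formula of Theorem \ref{RD}. Since the recalled free LAP provides the continuous extension of $R_{\kappa}$ from $\CO_{+}$ to $\CO_{+}\cup\RE$ in $\B(L^{2}_{\alpha},L^{2}_{-\alpha})$, the task reduces to establishing a continuous extension, in the same operator norm, of the correction
\[
\mathcal C_\kappa:=\begin{bmatrix}R_\kappa 1_\Omega^{*} & \SL_\kappa\end{bmatrix}Q_\kappa^{-1}\begin{bmatrix}(v^{2}-1)\Delta^{\max}_\Omega 1_\Omega R_\kappa\\ \widetilde\rho\,\gamma_1 R_\kappa\end{bmatrix}.
\]
I would split this into the continuity of each factor separately, together with uniform invertibility of the central $Q$-factor up to the real axis; the $\CO_{-}$ case is entirely analogous.

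For the outer factors, compactness of $\overline\Omega$ and of $\Gamma$ is what makes the weighted LAP of the free Laplacian propagate through. Indeed, $1_\Omega^{*}\colon L^{2}(\Omega)\hookrightarrow L^{2}_{\alpha}(\RE^{3})$ is bounded for every $\alpha$, so the quoted free LAP gives the continuous extension of $R_\kappa 1_\Omega^{*}\colon L^{2}(\Omega)\to L^{2}_{-\alpha}(\RE^{3})$; dually, $(v^{2}-1)\Delta^{\max}_\Omega 1_\Omega R_\kappa\colon L^{2}_{\alpha}(\RE^{3})\to L^{2}(\Omega)$ inherits the continuous extension via $\Delta^{\max}_\Omega\in\B(H^{2}(\Omega),L^{2}(\Omega))$ applied to the $H^{2}_{\mathrm{loc}}$ image of the free resolvent. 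The single-layer operator $\SL_\kappa\colon H^{-1/2}(\Gamma)\to L^{2}_{-\alpha}(\RE^{3})$ and the trace $\widetilde\rho\,\gamma_1 R_\kappa\colon L^{2}_{\alpha}(\RE^{3})\to H^{-1/2}(\Gamma)$ depend continuously on $\kappa\in\overline{\CO_{+}}$ thanks to the analytic dependence on $\kappa\in\CO$ of the free kernel $e^{i\kappa|x-y|}/(4\pi|x-y|)$ and the standard mapping properties of layer potentials and traces against sources localised on the compact sets $\Omega$ and $\Gamma$.

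The hard part is the central factor. From \eqref{acQ} and \eqref{Mk} one reads off the decomposition
\[
Q_\kappa=(v^{2}\oplus 1)+K_\kappa,
\]
where $v^{2}\oplus 1$ is a bounded invertible multiplication operator on $L^{2}(\Omega)\oplus H^{-1/2}(\Gamma)$ and $K_\kappa$ is compact, since the entries $N_\kappa$, $1_\Omega\SL_\kappa$, $\gamma_1^{\-}N_\kappa$ and $\gamma_1\SL_\kappa$ are all compact in the relevant spaces (the same smoothing already used in Lemma \ref{Lemma_ess}), and $\kappa\mapsto K_\kappa$ is norm continuous on $\overline{\CO_{+}}$ by the analyticity of the free kernel. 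By the analytic Fredholm alternative it is then enough to prove $\ker Q_{\kappa_{0}}=\{0\}$ for every $\kappa_{0}\in\RE\setminus\{0\}$ in order to continuously extend $\kappa\mapsto Q_\kappa^{-1}$ to $\overline{\CO_{+}}\setminus\{0\}$. Given $(f_{1},f_{2})\in\ker Q_{\kappa_{0}}$, the function $u:=R_{\kappa_{0}}1_\Omega^{*}g_{1}+\SL_{\kappa_{0}}g_{2}$ built via the recipe of \cite[Lemmata 2.4 and 2.5]{JST} is an outgoing Helmholtz solution on $\Omega_{\+}$ satisfying the transmission conditions \eqref{bc}; Rellich's estimate and the unique continuation argument already exploited in Lemma \ref{sp} force $u\equiv 0$ on $\Omega_{\+}$, and the interior boundary value problem then annihilates $u|\Omega$ as well, giving $(f_{1},f_{2})=0$.

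Combining the three continuity statements yields the continuous extension of $\mathcal C_\kappa$, hence the existence of the limits $\widetilde{\mathcal R}^{\pm}_{\lambda}$ for every $\lambda\in\RE\setminus\{0\}$ in $\B(L^{2}_{\alpha},L^{2}_{-\alpha})$. Joint continuity of $z\mapsto\widetilde{\mathcal R}^{\pm}_{z}$ on $\CO_{+}\cup\RE$ (respectively on $\CO_{-}\cup\RE$) follows by combining the analyticity of the resolvent on $\CO\setminus(-\infty,0]$ with the boundary continuity just established, exactly along the abstract lines of \cite{Ren1}. The $\lambda=0$ case is strictly parallel: the threshold $\alpha>3/2$ is dictated by the free LAP at zero energy, the compactness of the entries of $K_{0}$ persists unchanged, and the invertibility of $Q_{0}$ was already checked in closed form inside the proof of Lemma \ref{ZB}, so the same Fredholm argument delivers continuous extension at the origin in the stronger weighted topology.
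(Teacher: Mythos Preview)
Your route is genuinely different from the paper's. The paper does not touch the individual factors in the Kre\u{\i}n formula at all; it verifies the abstract hypotheses of Renger \cite{Ren1}: boundedness of both resolvents on $L^{2}_{\alpha}$, compactness of the \emph{difference} $\widetilde{\mathcal R}_{z}-\mathcal R_{z}$ from $L^{2}$ into $L^{2}_{\beta}$ with $\beta>2\alpha$, the Ben--Artzi--Devinatz estimate for the free resolvent, and the absence of eigenvalues (Lemma~\ref{sp}). Renger's Theorem 3.5 then delivers LAP directly. In particular the paper never needs to know that $\ker Q_{\kappa_{0}}=\{0\}$ on the real axis; on the contrary, that fact (Lemma~\ref{res}) is derived \emph{from} Theorem~\ref{Theorem_LAP}, so your logical flow is the reverse of the paper's.

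Your direct approach could be made to work, but as written it has a real gap. The decomposition $Q_{\kappa}=(v^{2}\oplus 1)+K_{\kappa}$ with $K_{\kappa}$ compact requires $\gamma_{1}\SL_{\kappa}\in\mathfrak S_{\infty}(H^{-1/2}(\Gamma))$, and this is \emph{false} for merely Lipschitz $\Gamma$; it is exactly the reason the paper imposes the $\mathcal C^{1,\alpha}$ hypothesis in Lemma~\ref{Fredh}, and it is not part of Lemma~\ref{Lemma_ess} (which only uses compactness of $\SL_{\kappa}\colon H^{-1/2}(\Gamma)\to L^{2}(\RE^{3})$ and of $R_{\kappa}1_{\Omega}^{*}$). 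Since Theorem~\ref{Theorem_LAP} is stated and proved in the Lipschitz setting, your Fredholm step fails there. A repair is to write $Q_{\kappa}=Q_{0}+(Q_{\kappa}-Q_{0})$, use the explicit invertibility of $Q_{0}$ from Lemma~\ref{ZB}, and note that $Q_{\kappa}-Q_{0}$ has smoother kernels (in particular $\gamma_{1}(\SL_{\kappa}-\SL_{0})$ is compact even for Lipschitz $\Gamma$). Separately, your kernel argument is not the one in Lemma~\ref{sp}: that lemma treats $L^{2}$ eigenfunctions, whereas here you must rule out an \emph{outgoing} solution of the transmission problem at real frequency, which requires the Rellich far-field uniqueness theorem and a boundary-pairing identity rather than the $L^{2}$ Rellich estimate; it is standard, but it is additional work beyond what you cite.
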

\begin{proof} For any $z\in\CO\backslash(-\infty,0]$, we use the abbreviated notation
$$
\widetilde{\mathcal R}_{z}\equiv(-A_{v,\rho}+z)^{-1}\,,\qquad
{\mathcal R}_{z}\equiv(-\Delta+z)^{-1}
$$
and define $\mathcal{S\!L}_{z}:=(\gamma_{0}{\mathcal R}_{\bar z})^{*}$. Notice that $\mathcal{S\!L}_{\kappa^{2}}=\SL_{\kappa}$.\par
By \cite[eqn. (4.8)]{JST18}, one has $\mathcal{R}_{z}\in \mathscr B(L_{\alpha }^{2}(\mathbb{R}^{3}),H_{\alpha }^{2}(\mathbb{R}^{3}))$ for any real $\alpha$. Then, by duality and interpolation  
\be\label{int}
\mathcal{R}_{z}\in \mathscr B(H_{\alpha }^{-s}(\mathbb{R}^{3}),H_{\alpha }^{2-s}(\mathbb{R}^{3}))\,,\qquad 0\le s\le 2\,.
\ee
In particular,
\be
\mathcal{R}_{z}\in \mathscr B(L_{\alpha }^{2}(\mathbb{R}^{3}))
\,.
\label{H1.1}
\ee
By \eqref{int},  $\mathcal{S\!L}_{z}\in \B(H^{-1/2}(\Gamma),L_{\alpha }^{2}(\mathbb{R}^{3}))$, and, by $1^{*}_{\Omega }\in \B(L^{2}(\Omega),L_{\alpha }^{2}(\mathbb{R}^{3}))$,  one obtains $
\mathcal{R}_{z}1^{*}_{\Omega }\in \mathscr B(L^{2}(\Omega),L_{\alpha }^{2}(\mathbb{R}^{3}))$. Thus, by the resolvent difference formula in Theorem \ref{RD},
\begin{equation}
\widetilde{\mathcal{R}}_{z}\in \mathscr B(L_{\alpha }^{2}(\mathbb{R}%
^{3}))\,,
\qquad \alpha \in 
\mathbb{R}\,.
\label{H1.2}
\end{equation}
Since $1_{\Omega}{\mathcal{R}}_{z}\in \B(L^{2}_{-\beta}(\RE^{3}), H^{2}(\Omega))$, by the compact embedding $H^{2}(\Omega)\hookrightarrow L^{2}(\Omega)$, one gets $1_{\Omega}{\mathcal{R}}_{z}\in {\mathfrak{S}}_{\infty }(L_{-\beta }^{2}(\mathbb{R}^{3}),L^{2}(\Omega))$ for any real $\beta $. Thus, by duality,  ${\mathcal{R}}_{z}1^{*}_{\Omega}\in {\mathfrak{S}}_{\infty }(L^{2}(\Omega),L_{\beta}^{2}(\mathbb{R}^{3}))$. By \eqref{int}, 
$\mathcal{S\!L}_{z}\in \B(B^{-3/2}_{2,2}(\Gamma), L^{2}_{\beta}(\RE^{3}))$ and by the compact embedding 
$H^{-1/2}(\Gamma)\hookrightarrow B^{-3/2}_{2,2}(\Gamma) $, one gets 
$\mathcal{S\!L}_{z}\in \mathfrak{S}_{\infty}(H^{-1/2}(\Gamma), L^{2}_{\beta}(\RE^{3}))$.
Hence, by the resolvent difference formula in Theorem \ref{RD},
\begin{equation}
\widetilde{\mathcal{R}}_{z}-\mathcal{R}_{z}\in {\mathfrak{S}}_{\infty
}(L^{2}(\mathbb{R}^{n}),L_{\beta }^{2}(\mathbb{R}^{3}))\,,\quad \beta
>2\alpha \,.
\label{H2}
\end{equation}
Furthermore, according to \cite[Corollary 5.7(b)]{BAD}, for all compact
subset $K\subset (0,+\infty )$ there exists a constant $c_{\nu}>0$ such that,
for $\lambda \in K$ one has 
\begin{equation}
\forall u\in L_{2\alpha }^{2}(\mathbb{R}^{n})\cap \ker (\mathcal{R}_{\lambda
}^{+}-\mathcal{R}_{\lambda }^{-}),\quad \Vert \mathcal{R}_{\lambda }^{\pm
}u\Vert _{L^{2}(\mathbb{R}^{3})}\leq c_{\nu}\Vert u\Vert _{L_{2\alpha }^{2}(%
\mathbb{R}^{3})}\,.  \label{H3}
\end{equation}
The relations \eqref{H1.1}-\eqref{H3} permit us to apply the abstract
results provided in \cite{Ren1}, where, with respect to the notations there, $\mathcal{H}_{1}=L^{2}(\mathbb{R}^{3})$, $\mathcal{H}_{2}=L^{2}(\RE^{3}\!,b^{-1} )$, $X=L_{\alpha }^{2}(\mathbb{R}^{3})$, $H_{1}=-\Delta $,  $H_{2}=-A_{v,\rho}$, $J_{1}$ the identity in $\mathcal{H}_{1}$, $J_{2}:\mathcal{H}%
_{1}\rightarrow \mathcal{H}_{2}$ the multiplication by $\sqrt{b}$. Hypothesis (T1) and (E1) in 
\cite[page 175]{Ren1} corresponds to our \eqref{lim1}, \eqref{H3} and %
\eqref{H2} respectively; then, by \cite[Proposition 4.2]{Ren1}, the latter
imply hypotheses (LAP) and (E) in \cite[page 166]{Ren1}, i.e. \eqref{lim1}
again and 
\begin{equation*}
J_{2}^{\ast }\widetilde{\mathcal{R}}_{z}J_{2}-\mathcal{R}_{z }\in {\mathfrak{S}}%
_{\infty }(L_{-\alpha }^{2}(\mathbb{R}^{3}),L_{\alpha }^{2}(\mathbb{R}%
^{3}))\,,
\end{equation*}
and hypothesis (T) in \cite[page 168]{Ren1}, a technical variant of %
\eqref{H3}. By \cite[Theorem 3.5]{Ren1}, these last hypotheses, together
with $-A_{v,\rho}\geq 0$ and \eqref{H1.1}-\eqref{H1.2} (i.e. hypothesis
(OP) in \cite[page 165]{Ren1}), give the existence of the limits \eqref{LAP_eps} outside $\sigma_{p}(-A_{v,\rho})$; the latter is empty by Lemma \ref{sp}.
\end{proof}
By the obvious relation
$$
\CO_{+}\ni\kappa\mapsto(-A_{v,\rho}-\kappa^{2})^{-1}=
\big(-A_{v,\rho}-\text{Re}(\kappa)^{2}+\text{Im}(\kappa)^{2}-i2\text{Re}(\kappa)\text{Im}(\kappa)\big)^{-1}\,,
$$
Theorem \ref{Theorem_LAP} implies
\begin{corollary}\label{pseudo-LAP} The  ${{\B }}(    L^{2}_{\alpha}(  \mathbb{R}^{3})
,L^{2}_{-\alpha}(  \mathbb{R}^{3})  )    $-valued map
$$
\kappa\mapsto \widetilde R_{\kappa}:=
\begin{cases}(-A_{v,\rho}-\kappa^{2})^{-1}&\text{\rm Im}(\kappa)>0\\
\widetilde {\mathcal R}^{\pm}_{-\kappa^{2}}&\text{\rm Im}(\kappa)=0\,,\ \mp\text{\rm Re}(\kappa)>0
\end{cases}
$$
is continuous  on $\CO_{+}\cup\RE\backslash\{0\}$.
\end{corollary}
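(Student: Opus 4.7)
The plan is to reduce the continuity of $\kappa\mapsto\widetilde R_{\kappa}$ to the already established continuity of the $\widetilde{\mathcal R}^{\pm}$-branches (Theorem \ref{Theorem_LAP}) via the holomorphic change of variable $z(\kappa):=-\kappa^{2}$, as suggested by the displayed identity preceding the corollary. Writing $\kappa=a+ib$, one has $z(\kappa)=b^{2}-a^{2}-2iab$, so that $\text{\rm Im}\,z(\kappa)=-2ab$. This immediately yields, for $\kappa\in \CO_{+}\cup\RE\setminus\{0\}$, a clean trichotomy: $z(\kappa)\in\CO_{-}\cup(-\infty,0)$ whenever $\text{\rm Re}(\kappa)>0$, $z(\kappa)\in\CO_{+}\cup(-\infty,0)$ whenever $\text{\rm Re}(\kappa)<0$, and $z(\kappa)\in(0,+\infty)$ on the imaginary semiaxis $\{ib:b>0\}$.

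Next, I would verify the identifications
\begin{equation*}
\widetilde R_{\kappa}=\widetilde{\mathcal R}^{-}_{z(\kappa)}\text{ for }\text{\rm Re}(\kappa)>0\,,\qquad \widetilde R_{\kappa}=\widetilde{\mathcal R}^{+}_{z(\kappa)}\text{ for }\text{\rm Re}(\kappa)<0\,,
\end{equation*}
each valid on the corresponding closed half of $\CO_{+}\cup\RE\setminus\{0\}$. For $\kappa\in\CO_{+}$ both identities merely reproduce $\widetilde R_{\kappa}=(-A_{v,\rho}+z(\kappa))^{-1}=\widetilde{\mathcal R}_{z(\kappa)}$, consistent with $\widetilde{\mathcal R}^{\pm}_{z}=\widetilde{\mathcal R}_{z}$ outside $(-\infty,0]$; for real $\kappa\neq 0$, one has $z(\kappa)=-\kappa^{2}\in(-\infty,0)$ and the two formulas match precisely the definition of $\widetilde R_{\kappa}$ on the real axis stated in the corollary, the sign being dictated by which side of the cut $z(\kappa)$ is approached from along $\CO_{+}$.

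With these two representations in hand, continuity of $\widetilde R_{\kappa}$ on each closed half-region would follow directly by composing the continuous (indeed analytic) map $\kappa\mapsto z(\kappa)$ with the $\mathscr B(L^{2}_{\alpha}(\RE^{3}),L^{2}_{-\alpha}(\RE^{3}))$-valued continuous maps $\widetilde{\mathcal R}^{\mp}$ on $\CO_{\mp}\cup\RE$ supplied by Theorem \ref{Theorem_LAP}. Finally, continuity across the imaginary semiaxis $\{ib:b>0\}$, where the two regions meet, would follow from the observation that there $z(\kappa)\in(0,+\infty)\subset\varrho(-A_{v,\rho})\cap\varrho(-\Delta)$, so that both branches $\widetilde{\mathcal R}^{+}_{z(\kappa)}$ and $\widetilde{\mathcal R}^{-}_{z(\kappa)}$ coincide with the standard resolvent $(-A_{v,\rho}+z(\kappa))^{-1}$. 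The only point requiring attention is this sign-matching bookkeeping: no new analytic input is needed beyond Theorem \ref{Theorem_LAP}.
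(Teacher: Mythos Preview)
Your proposal is correct and follows exactly the approach indicated in the paper, which proves the corollary by the one-line observation that $(-A_{v,\rho}-\kappa^{2})^{-1}=(-A_{v,\rho}-\text{Re}(\kappa)^{2}+\text{Im}(\kappa)^{2}-i\,2\text{Re}(\kappa)\text{Im}(\kappa))^{-1}$ and then invokes Theorem~\ref{Theorem_LAP}. You have simply made explicit the sign bookkeeping (which branch $\widetilde{\mathcal R}^{\pm}$ corresponds to which half of $\CO_{+}\cup\RE\setminus\{0\}$) and the gluing along the positive imaginary axis, both of which the paper leaves implicit.
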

\begin{remark} By \cite[Theorem 18.3 ii)]{Kom} and by the continuity of $\kappa\mapsto Q_{\kappa}^{-1}$ in a neighborhood of $0\in \CO$, it could be shown that $\kappa\mapsto \widetilde R_{\kappa}$  is in fact continuous on the whole $\CO_{+}\cup\RE$. However, for our purposes (see the proof of Lemma \ref{res} below) the statement in Corollary \ref{pseudo-LAP} suffices.
\end{remark}
By the self-adjointness of $A_{v,\rho}$ in $L^{2}(\RE^{3}\!,b^{-1})$, we have the orthogonal decomposition%
\[
L^{2}(\RE^{3}\!,b^{-1})  =   L^{2}(    \RE^{3}\!,b^{-1})        _{c}\oplus  L^{2}(\RE^{3}\!,b^{-1})  _{pp}\,,
\]
where $    L^{2}(\RE^{3}\!,b^{-1})      _{c}$ and $  
L^{2}(\RE^{3}\!,b^{-1})      _{pp}$ respectively denote the
continuous and pure point subspaces of $A_{v,\rho}$. Let $P_{\lambda
}    $ be the spectral resolution of the identity
associated with $A_{v,\rho}$. Recall that if $f\in   L^{2}(\RE^{3}\!,b^{-1})       _{c}=    (  L^{2}(\RE^{3}\!,b^{-1})  _{pp})    ^{\bot}$, $\lambda\mapsto\left\langle
f,P_{\lambda}    f\right\rangle_{L^{2}(\RE^{3})} $ is a continuous
function on $\sigma_{c}(A_{v,\rho})    $. The continuous
subspace $    L^{2}(\RE^{3}\!,b^{-1})      _{c}$ further
decomposes as an orthogonal sum of the absolutely continuous and singular
subspaces of $A_{v,\rho}$%
\[
   L^{2}(\RE^{3}\!,b^{-1})      _{c}=    L^{2}(\RE^{3}\!,b^{-1})      _{ac}\oplus   L^{2}(    \mathbb{R}%
^{3}\!,b^{-1})        _{sc}\,.
\]
The absolute continuous part of the spectrum ${\sigma}_{{ac}%
}(A_{v,\rho} )    $ is defined by the condition that, if
$f\in   L^{2}(\RE^{3}\!,b^{-1})     _{ac}$, then
$\lambda\mapsto\left\langle f,P_{\lambda}
f\right\rangle_{L^{2}(\RE^{3})} $ is absolutely continuous on $\sigma_{{ac}%
}( A_{v,\rho})    $. Hence, the absence of singular continuous
spectrum corresponds to the identity%
\[
    L^{2}(\RE^{3}\!,b^{-1})      _{ac}=    (  
L^{2}(\RE^{3}\!,b^{-1})     _{pp})    ^{\bot}\,.
\]
By Theorem \ref{Theorem_LAP}, we get 

\begin{corollary}
\label{Prop_empty_sc}%
\[%
\begin{array}
[c]{ccc}%
\sigma_{ac}(  A_{v,\rho})    =(-\infty,0]
    \,, &  & \sigma_{sc}(  A_{v,\rho}  )    =\varnothing\,.
\end{array}
\]
\end{corollary}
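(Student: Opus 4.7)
The plan is to deduce the absence of singular continuous spectrum from the Limiting Absorption Principle of Theorem \ref{Theorem_LAP} via the standard Stone-formula argument, and then assemble the remaining pieces using the previously established spectral results.

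Fixing $\alpha > 1/2$, for every $f$ in the dense subspace $L^{2}_{\alpha}(\RE^{3}) \subset L^{2}(\RE^{3},b^{-1})$, Theorem \ref{Theorem_LAP} implies that the scalar function
\[
\lambda \mapsto \langle f, \widetilde{\mathcal R}^{\pm}_{\lambda} f\rangle_{L^{2}(\RE^{3},b^{-1})}
\]
(well defined through the duality pairing between $L^{2}_{\alpha}(\RE^{3})$ and $L^{2}_{-\alpha}(\RE^{3})$) is continuous on $\RE \setminus \{0\}$. By Stone's formula, the spectral measure $d\langle f, P_{\lambda} f\rangle$ is therefore absolutely continuous on $(-\infty,0)\setminus\{0\}$ with continuous Lebesgue density $\tfrac{1}{2\pi i}\langle f, (\widetilde{\mathcal R}^{-}_{\lambda} - \widetilde{\mathcal R}^{+}_{\lambda}) f\rangle_{L^{2}(\RE^{3},b^{-1})}$.

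A routine density argument then extends absolute continuity of the spectral measure to every $f \in L^{2}(\RE^{3},b^{-1})$: if the singular continuous subspace were nontrivial, a unit vector $g$ in it, approximated in $L^{2}_{\alpha}(\RE^{3})$, would, via dominated convergence applied to the Stieltjes representation above, produce vanishing singular continuous mass on $(-\infty,0)\setminus\{0\}$, a contradiction. The single point $\lambda = 0$ is harmless since no purely continuous measure has atoms, so $\sigma_{sc}(A_{v,\rho}) = \varnothing$.

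Combining this with $\sigma_{p}(A_{v,\rho}) = \varnothing$ from Lemma \ref{sp} and $\sigma_{ess}(A_{v,\rho}) = (-\infty,0]$ from Lemma \ref{Lemma_ess}, the orthogonal decomposition $L^{2}(\RE^{3},b^{-1}) = L^{2}(\RE^{3},b^{-1})_{pp} \oplus L^{2}(\RE^{3},b^{-1})_{ac} \oplus L^{2}(\RE^{3},b^{-1})_{sc}$ collapses to $L^{2}(\RE^{3},b^{-1}) = L^{2}(\RE^{3},b^{-1})_{ac}$; since $\sigma_{ac}$ is closed in $\RE$ and contains $(-\infty,0)$, it equals $\sigma(A_{v,\rho}) = (-\infty,0]$. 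The only mildly subtle step is the density argument promoting absolute continuity from $L^{2}_{\alpha}(\RE^{3})$ to the whole Hilbert space; this is classical and relies solely on dominated convergence for the Stieltjes transforms, with no further use of the structure of $A_{v,\rho}$ beyond Theorem \ref{Theorem_LAP}.
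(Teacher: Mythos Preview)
Your proof is correct and follows essentially the same route as the paper: invoke the LAP (Theorem~\ref{Theorem_LAP}) and Stone's formula to obtain absolute continuity of the spectral measure for $f$ in the dense subspace $L^{2}_{\alpha}(\RE^{3})$, then extend by density and use Lemmata~\ref{sp} and~\ref{Lemma_ess} to conclude. The only remark is that your density step is slightly over-engineered: rather than ``dominated convergence applied to the Stieltjes representation'', the cleaner mechanism is simply that $L^{2}_{\alpha}(\RE^{3})\subset L^{2}(\RE^{3},b^{-1})_{ac}$ together with the fact that the absolutely continuous subspace is closed, which immediately gives $L^{2}(\RE^{3},b^{-1})_{ac}=L^{2}(\RE^{3},b^{-1})$; the paper handles this by citing \cite[Proof of Theorem 6.1]{Agm}.
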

\begin{proof}
Arguing as in \cite[Proof of Theorem 6.1]{Agm}, in order to show that
$\sigma_{c}(A_{v,\rho})=\sigma_{ac}(A_{v,\rho})$, it suffices to show that
$\lambda\mapsto\left\langle f,P_{\lambda}
f\right\rangle $ is absolutely continuous in $\RE\backslash\{0\}$ for any 
$f\in((L^{2}(\RE^{3}\!,b^{-1}))^{pp})^{\bot}$.
Given the compact interval $[\lambda_{1},\lambda_{2}]$ and $f\in L^{2}_{\alpha}
(\mathbb{R}^{3})$, by Theorem \ref{Theorem_LAP}
and by Stone's formula one has
\begin{equation*}
\big\langle f,(P_{\lambda_{2}}-P_{\lambda_{1}})f\big\rangle_{L^{2}(\RE^{3})} =%
\frac{1}{2\pi i}\int_{\lambda_{1}}^{\lambda_{2}}\big\langle f,(\widetilde{\mathcal{R}}_{\lambda }^{+} -\widetilde{\mathcal{R}}_{\lambda }^{-}
)f\big\rangle_{\alpha,-\alpha}\, d\lambda \,,
\end{equation*}
where $\langle\cdot,\cdot\rangle_{\alpha,-\alpha}$ denotes the $L^{2}_{\alpha}
(\mathbb{R}^{3})$-$L^{2}_{-\alpha}
(\mathbb{R}^{3})$ duality.
Hence, $\lambda\mapsto\langle f,P_{\lambda} f\rangle_{L^{2}(\RE^{3})} $ is continuously differentiable for any $f\in L^{2}_{\alpha}(  \mathbb{R}^{3})
$. By the same argument as in \cite[Proof of Theorem 6.1]{Agm}, the property
extends to the whole $(    (  L^{2}(  \mathbb{R}^{3}))  ^{pp})    ^{\bot}$.
\end{proof}
Summing up, we get the following
\begin{theorem} $$\sigma(  A_{v,\rho})    =\sigma_{ess}(  A_{v,\rho})=\sigma_{ac}(  A_{v,\rho}) =(-\infty,0]$$ and   the resolvent formula in Theorem \ref{RD} holds on the full resolvent set of $A_{v,\rho}$. Furthermore, $\CO_{+}\ni \kappa\mapsto (-A_{v,\rho}-\kappa^{2})^{-1}$ is an analytic ${\B }(L^{2}(\RE^{3}))$-valued map. 
\end{theorem}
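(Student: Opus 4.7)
The statement is a consolidation of results already assembled in Sections 4 and 5, so the plan is essentially to assemble the pieces and then extend the resolvent identity from $\mathbb{C}_{+}$ to the full resolvent set by composition with the squaring map.

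First I would handle the spectral identities. By Lemma \ref{Lemma_ess} one has $\sigma_{ess}(A_{v,\rho}) = (-\infty,0]$, and by Corollary \ref{Prop_empty_sc} both $\sigma_{ac}(A_{v,\rho}) = (-\infty,0]$ and $\sigma_{sc}(A_{v,\rho}) = \varnothing$. Combined with Lemma \ref{sp}, which gives $\sigma_{p}(A_{v,\rho}) = \varnothing$, the usual decomposition $\sigma = \sigma_{ac}\cup\sigma_{sc}\cup\sigma_{pp}$ yields $\sigma(A_{v,\rho}) = \sigma_{ac}(A_{v,\rho}) = \sigma_{ess}(A_{v,\rho}) = (-\infty,0]$.

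Next I would promote the resolvent difference formula from Theorem \ref{RD}, originally proved for $\kappa\in\mathbb{C}_{+}$, to the full resolvent set. The key observation is that the map $\kappa\mapsto -\kappa^{2}$ is a biholomorphic bijection from $\mathbb{C}_{+}$ onto $\mathbb{C}\setminus[0,+\infty)$, and, by the first part, $\varrho(A_{v,\rho}) = \mathbb{C}\setminus(-\infty,0]$, so $\{-\kappa^{2}:\kappa\in\mathbb{C}_{+}\} = \varrho(A_{v,\rho})\cap\varrho(\Delta)$. Thus the parametrization $z=-\kappa^{2}$, $\kappa\in\mathbb{C}_{+}$, exhausts all admissible spectral parameters and the identity of Theorem \ref{RD} covers $\varrho(A_{v,\rho})$ in its entirety.

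Finally, the analyticity statement follows at once: for $\kappa\in\mathbb{C}_{+}$ one has $-\kappa^{2}\in\varrho(A_{v,\rho})$, and since the resolvent map $z\mapsto(-A_{v,\rho}-z)^{-1}$ is norm-analytic on $\varrho(A_{v,\rho})$ as a $\mathscr{B}(L^{2}(\mathbb{R}^{3},b^{-1}))$-valued function (standard for any closed operator on its resolvent set), the composition with the analytic map $\kappa\mapsto\kappa^{2}$ produces a $\mathscr{B}(L^{2}(\mathbb{R}^{3},b^{-1}))$-analytic function on $\mathbb{C}_{+}$, equivalently a $\mathscr{B}(L^{2}(\mathbb{R}^{3}))$-analytic function since the two norms are equivalent (as noted in Section \ref{Sec_Notation}). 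There is essentially no obstacle here; the only point of care is bookkeeping the passage between the $L^{2}(\mathbb{R}^{3},b^{-1})$-self-adjoint framework where the spectral theorem is applied and the $L^{2}(\mathbb{R}^{3})$-framework where the resolvent difference formula of Theorem \ref{RD} is stated, which is harmless thanks to the equivalence $L^{2}(\mathbb{R}^{3},b^{-1})\cong L^{2}(\mathbb{R}^{3})$.
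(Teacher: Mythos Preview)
Your proof is correct and follows essentially the same route as the paper: assemble Lemma \ref{sp}, Lemma \ref{Lemma_ess}, and Corollary \ref{Prop_empty_sc} for the spectral identities, then use analyticity of the resolvent on $\varrho(A_{v,\rho})$ composed with $\kappa\mapsto -\kappa^{2}$. One small slip: the image of $\mathbb{C}_{+}$ under $\kappa\mapsto -\kappa^{2}$ is $\mathbb{C}\setminus(-\infty,0]$, not $\mathbb{C}\setminus[0,+\infty)$; your subsequent identification $\{-\kappa^{2}:\kappa\in\mathbb{C}_{+}\}=\varrho(A_{v,\rho})$ is nonetheless correct.
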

\begin{proof} By Lemma \ref{sp} and Corollary \ref{Prop_empty_sc}, 
$$
{\sigma}(    A_{v,\rho})=\sigma_{p}(    A_{v,\rho})\cup \sigma_{c}(    A_{v,\rho})=\sigma_{c}(   A_{v,\rho} )=\sigma_{ac}(    A_{v,\rho})\,.
$$ 
By Lemma \ref{sp} and by $\sigma_{{disc}}(   A_{v,\rho}  )\subseteq \sigma_{p}(    A_{v,\rho} )$, 
$$
{\sigma}(     A_{v,\rho})=\sigma_{{disc}}(    A_{v,\rho})\cup \sigma_{ess}(     A_{v,\rho})=(-\infty,0]\,.
$$  
Since $ A_{v,\rho}$ is a closed operator, the map $\CO\backslash(-\infty,0]\ni z\mapsto (- A_{v,\rho}+z)^{-1}$ is analytic. The proof is then concluded by $\kappa\in \CO_{+}\Rightarrow-\kappa^{2}\in \CO\backslash(-\infty,0]$.
\end{proof}
\section{Resolvent convergence} 
By the resolvents difference formula in Theorem \ref{RD},  one has
\begin{align*}
&(-A_{v',\rho'}-\kappa^{2})^{-1}-(-A_{v,\rho}-\kappa^{2})^{-1}=
\begin{bmatrix}R_{\kappa}1_{\Omega}^{*}& \SL_{\kappa}\end{bmatrix}\times\\
&\times\!\!
\left(\begin{bmatrix} v'^{2}+({ v'^{2}}-1)\kappa^{2}N_{\kappa}&({ v'^{2}}-1)\kappa^{2}1_{\Omega}\SL_{\kappa}\\-\widetilde{\rho'}\,\gamma^{\-}_{1}N_{\kappa}&1-\widetilde{\rho'}\,\gamma_{1}\SL_{\kappa}
\end{bmatrix}^{\!-1}\!\!\!\!\!-\!\begin{bmatrix}v^{2}+({v^{2}}-1)\kappa^{2}N_{\kappa}&({v^{2}}-1)\kappa^{2}1_{\Omega}\SL_{\kappa}\\-{\widetilde\rho}\,\gamma^{\-}_{1}N_{\kappa}&1-{\widetilde\rho}\,\gamma_{1}\SL_{\kappa}
\end{bmatrix}^{\!-1}
\right)\!\!\times\\
&\times
\begin{bmatrix}(v'^{2}-1)\Delta^{\max}_{\Omega }1_{\Omega}R_{\kappa}\\ {\widetilde{\rho'}}\,\gamma_{1}R_{\kappa}
\end{bmatrix}+
\begin{bmatrix}R_{\kappa}1_{\Omega}^{*}& \SL_{\kappa}\end{bmatrix}\begin{bmatrix}v^{2}+({v^{2}}-1)\kappa^{2}N_{\kappa}&({v^{2}}-1)\kappa^{2}1_{\Omega}\SL_{\kappa}\\-{\widetilde\rho}\,\gamma^{\-}_{1}N_{\kappa}&1-{\widetilde\rho}\,\gamma_{1}\SL_{\kappa}
\end{bmatrix}^{\!-1}
\!\!\times\\
&\times
\begin{bmatrix}( v'^{2}-v^{2})\Delta^{\max}_{\Omega }1_{\Omega}R_{\kappa}\\ (\widetilde{\rho'}-{\widetilde\rho}\,)\,\gamma_{1}R_{\kappa}
\end{bmatrix}\\
=&
\begin{bmatrix}R_{\kappa}1_{\Omega}^{*}& \SL_{\kappa}\end{bmatrix}
\begin{bmatrix} v'^{2}+({ v'^{2}}-1)\kappa^{2}N_{\kappa}&({ v'^{2}}-1)\kappa^{2}1_{\Omega}\SL_{\kappa}\\-\widetilde{\rho'}\,\gamma^{\-}_{1}N_{\kappa}&1-\widetilde{\rho'}\,\gamma_{1}\SL_{\kappa}
\end{bmatrix}^{\!-1}\times\\
&\times\begin{bmatrix} ( v^{2}-v'^{2})(1+\kappa^{2}N_{\kappa}) &( v^{2}-v'^{2})\kappa^{2}1_{\Omega}\SL_{\kappa}
\\(\widetilde{\rho'}-{\widetilde\rho}\,)\,\gamma^{\-}_{1}N_{\kappa}&(\widetilde{\rho'}-{\widetilde\rho})\gamma_{1}\SL_{\kappa}
\end{bmatrix}
\begin{bmatrix}v^{2}+({v^{2}}-1)\kappa^{2}N_{\kappa}&({v^{2}}-1)\kappa^{2}1_{\Omega}\SL_{\kappa}\\-{\widetilde\rho}\,\gamma^{\-}_{1}N_{\kappa}&1-{\widetilde\rho}\,\gamma_{1}\SL_{\kappa}
\end{bmatrix}^{\!-1}\times\\
&\times
\begin{bmatrix}(v'^{2}-1)\Delta^{\max}_{\Omega }1_{\Omega}R_{\kappa}\\ {\widetilde{\rho'}}\,\gamma_{1}R_{\kappa}
\end{bmatrix}+
\begin{bmatrix}R_{\kappa}1_{\Omega}^{*}& \SL_{\kappa}\end{bmatrix}\begin{bmatrix}v^{2}+({v^{2}}-1)\kappa^{2}N_{\kappa}&({v^{2}}-1)\kappa^{2}1_{\Omega}\SL_{\kappa}\\-{\widetilde\rho}\,\gamma^{\-}_{1}N_{\kappa}&1-{\widetilde\rho}\,\gamma_{1}\SL_{\kappa}
\end{bmatrix}^{\!-1}
\!\!\times\\
&\times
\begin{bmatrix}( v'^{2}-v^{2})\Delta^{\max}_{\Omega }1_{\Omega}R_{\kappa}\\ (\widetilde{\rho'}-{\widetilde\rho}\,)\,\gamma_{1}R_{\kappa}
\end{bmatrix}.
\end{align*}
By Remark \ref{not1}, in the case $v_{\ell}\not=1$, $v'_{\ell}\not=1$ and $\rho_{\ell}\not=1$, $\rho'_{\ell}\not=1$ for any index $\ell$, the above resolvents difference simplifies as 
\begin{align*}
&(-A_{v',\rho'}-\kappa^{2})^{-1}-(-A_{v,\rho}-\kappa^{2})^{-1}
=\begin{bmatrix}R_{\kappa}1_{\Omega}^{*}& \SL_{\kappa}
\end{bmatrix}
\begin{bmatrix} v'^{2}({ v'^{2}}-1)^{-1}+\kappa^{2}N_{\kappa}&\kappa^{2}1_{\Omega}\SL_{\kappa}\\-\gamma^{\-}_{1}N_{\kappa}&\widetilde{\rho'}^{-1}-\gamma_{1}\SL_{\kappa}
\end{bmatrix}^{\!-1}\times\\
&\times\!\begin{bmatrix}((1-{v^{2}})(1-{v'^{2}}))^{-1} ( v^{2}-v'^{2})&0
\\0&(\widetilde{\rho'}\widetilde{\rho\,})^{-1}(\widetilde{\rho'}-\widetilde{\rho}\,)
\end{bmatrix}
\begin{bmatrix}v^{2}({v^{2}}-1)^{-1}+\kappa^{2}N_{\kappa}&\kappa^{2}1_{\Omega}\SL_{\kappa}\\-\gamma^{\-}_{1}N_{\kappa}&\widetilde{\rho\,}^{-1}-\gamma_{1}\SL_{\kappa}
\end{bmatrix}^{\!-1}\!\!\!\times\\
&\times
\begin{bmatrix}\Delta^{\max}_{\Omega }1_{\Omega}R_{\kappa}\\ \gamma_{1}R_{\kappa}
\end{bmatrix}.
\end{align*}
By the relations above, one gets the continuity of $A_{v,\rho}$ with respect to the parameters: 
\begin{theorem}\label{conv-1} If, for any $1\le \ell\le n$, the sequence $\{v_{\ell}(j),\rho_{\ell}(j)\}_{j=1}^{+\infty} \subset (0,+\infty)\times(0,+\infty)$ converges to $(v_{\ell},\rho_{\ell})\in(0,+\infty)\times(0,+\infty)$ as $j\to+\infty$, then $\{A_{v(j),\rho(j)}\}_{j=1}^{+\infty}$ converges to $A_{v,\rho}$ in norm resolvent sense.
\end{theorem}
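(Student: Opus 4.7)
My strategy is to read off the convergence directly from the explicit resolvents-difference formula derived in the paragraph immediately preceding the theorem. That formula expresses $(-A_{v(j),\rho(j)}-\kappa^{2})^{-1}-(-A_{v,\rho}-\kappa^{2})^{-1}$ as a sum of two triple products in which the only $j$-dependent multiplication data are the $\Omega_{\ell}$-supported piecewise-constant function $v(j)^{2}-v^{2}$, the $\Gamma_{\ell}$-supported piecewise-constant function $\widetilde{\rho}(j)-\widetilde{\rho}$, together with the two $Q$-function inverses built on $(v(j),\rho(j))$ and on $(v,\rho)$. Fix any $\kappa\in\CO_{+}$; it suffices to show norm convergence of the resolvent at this single point.

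The first step is to observe that the ``outer'' operator-valued blocks
\[
\begin{bmatrix}R_{\kappa}1_{\Omega}^{*} & \SL_{\kappa}\end{bmatrix},\qquad
\begin{bmatrix}(v(j)^{2}-1)\Delta^{\max}_{\Omega}1_{\Omega}R_{\kappa}\\ \widetilde{\rho}(j)\,\gamma_{1}R_{\kappa}\end{bmatrix},\qquad
\begin{bmatrix}\Delta^{\max}_{\Omega}1_{\Omega}R_{\kappa}\\ \gamma_{1}R_{\kappa}\end{bmatrix}
\]
are bounded from/to the spaces appearing in \eqref{acQ} by the mapping properties collected in Section~2, and the first and last are $j$-independent while the second converges in operator norm to its limit as $j\to\infty$. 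The second step is to note that the ``inner'' multipliers $v(j)^{2}-v^{2}=\sum_{\ell}(v_{\ell}(j)^{2}-v_{\ell}^{2})\chi_{\Omega_{\ell}}$ acting on $L^{2}(\Omega)$, and $\widetilde{\rho}(j)-\widetilde{\rho}=2\sum_{\ell}\bigl(\tfrac{\rho_{\ell}(j)-1}{\rho_{\ell}(j)+1}-\tfrac{\rho_{\ell}-1}{\rho_{\ell}+1}\bigr)\chi_{\Gamma_{\ell}}$ acting on $H^{\pm 1/2}(\Gamma)$, both converge to $0$ in operator norm by the elementary continuity of the scalar maps $t\mapsto t^{2}$ and $t\mapsto(t-1)/(t+1)$ on $(0,+\infty)$.

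The third step handles the $Q$-function inverses. The operator $Q_{\kappa}$ in \eqref{acQ} depends on $(v,\rho)$ only through $v^{2}$ and $\widetilde{\rho}$, which are bounded multiplication operators on $L^{2}(\Omega)$ and $H^{-1/2}(\Gamma)$ respectively. Hence the map $(v,\rho)\mapsto Q_{\kappa}(v,\rho)\in\mathscr B(L^{2}(\Omega)\oplus H^{-1/2}(\Gamma))$ is operator-norm continuous, and since by Lemma~\ref{ZB} the limit $Q_{\kappa}(v,\rho)$ is invertible (because $\kappa\in\CO_{+}=Z_{\sb}$), a standard Neumann-series argument yields both invertibility of $Q_{\kappa}(v(j),\rho(j))$ for all sufficiently large $j$ and norm convergence
\[
Q_{\kappa}(v(j),\rho(j))^{-1}\;\longrightarrow\;Q_{\kappa}(v,\rho)^{-1}.
\]
Once these three facts are in place, the difference formula shows that each of the two summands is a product of uniformly bounded factors in which at least one factor tends to zero in operator norm, and the conclusion $(-A_{v(j),\rho(j)}-\kappa^{2})^{-1}\to(-A_{v,\rho}-\kappa^{2})^{-1}$ in $\mathscr B(L^{2}(\RE^{3}))$ follows at once.

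The only mildly delicate point is the third step: the norm continuity of $\kappa\mapsto Q_{\kappa}^{-1}$ in the coefficient parameters, and in particular the uniform invertibility of $Q_{\kappa}(v(j),\rho(j))$ for large $j$. It is precisely at this step that Lemma~\ref{ZB} is used in an essential way; otherwise one would have to re-verify the construction of $A_{\sb}$ for each $j$. With that ingredient settled, no further technicality arises, and the argument goes through verbatim.
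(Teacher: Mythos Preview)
Your proof is correct and follows exactly the same route as the paper: the paper derives the explicit resolvents-difference formula immediately before the theorem and then simply writes ``By the relations above, one gets the continuity of $A_{v,\rho}$ with respect to the parameters'', so your three steps just spell out what the paper leaves implicit. One very minor remark: you do not even need the Neumann-series argument to obtain invertibility of $Q_{\kappa}(v(j),\rho(j))$, since Lemma~\ref{ZB} already gives $Z_{\sb}=\CO_{+}$ for \emph{every} admissible pair $(v,\rho)$, hence for each $(v(j),\rho(j))$; the continuity argument is still needed, of course, for the uniform boundedness and norm convergence of the inverses.
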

Now, we consider the extreme cases where the $v_{\ell}(j)$'s and/or the $\rho_{\ell}(j)$'s converge to 
$+\infty$.  Obviously, 
$$
v_{\ell}(j)\to+\infty\quad\Rightarrow\quad\frac{{v_{\ell}(j)}^{2}}{{v_{\ell}(j)}^{2}-1}\to 1\,,
\qquad
\rho_{\ell}(j)\to+\infty\quad\Rightarrow\quad\frac{\rho_{\ell}(j)+1}{\rho_{\ell}(j)-1}=1\,.
$$
For brevity, we consider only the cases in which all the $v_{\ell}$'s and/or all the $\rho_{\ell}$'s  diverge; the case in which the material parameters only diverge on  some connected components of $\Omega$ can be treated in a similar way and is left to the reader.  
Hence, we introduce the block operator matrices in $L^{2}(\Omega)\oplus H^{-1/2}(\Gamma)$
$$
Q^{\flat,\natural}_{\kappa}:=\begin{bmatrix}w_{\flat}+\kappa^{2}N_{\kappa}&\kappa^{2}1_{\Omega}\SL_{\kappa}\\-\gamma^{\-}_{1}N_{\kappa}&z_{\natural}-\gamma_{1}\SL_{\kappa}
\end{bmatrix},\qquad (\flat,\natural)\in\{v,+\infty\}\backslash\{1\}\times \{\rho,+\infty\}\backslash\{1\}
$$
where
$$
w_{v}:=v^{2}(v^{2}-1)^{-1}\,,\qquad w_{+\infty}:=1\,,\qquad
z_{\rho}:=\widetilde\rho^{\,-1}\qquad z_{+\infty}:=\frac12\,.
$$
Since 
$$
(Q^{\flat,\natural}_{0})^{-1}=\begin{bmatrix}w_{\flat}^{-1}&0\\
w_{\flat}^{-1}\left(z_{\natural}-\gamma_{1}\SL_{0}\right)^{-1}\!\!\gamma^{\-}_{1}N_{0}&
\left(z_{\natural}-\gamma_{1}\SL_{0}\right)^{-1}
\end{bmatrix},
$$
by the continuity of $\kappa\mapsto Q^{\flat,\natural}_{\kappa}$, the inverse $(Q^{\flat,\natural}_{\kappa})^{-1}$ exists and is a bounded operator for  any $\kappa$ in a sufficiently small complex neighborhood of the origin $U_{0}$. Hence, 
\be\label{ps}
R^{\flat,\natural}_{\kappa}:=R_{\kappa}+\begin{bmatrix}R_{\kappa}1_{\Omega}^{*}& \SL_{\kappa}
\end{bmatrix}(Q_{k}^{\flat,\natural})^{-1}\begin{bmatrix}\Delta^{\max}_{\Omega }1_{\Omega}R_{\kappa}\\ \gamma_{1}R_{\kappa}
\end{bmatrix}
\ee
is a well defined bounded operator  for any $\kappa\in U_{0}\cap\CO_{+}$. 
Since   
$\|(-A_{v(j),\rho(j)}-\kappa^{2})^{-1}-R^{\flat,\natural}_{\kappa}\|\to 0$ as $(0,+\infty)\times (0,+\infty)\ni(v_{\ell}(j),\rho_{\ell}(j))\to (\flat,\natural)$, the map $-\kappa^{2}\mapsto R^{\flat,\natural}_{\kappa}$ is a pseudo-resolvent. It is the resolvent of a closed operator $A_{\flat,\natural}$ whenever $R^{\flat,\natural}_{\kappa}$ is injective for some $\kappa\in U_{0}\cap\CO_{+}$.
\begin{lemma}\label{inj} If $\flat\not=+\infty$, then $R^{\flat,\natural}_{\kappa}$ is injective for any $\kappa\in\CO_{+}$ such that $Q^{\flat,\natural}_{\kappa}$ has a bounded inverse.
\end{lemma}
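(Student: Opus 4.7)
The plan is to trace the equation $R^{\flat,\natural}_{\kappa}f=0$ back to its ingredients, use injectivity of the single layer to kill the boundary contribution, and then use that the multiplier $w_{\flat}-1$ is invertible precisely when $\flat\neq+\infty$ to kill the volumetric contribution.

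Assume $R^{\flat,\natural}_{\kappa}f=0$ for some $f\in L^{2}(\RE^{3})$, and set
$$
\begin{bmatrix}\varphi\\\phi\end{bmatrix}:=(Q^{\flat,\natural}_{\kappa})^{-1}\begin{bmatrix}\Delta^{\max}_{\Omega}1_{\Omega}R_{\kappa}f\\ \gamma_{1}R_{\kappa}f\end{bmatrix}\in L^{2}(\Omega)\oplus H^{-1/2}(\Gamma)\,.
$$
By \eqref{ps} this reads $R_{\kappa}(f+1_{\Omega}^{\ast}\varphi)+\SL_{\kappa}\phi=0$. I would then apply $(-\Delta-\kappa^{2})$ classically on each of the open sets $\Omega$ and $\Omega_{\+}$: on both sets, $\SL_{\kappa}\phi$ solves the Helmholtz equation and $R_{\kappa}(\cdot)$ inverts $(-\Delta-\kappa^{2})$, so one obtains $f=-\varphi$ on $\Omega$ and $f=0$ on $\Omega_{\+}$, i.e.\ $f+1_{\Omega}^{\ast}\varphi=0$ in $L^{2}(\RE^{3})$. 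Plugging this back gives $\SL_{\kappa}\phi=0$; since $\gamma_{0}\SL_{\kappa}=S_{\kappa}$ is an isomorphism $H^{-1/2}(\Gamma)\to H^{1/2}(\Gamma)$ for $\kappa\in\CO_{+}$, we get $\phi=0$.

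Substituting $\phi=0$ and $f=-1_{\Omega}^{\ast}\varphi$ into the first row of $Q^{\flat,\natural}_{\kappa}(\varphi,\phi)^{\top}=(\Delta^{\max}_{\Omega}1_{\Omega}R_{\kappa}f,\gamma_{1}R_{\kappa}f)^{\top}$ yields $(w_{\flat}+\kappa^{2}N_{\kappa})\varphi=\Delta^{\max}_{\Omega}1_{\Omega}R_{\kappa}f$. Using $1_{\Omega}R_{\kappa}1_{\Omega}^{\ast}=N_{\kappa}$ together with the identity $-\Delta^{\max}_{\Omega}N_{\kappa}=1+\kappa^{2}N_{\kappa}$ (already invoked in the proof of Lemma \ref{ZB}), the right-hand side simplifies to $(1+\kappa^{2}N_{\kappa})\varphi$, and the equation collapses to $(w_{\flat}-1)\varphi=0$. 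The hypothesis $\flat\neq+\infty$ enters exactly here: on each $\Omega_{\ell}$ the multiplier $w_{v}-1$ equals $(v_{\ell}^{2}-1)^{-1}\neq 0$, so $\varphi=0$ and in turn $f=0$.

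The most delicate point is the simultaneous separation of the volumetric and surface contributions in the first step, but this is automatic once one passes to the two open complementary sets $\Omega$ and $\Omega_{\+}$, on which $\SL_{\kappa}\phi$ is a classical Helmholtz solution. The case $\flat=+\infty$, where $w_{+\infty}-1=0$ identically, admits no analogue of this last algebraic step, in agreement with the statement.
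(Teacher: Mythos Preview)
Your proof is correct, but it takes a different route from the paper's. The paper proceeds purely algebraically: applying the block operator $\begin{bmatrix}\Delta_{\Omega}^{\max}1_{\Omega}\\ \gamma_{1}\end{bmatrix}$ to the identity $R_{\kappa}u_{0}=-\begin{bmatrix}R_{\kappa}1_{\Omega}^{*}&\SL_{\kappa}\end{bmatrix}(Q_{\kappa}^{\flat,\natural})^{-1}\begin{bmatrix}\Delta_{\Omega}^{\max}1_{\Omega}R_{\kappa}u_{0}\\ \gamma_{1}R_{\kappa}u_{0}\end{bmatrix}$ and using the matrix identity
\[
\begin{bmatrix}\Delta_{\Omega}^{\max}1_{\Omega}\\ \gamma_{1}\end{bmatrix}\begin{bmatrix}R_{\kappa}1_{\Omega}^{*}&\SL_{\kappa}\end{bmatrix}=\begin{bmatrix}w_{\flat}-1&0\\0&z_{\natural}\end{bmatrix}-Q_{\kappa}^{\flat,\natural}
\]
yields $\begin{bmatrix}w_{\flat}-1&0\\0&z_{\natural}\end{bmatrix}(Q_{\kappa}^{\flat,\natural})^{-1}\begin{bmatrix}\Delta_{\Omega}^{\max}1_{\Omega}R_{\kappa}u_{0}\\ \gamma_{1}R_{\kappa}u_{0}\end{bmatrix}=0$; since $z_{\natural}\neq 0$ always and $w_{\flat}\neq 1$ when $\flat\neq+\infty$, the diagonal is invertible, hence the whole vector vanishes, $R_{\kappa}u_{0}=0$, and $u_{0}=0$.

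Your argument instead uses a PDE step (applying $(-\Delta-\kappa^{2})$ on $\Omega$ and $\Omega_{\+}$ separately) to first force $f+1_{\Omega}^{*}\varphi=0$ and then kills $\phi$ via the injectivity of $\SL_{\kappa}$ (equivalently, the jump relation $[\gamma_{1}]\SL_{\kappa}\phi=-\phi$), before returning to the first row of the $Q$-equation. This is more hands-on and makes the roles of the surface and volume parts transparent; the paper's approach is slicker in that it dispatches both components simultaneously with a single matrix identity and does not invoke invertibility of $S_{\kappa}$. Both lead to the same crucial endpoint $(w_{\flat}-1)\varphi=0$.
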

\begin{proof} 
Let  $u_{0}\in L^{2}(\RE^{3})$ be such that $R^{\flat,\natural}_{\kappa}u_{0}=0$, i.e.,
$$
R_{k}u_{0}=-\begin{bmatrix}R_{\kappa}1_{\Omega}^{*}& \SL_{\kappa}
\end{bmatrix}(Q_{k}^{\flat,\natural})^{-1}\begin{bmatrix}\Delta^{\max}_{\Omega }1_{\Omega}R_{\kappa}u_{0}\\ \gamma_{1}R_{\kappa}u_{0}
\end{bmatrix}.
$$
By
\begin{align*}
\begin{bmatrix}
\Delta_{\Omega}^{\max}1_{\Omega}\\
\gamma_{1}
\end{bmatrix}
\begin{bmatrix}R_{\kappa}1_{\Omega}^{*}& \SL_{\kappa}
\end{bmatrix}=&
\begin{bmatrix}
\Delta_{\Omega}^{\max}N_{k}&\Delta_{\Omega}^{\max}1_{\Omega}\SL_{\kappa}
\\
\gamma_{1}N_{k}&\gamma_{1}\SL_{\kappa}
\end{bmatrix}
=
\begin{bmatrix}
-1-\kappa^{2}N_{k}&-\kappa^{2}1_{\Omega}\SL_{\kappa}
\\
\gamma_{1}N_{k}&\gamma_{1}\SL_{\kappa}
\end{bmatrix}\\
=&\begin{bmatrix}
w_{\flat}-1&0
\\
0&z_{\natural}
\end{bmatrix}-Q_{k}^{\flat,\natural}\,,
\end{align*}
there follows
$$
\begin{bmatrix}
\Delta_{\Omega}^{\max}1_{\Omega}R_{\kappa}u_{0}\\
\gamma_{1}R_{\kappa}u_{0}
\end{bmatrix}=\left(1-\begin{bmatrix}w_{\flat}-1&0\\0&z_{\natural}\end{bmatrix}(Q_{k}^{\flat,\natural})^{-1}\right)\begin{bmatrix}
\Delta_{\Omega}^{\max}1_{\Omega}R_{\kappa}u_{0}\\
\gamma_{1}R_{\kappa}u_{0}
\end{bmatrix},
$$
equivalently,
\be\label{traces}
\begin{bmatrix}w_{\flat}-1&0\\0&z_{\natural}\end{bmatrix}(Q_{k}^{\flat,\natural})^{-1}\begin{bmatrix}
\Delta_{\Omega}^{\max}1_{\Omega}R_{\kappa}u_{0}\\
\gamma_{1}R_{\kappa}u_{0}
\end{bmatrix}=0\,.
\ee
If $w_{\flat}\not=1$, then, by \eqref{traces}, $\begin{bmatrix}
\Delta_{\Omega}^{\max}1_{\Omega}R_{\kappa}u\\
\gamma_{1}R_{\kappa}u
\end{bmatrix}=0$; this entails $R_{\kappa}u=0$ and so $u=0$. 
\end{proof}
Consequently, one has the following
\begin{theorem}\label{teo-conv} If, for any $1\le \ell\le n$, the sequence $\{v_{\ell}(j),\rho_{\ell}(j)\}_{j=1}^{+\infty} \subset (0,+\infty)\times(0,+\infty)$ converges to $(v_{\ell},+\infty)$, $v_{\ell}\not=1$, as $j\to+\infty$, then $\{A_{v(j),\rho(j)}\}_{j=1}^{+\infty}$ converges in norm resolvent sense to the closed operator $A_{v,+\infty}$ with resolvent $R^{v,+\infty}_{\kappa}$ given by \eqref{ps}.
\end{theorem}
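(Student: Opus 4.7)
The plan is to establish the norm-resolvent convergence in three steps, built on the explicit resolvent formula of Theorem \ref{RD} in the form of Remark \ref{not1}, which applies since $v_{\ell}\neq 1$ and, for $j$ large, $\rho_{\ell}(j)\neq 1$.

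First, I would exploit the elementary fact that, as $\rho_{\ell}(j)\to+\infty$, the quantities $\widetilde{\rho(j)}_{\ell}^{\,-1}=(\rho_{\ell}(j)+1)/(2(\rho_{\ell}(j)-1))$ tend to $1/2=z_{+\infty}$, uniformly in $\ell$, while $w_{v(j)}=v^{2}/(v^{2}-1)$ is constant in $j$. Hence $Q^{v(j),\rho(j)}_{\kappa}\to Q^{v,+\infty}_{\kappa}$ in $\B(L^{2}(\Omega)\oplus H^{-1/2}(\Gamma))$, uniformly in $\kappa$ on compact subsets of $\CO$. Combining this with the bounded invertibility of $Q^{v,+\infty}_{0}$ (explicitly computed in the passage preceding \eqref{ps}) and the continuity of $\kappa\mapsto Q^{v,+\infty}_{\kappa}$, a standard Neumann-series argument yields a complex neighborhood $U_{0}$ of the origin on which, for $j$ large, $(Q^{v(j),\rho(j)}_{\kappa})^{-1}$ exists and converges in norm to $(Q^{v,+\infty}_{\kappa})^{-1}$, uniformly on compact subsets of $U_{0}$.

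Next, plugging this into the resolvent formula of Remark \ref{not1} delivers the norm convergence $\bigl\|(-A_{v(j),\rho(j)}-\kappa^{2})^{-1}-R^{v,+\infty}_{\kappa}\bigr\|\to 0$ for every $\kappa\in U_{0}\cap\CO_{+}$. Passing to the limit in the first resolvent identity satisfied by each $(-A_{v(j),\rho(j)}-\kappa^{2})^{-1}$ shows that $\kappa\mapsto R^{v,+\infty}_{\kappa}$ is a pseudo-resolvent on $U_{0}\cap\CO_{+}$; since the hypothesis $v_{\ell}\neq 1$ gives $\flat=v\neq+\infty$, Lemma \ref{inj} provides its injectivity. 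Standard pseudo-resolvent theory (along the lines already invoked in the proof of Theorem \ref{teo-res}) then produces a uniquely defined closed operator $A_{v,+\infty}$ whose resolvent coincides with $R^{v,+\infty}_{\kappa}$.

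Finally, the first resolvent identity propagates the norm convergence from a single point $\kappa_{\star}\in U_{0}\cap\CO_{+}$ to the whole intersection $\varrho(A_{v,+\infty})\cap\bigcap_{j}\varrho(A_{v(j),\rho(j)})$, completing the norm-resolvent convergence statement. The main obstacle I anticipate is the uniform Neumann-series control in the first step, i.e.\ securing invertibility of $Q^{v(j),\rho(j)}_{\kappa}$ on a fixed neighborhood of the origin independently of $j$; the hypothesis $v_{\ell}\neq 1$ enters only in Lemma \ref{inj}, where the condition $w_{\flat}\neq 1$ is what forces the traces of $R_{\kappa}u_{0}$ in \eqref{traces} to vanish.
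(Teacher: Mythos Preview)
Your proposal is correct and follows essentially the same route as the paper: the paper's argument preceding Theorem \ref{teo-conv} is precisely the sketch you flesh out (convergence of the $Q$-operators, hence of their inverses on $U_{0}$, hence of the resolvents; pseudo-resolvent property by passage to the limit; injectivity via Lemma \ref{inj}). One small slip: you write that $w_{v(j)}=v^{2}/(v^{2}-1)$ is constant in $j$, but the hypothesis only gives $v_{\ell}(j)\to v_{\ell}$, so $w_{v(j)}\to w_{v}$ rather than being constant; this does not affect your argument, since the convergence $Q^{v(j),\rho(j)}_{\kappa}\to Q^{v,+\infty}_{\kappa}$ still holds.
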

\begin{remark} The analogous of Theorem \ref{teo-conv} for the case  $v_{1}=\dots=v_{n}=1$ follows by using the resolvent in Example \ref{surf-pert}. One has that  $A_{1,\rho(j)}$ converges in norm resolvent sense to $A_{1,+\infty}$, with
$$
(A_{1,+\infty}-\kappa^{2})^{-1}=
R_{\kappa}
+ \SL_{\kappa}
\left(\frac12-\gamma_{1}\SL_{\kappa}\right)^{\!\!-1}\!\!
\gamma_{1}R_{\kappa}\,.
$$
The latter is injective by the same kind of reasonings as in the proof of Lemma \ref{inj}.
\end{remark}
\begin{remark} Looking at the resolvent in Example \ref{vol-pert}, one gets that $v_{\ell}(j)\to+\infty$, implies the norm convergence of $(-A_{v(j),1}-\kappa^{2})^{-1}$ to the pseudo-resolvent 
$$
R_{\kappa}+R_{\kappa}1_{\Omega}^{*}(1+\kappa^{2}N_{\kappa})^{-1}\Delta^{\max}_{\Omega }1_{\Omega}R_{\kappa}\,.
$$
However, the latter is not injective and hence is not the resolvent of a closed operator. Indeed, one can easily check that any function of the kind $1_{\Omega}^{*}u_{\Omega}$, $u_{\Omega}\in L^{2}(\Omega)$, 
belongs to its kernel.
\end{remark}
\begin{remark}
The closed operator $A_{v,+\infty}$ is relevant in acoustic modeling only if it is the generator of a operator cosine function, so that the Cauchy problem for the wave equation $\frac{\partial^{2} u}{\partial{t}^{2}}=A_{v,+\infty}u$ is solvable. In the not self-adjoint case (as for $A_{v,+\infty}$), this can be a not trivial problem, even if one has at disposal the resolvent (see, e.g., \cite[Theorem 2.1, Chapter II]{Fatt}). 
\end{remark}
\section{Resonances}
From now on, we suppose that the boundary $\Gamma$ is of class $\C^{1,\alpha}$, i.e., it can be represented locally as the graph of a 
function having H\"older-continuous, with exponent $\alpha$, first-order derivatives (see, e.g., \cite[page 90]{McLe}).
\begin{lemma}
\label{Fredh}The map 
$$\CO\ni\kappa\mapsto 
Q_{\kappa}:=\begin{bmatrix}v^{2}+({v^{2}}-1)\kappa^{2}N_{\kappa}&({v^{2}}-1)\kappa^{2}1_{\Omega}\SL_{\kappa}\\-\widetilde\rho
\,\gamma^{\-}_{1}N_{\kappa}&1-\widetilde\rho\,\gamma_{1}\SL_{\kappa}
\end{bmatrix}
$$
is a $\mathscr{B}(  L^{2}(  \Omega)\oplus H^{-1/2}(\Gamma))$-valued analytic map and $\CO\ni\kappa\mapsto Q_{\kappa}^{-1}$ is a $\mathscr{B}(L^{2}(\Omega)\oplus H^{-1/2}(\Gamma))$-valued meromorphic map with poles of finite rank. Such poles are the points $\kappa_{\circ}\in\CO\backslash\CO_{+}$ such that $\ker(Q_{\kappa_{\circ}})\not=\{0\}$.
\end{lemma}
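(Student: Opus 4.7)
The plan is to apply the analytic Fredholm theorem to $\kappa \mapsto Q_{\kappa}$, after first decomposing it as an invertible constant operator plus a compact analytic perturbation.

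First, analyticity. The free resolvent kernel $\frac{e^{i\kappa|x-y|}}{4\pi|x-y|}$ is entire in $\kappa$, which (after standard dominated-convergence arguments on $\Omega$ and $\Gamma$) gives the entire analyticity in $\kappa$ of each of the operator-valued maps $N_{\kappa}:L^{2}(\Omega)\to H^{2}(\Omega)$, $1_{\Omega}\SL_{\kappa}:H^{-1/2}(\Gamma)\to H^{1}(\Omega)$, $\gamma_{1}^{\-}N_{\kappa}:L^{2}(\Omega)\to H^{1/2}(\Gamma)$, and $\gamma_{1}\SL_{\kappa}:H^{-1/2}(\Gamma)\to H^{-1/2}(\Gamma)$. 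Composing with the bounded embeddings into $L^{2}(\Omega)$ and $H^{-1/2}(\Gamma)$ respectively, one obtains the analyticity of $\kappa\mapsto Q_{\kappa}$ as a $\mathscr B(L^{2}(\Omega)\oplus H^{-1/2}(\Gamma))$-valued map.

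Next, the Fredholm structure. I would write
\[
Q_{\kappa}=D+K_{\kappa},\qquad D:=\begin{bmatrix} v^{2} & 0 \\ 0 & 1\end{bmatrix},\qquad K_{\kappa}:=\begin{bmatrix} ({v^{2}}-1)\kappa^{2}N_{\kappa} & ({v^{2}}-1)\kappa^{2}1_{\Omega}\SL_{\kappa} \\ -\widetilde\rho\,\gamma^{\-}_{1}N_{\kappa} & -\widetilde\rho\,\gamma_{1}\SL_{\kappa} \end{bmatrix}.
\]
The three off-diagonal-type blocks of $K_{\kappa}$ involving $N_{\kappa}$, $1_{\Omega}\SL_{\kappa}$ and $\gamma_{1}^{\-}N_{\kappa}$ are compact as operators into $L^{2}(\Omega)$ or $H^{-1/2}(\Gamma)$ by factoring through $H^{2}(\Omega)$ or $H^{1/2}(\Gamma)$ and using the compact embeddings $H^{2}(\Omega)\hookrightarrow L^{2}(\Omega)$ and $H^{1/2}(\Gamma)\hookrightarrow H^{-1/2}(\Gamma)$, exactly as in the argument used in Lemma~\ref{Lemma_ess}. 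The delicate block is $\gamma_{1}\SL_{\kappa}=K_{0}^{\ast}+(\gamma_{1}\SL_{\kappa}-K_{0}^{\ast})$: the difference has a kernel of the form $(e^{i\kappa|x-y|}-1)/(4\pi|x-y|)$ times a normal derivative, which is more regular than that of $K_{0}^{\ast}$ and yields a compact operator on $H^{-1/2}(\Gamma)$. This is where the standing hypothesis $\Gamma\in\C^{1,\alpha}$ enters: on such boundaries the Neumann--Poincar\'e operator $K_{0}^{\ast}\in\mathscr B(H^{-1/2}(\Gamma))$ is itself compact (classically, the kernel of $K_{0}^{\ast}$ picks up an extra $\alpha$ smoothing off the diagonal, so that $K_{0}^{\ast}$ factors through a fractional Sobolev space compactly embedded in $H^{-1/2}(\Gamma)$). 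Consequently $K_{\kappa}$ is compact for every $\kappa\in\CO$, and $D$ is invertible (recall $v^{2}\geq \min_{\ell}v_{\ell}^{2}>0$) with $D^{-1}\in\mathscr B(L^{2}(\Omega)\oplus H^{-1/2}(\Gamma))$.

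Then $Q_{\kappa}=D(I+D^{-1}K_{\kappa})$, and $\kappa\mapsto I+D^{-1}K_{\kappa}$ is an analytic family of the form identity-plus-compact on the connected set $\CO$. By Lemma~\ref{ZB}, $Q_{\kappa}^{-1}$ exists on $\CO_{+}$, so in particular $I+D^{-1}K_{\kappa}$ is invertible at some point. The analytic Fredholm theorem (see e.g.\ \cite[Theorem VI.14]{ReSi IV}) therefore gives that $\kappa\mapsto (I+D^{-1}K_{\kappa})^{-1}$ is meromorphic on $\CO$ with finite-rank residues, and hence so is $\kappa\mapsto Q_{\kappa}^{-1}=(I+D^{-1}K_{\kappa})^{-1}D^{-1}$. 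By the Fredholm alternative, the poles are exactly the $\kappa_{\circ}\in\CO$ for which $I+D^{-1}K_{\kappa_{\circ}}$ fails to be injective, equivalently $\ker(Q_{\kappa_{\circ}})\neq\{0\}$; by Lemma~\ref{ZB} none of them lies in $\CO_{+}$.

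The main technical obstacle is the compactness of $\gamma_{1}\SL_{\kappa}$ on $H^{-1/2}(\Gamma)$, which is where the $\C^{1,\alpha}$ regularity is essential: on merely Lipschitz boundaries $K_{0}^{\ast}$ is bounded but typically only has spectrum in $[-1/2,1/2)$ (as already used in Lemma~\ref{ZB}), and the Fredholm argument above would then break down.
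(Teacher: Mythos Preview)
Your proof is correct and follows essentially the same approach as the paper: the paper also factors $Q_{\kappa}=D(I+D^{-1}K_{\kappa})$ with the same diagonal $D$, verifies that each block of $K_{\kappa}$ is compact (using the $\C^{1,\alpha}$ hypothesis precisely for the compactness of $\gamma_{1}\SL_{\kappa}$ via the Neumann--Poincar\'e operator), and then invokes the analytic/meromorphic Fredholm theorem. Your treatment is slightly more explicit about analyticity and about the role of Lemma~\ref{ZB} in supplying a point of invertibility, but the substance is the same.
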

\begin{proof} Rewriting $Q_{\kappa}$ as
 $$
Q_{\kappa}=
\begin{bmatrix}v^{2}&0\\
0&1\end{bmatrix}\left(\,\begin{bmatrix}1&0\\
0&1\end{bmatrix}+\begin{bmatrix}v^{-2}({v^{2}}-1)\kappa^{2}&0\\
0&-\widetilde\rho\,\end{bmatrix}\begin{bmatrix}N_{\kappa}&1_{\Omega}\SL_{\kappa}\\
\gamma^{\-}_{1}N_{\kappa}&\gamma_{1}\SL_{\kappa}
\end{bmatrix}\,\right),
$$
by meromorphic Fredholm theory (see, e.g., \cite[Theorem XIII.13]{ReSi IV}) it is enough to show that 
\be\label{red-map}
\CO\ni\kappa\mapsto \begin{bmatrix}N_{\kappa}&1_{\Omega}\SL_{\kappa}\\\gamma^{\-}_{1}N_{\kappa}&\gamma_{1}\SL_{\kappa}
\end{bmatrix}
\ee
is a $\mathfrak{S}_{\infty}( L^{2}(\Omega)\oplus H^{-1/2}(\Gamma))$-valued analytic map.\par
By, e.g., \cite[Lemma 3.1]{ZAMP}, $N_{\kappa}$ is Hilbert-Schmidt and hence compact. One gets $1_{\Omega}\SL_{\kappa}\in \mathfrak{S}_{\infty}(H^{-1/2}(\Gamma), L^{2}(\Omega))$
by $1_{\Omega}\SL_{\kappa}\in \B(H^{-1/2}(\Gamma), H^{1}(\Omega))$ and by the compact embedding $H^{1}(\Omega) \hookrightarrow L^{2}(\Omega)$. 
It is known that the Neumann-Poincar\'e operator $K_{0}$ belongs to $\mathfrak{S}_{\infty}(H^{1/2}(\Gamma))$ (here we need the $\mathcal{C}^{1,\alpha}$ hypothesis on $\Gamma$, see, e.g., \cite{Kang}), $\gamma_{1}\SL_{0}=K_{0}^{*}\in\mathfrak{S}_{\infty}(H^{-1/2}(\Gamma))$. The same argument (see, e.g., \cite[equation (15)]{Kang}) gives 
$\gamma_{1}\SL_{\kappa}\in\mathfrak{S}_{\infty}(H^{-1/2}(\Gamma))$. 
\par
Finally, $\gamma_{1}^{\-}N_{\kappa}\in \mathfrak{S}_{\infty}(L^{2}(\Omega),H^{-1/2}(\Gamma))$ by $\gamma_{1}^{\-}N_{\kappa}\in \B(L^{2}(\Omega),H^{1/2}(\Gamma))$ and by the compact embedding $H^{1/2}(\Gamma) \hookrightarrow H^{-1/2}(\Gamma)$. 
\end{proof}
The analogy with the case of Schr\"{o}dinger operators
suggests to define the resonances of $A_{v,\rho}$ as the poles of the
meromorphic extension of the map $z\mapsto 1_{B_{2}}(-A_{v,\rho}-z)^{-1}
1^{*}_{B_{1}}$ in the
non-physical sheet where $\text{Im}(\sqrt{z})<0$; here, $B_{1}$ and $B_{2}$ are any two  open bounded sets. As is well known, the map $\CO\ni\kappa\mapsto 1_{B_{2}}R_{\kappa}1^{*}_{B_{1}}$ 
is analytic. This entails the analyticity of the maps 
$\CO\ni\kappa\mapsto 1_{B_{2}}\SL_{\kappa}$, $\CO\ni\kappa\mapsto \Delta_{\Omega}^{\mathrm{max}}1^{*}_{\Omega}R_{\kappa}1_{B_{1}}$  and $\CO\ni\kappa\mapsto \gamma_{1}R_{\kappa}1_{B_{1}}$.  Therefore, taking into account the resolvent formula in Theorem \ref{RD} and Lemma \ref{Fredh}, the analytic $\B(L^{2}(B_{1}),L^{2}(B_{2}))$-valued function $\CO_{+}\ni\kappa \mapsto 1_{B_{2}}(-A_{v,\rho}-\kappa^{2})1^{*}_{B_{1}}$ has a 
meromorphic $\B(L^{2}(B_{1}),L^{2}(B_{2}))$-valued extension
\be\label{mero}
\CO\ni\kappa \mapsto 1_{B_{1}}R_{\kappa}1^{*}_{B_{2}}+1_{B_{1}}
\begin{bmatrix}
R_{\kappa}1_{\Omega}^{\ast} & S\!L_{\kappa}%
\end{bmatrix}Q_{\kappa}^{-1}%
\begin{bmatrix}
({v^{2}}-1)\Delta_{\Omega}^{\mathrm{max}}1_{\Omega}R_{\kappa}\\
\widetilde{\rho}\,\gamma_{1}R_{\kappa}%
\end{bmatrix}1^{*}_{B_{2}}\,.
\ee
Since the operator  (for brevity we denote it by $L$) on the right of $Q_{\kappa}^{-1}$ in \eqref{mero} has range which is dense in $L^{2}(\Omega)\oplus H^{-1/2}(\Gamma)$ and the one on the left is injective, the poles of the map in \eqref{mero} coincides with the points $\kappa_{\circ}$ such that $\ker(Q_{\kappa_{\circ}})\not=\{0\}$. Otherwise, denoting by  $\ran(L)_{\|}$ the orthogonal projection onto the closed subspace $\ker(Q_{\kappa_{\circ}})$, we would have $\ran(L)_{\|}\cap \ker(Q_{\kappa_{\circ}})=\{0\}$ which would imply $\ran(L)\subseteq \ker(Q_{\kappa_{\circ}})^{\perp}$, in contradiction with the density of $\ran(L)$. \par
Therefore, we define the set of resonances of $A_{v,\rho}$ as  the discrete set 
\begin{equation}
\mathsf{e}(A_{v,\rho}):=\{\kappa_{\circ}^{2}\in\CO:\kappa_{\circ
}\in{\mathbb{C}}_{-}\ \text{and}\ \ker
(Q_{\kappa_{\circ}})\not =\{0\},\ \}\,. \label{e_id}%
\end{equation}
This definition of resonance set is in agreement with the one given in \cite[Appendix B]{AGHKH}. 
To avoid any misunderstanding, let us remark that in some literature the term ''resonances'' refers to $\kappa_{\circ}$ instead of $\kappa_{\circ}^{2}$, see, e.g., \cite{DZ}.\par
The definition \eqref{e_id} can be somewhat simplified because the condition $\kappa\in\CO_{-}$ is automatically satisfied by the points $\kappa_{\circ}$ for which $Q_{\kappa_{\circ}}$ is not injective:
\begin{lemma}\label{res}
$$
\mathsf{e}(A_{v,\rho})=\big\{\kappa_{\circ}^{2}\in\CO:\ker
(Q_{\kappa_{\circ}})\not =\{0\}\big\}\,.
$$
\end{lemma}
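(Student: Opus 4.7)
The plan is to show that the condition $\kappa_\circ\in\CO_-$ in the definition of $\mathsf{e}(A_{v,\rho})$ is redundant, i.e., that $\ker(Q_{\kappa_\circ})\neq\{0\}$ already forces $\kappa_\circ\notin\CO_+\cup\RE$. I would rule out the three possibilities ($\kappa_\circ\in\CO_+$, $\kappa_\circ=0$, $\kappa_\circ\in\RE\setminus\{0\}$) separately.

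For $\kappa_\circ\in\CO_+$ the conclusion is immediate from Lemma \ref{ZB}: the identification $Q_\kappa=1-M_\kappa$ (verified directly by comparing \eqref{acQ} with \eqref{Mk}) combined with $Z_\sb=\CO_+$ gives $Q_\kappa^{-1}\in\B(L^2(\Omega)\oplus H^{-1/2}(\Gamma))$ for every $\kappa\in\CO_+$, so $\ker(Q_\kappa)=\{0\}$ throughout the upper half plane. For $\kappa_\circ=0$ the proof of Lemma \ref{ZB} already produces an explicit bounded inverse of $1-M_0=Q_0$, using that $\widetilde\rho_\ell^{\,-1}\notin[-1/2,1/2)$ while $\sigma(K_0^{*})\subseteq[-1/2,1/2)$; hence $\ker(Q_0)=\{0\}$.

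The main case is $\kappa_\circ\in\RE\setminus\{0\}$, which I would handle via the LAP. Fix two bounded open sets $B_1,B_2\subset\RE^3$ and consider the compressed resolvent $\kappa\mapsto 1_{B_2}\widetilde R_\kappa 1^*_{B_1}$. Since $B_j$ is bounded, the weights on $L^2_\alpha(\RE^3)$ and $L^2_{-\alpha}(\RE^3)$ are uniformly bounded above and below on $B_j$, so $1^*_{B_1}\in\B(L^2(B_1),L^2_\alpha(\RE^3))$ and $1_{B_2}\in\B(L^2_{-\alpha}(\RE^3),L^2(B_2))$. Composing with Corollary \ref{pseudo-LAP} yields a $\B(L^2(B_1),L^2(B_2))$-valued map that is continuous on $\CO_+\cup\RE\setminus\{0\}$. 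On the other hand, the discussion following Lemma \ref{Fredh} (using the injectivity of the operator on the left of $Q_\kappa^{-1}$ and the density of the range of the operator on its right in formula \eqref{mero}) identifies the poles of the meromorphic extension of this compressed resolvent with exactly the points $\kappa_\circ$ satisfying $\ker(Q_{\kappa_\circ})\neq\{0\}$. Continuity on $\RE\setminus\{0\}$ therefore excludes any such pole.

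The delicate step is the third paragraph, where one has to bridge the weighted $L^2$-continuity provided by the LAP with the $\B(L^2(B_1),L^2(B_2))$-valued meromorphicity of the extended resolvent coming from Theorem \ref{RD} and Lemma \ref{Fredh}; both descriptions must agree on $\CO_+$ by uniqueness of analytic continuation, and the boundedness of $B_1,B_2$ is exactly what is needed to transfer continuity between the two functional settings. Once this is set up, the three cases combine to give the stated equality.
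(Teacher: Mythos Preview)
Your proposal is correct and follows essentially the same route as the paper: use the LAP (Corollary \ref{pseudo-LAP}) to obtain continuity of the compressed resolvent $1_{B_2}\widetilde R_\kappa 1^*_{B_1}$ on $\CO_+\cup\RE\setminus\{0\}$, identify it with the meromorphic function \eqref{mero} on $\CO_+$, conclude that no pole of $\kappa\mapsto Q_\kappa^{-1}$ lies on $\RE\setminus\{0\}$, and dispose of $\kappa=0$ via the explicit inverse of $Q_0$ from the proof of Lemma \ref{ZB}. The only cosmetic difference is that you treat $\kappa_\circ\in\CO_+$ separately via Lemma \ref{ZB}, whereas the paper has already recorded this in Lemma \ref{Fredh} (the poles lie in $\CO\setminus\CO_+$) and its LAP argument covers $\CO_+$ anyway.
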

\begin{proof} 
By Lemma \ref{pseudo-LAP}, $\CO_{+}\ni\kappa\mapsto 1_{B_{2}}(-A_{v,\rho}-\kappa^{2})^{-1}1^{*}_{B_{1}}$  has a continuous $\B(L^{2}(B_{1}),L^{2}(B_{2}))$-valued  extension to $\CO_{+}\cup\RE\backslash\{0\}$. Since the meromorphic $\B(L^{2}(B_{1}),L^{2}(B_{2}))$-valued function \eqref{mero}
coincides with $\mapsto 1_{B_{2}}(-A_{v,\rho}-\kappa^{2})^{-1}1^{*}_{B_{1}}$ when restricted to $\CO_{+}$, it is continuous on $\CO_{+}\cup\RE\backslash\{0\}$. Hence, the only possible real pole of $\kappa\mapsto Q_{\kappa}^{-1}$ is $\kappa=0$. However, $Q_{0}$ has a bounded inverse (see the proof of Lemma \ref{ZB}) and the proof is done. 
\end{proof}
\section{The $\ve$-dependent case} 
Given the $(0,+\infty)$-valued functions $\ve\mapsto v_{\ell}^{2}(\ve)=v^{2}(\ve)$, $\ve\mapsto \rho_{\ell}(\ve)=\rho(\ve)$, we consider the family of acoustic operators 
$$
A(\varepsilon):={v^{2}(\ve)\rho(\ve)}\nabla\!\cdot\!{\rho(\ve)}^{-1}\nabla\,,\qquad 0<\epsilon\le 1\,,
$$ 
with domain
$$
{\dom}(  A(\ve))  =\big\{  u\in H^{1}(\RE^{3})\cap
L_{\Delta}^{2}(  \mathbb{R}^{3}\backslash\Gamma(\ve))  \,:\, \rho(\ve)\,\gamma_{1}^{\mathrm{ex}}(\ve)u=\gamma_{1}^{\mathrm{in}}(\ve)u\big\}\,.
$$
Here $\gamma^{\-/\+}_{1}(\ve)$ denote the inner/exterior Neumann traces at $\Gamma(\ve):=\partial\Omega(\ve)$, where
$$
\Omega(\ve)=\Omega_{1}(\ve)\cup\dots\cup\Omega_{n}(\ve)\,,\qquad \Omega_{\ell}(\ve)=y_{\ell}+\Omega_{\circ}(\ve)\,,\qquad y_{1},\dots, y_{n}\in\RE^{3}\,,
$$
$$
\Omega_{\circ}(\ve):=\{\ve x,\  x\in\Omega_{\circ}\}
$$
and the parameter $\ve$ is sufficiently small so that  $\Omega_{i}(\ve)\cap\Omega_{j}(\ve)=\varnothing$ whenever $i\not=j$. \par
By Example \ref{cup} and Lemma \ref{res}, the resonance set of $A(\ve)$ is  
$$
\mathsf e(A(\ve))=\big\{\kappa^{2}(\ve)\in\CO:\ker(\widetilde Q_{\kappa(\ve)}(\ve))\not=\{0\}\,\big\}\,, 
$$ 
where
$$
\widetilde Q_{\kappa}(\ve):\big(\!\oplus_{\ell=1}^{n}L^{2}(\Omega_{\circ}(\ve)\big)
\!\oplus \!\big(\!\oplus_{\ell=1}^{n}H^{-1/2}(\Gamma_{\circ}(\ve)\big)
\to
\big(\!\oplus_{\ell=1}^{n}L^{2}(\Omega_{\circ}(\ve)\big)
\!\oplus \!\big(\!\oplus_{\ell=1}^{n}H^{-1/2}(\Gamma_{\circ}(\ve)\big),
$$
\begin{align*}
&
\widetilde Q_{\kappa}(\ve)
\\
=
&
\begin{bmatrix}\text{mat}(v^{2}(\ve)+(v^{2}(\ve)-1)\kappa^{2}1_{\Omega_{\circ}(\ve)}\Psi_{\ell m}R_{\kappa}1_{\Omega_{\circ}(\ve)}^{*})&({v^{2}(\ve)}-1)\kappa^{2}
\text{mat}(1_{\Omega_{\circ}(\ve)}\Psi_{\ell m}\SL^{\circ}_{\kappa}(\ve))
\\
\frac{1-\rho(\ve)}{1+\rho(\ve)}\,\text{mat}(\gamma^{\circ\-}_{1}(\ve)1_{\Omega_{\circ}(\ve)}\Psi_{\ell m}R_{\kappa}1_{\Omega_{\circ}(\ve)}^{*})&\text{mat}(\frac12+\frac{1-\rho(\ve)}{1+\rho(\ve)}\,\gamma^{\circ}_{1}(\ve)\Psi_{\ell m}\SL^{\circ}_{\kappa}(\ve))
\end{bmatrix}
\end{align*}
with $\SL^{\circ}_{\kappa}(\ve)$ and $\gamma^{\circ}_{1}(\ve)$ denoting  the operators corresponding to the domain $\Omega_{\circ}(\ve)$. 
\par
Given the map $\varphi_{\ve}(x):=\ve x$ such that $\varphi_{\ve}({\Omega_{\circ}})= {\Omega_{\circ}(\ve)}$, we define
$$
\Psi_{\ve}:L^{2}(\RE^{3})\to L^{2}(\RE^{3})\,,\qquad 
\Psi_{\ve}u:=u\circ\varphi_{\ve}
$$
$$
\Phi_{\ve}:L^{2}(\Omega_{\circ}(\ve))\to L^{2}(\Omega_{\circ})\,,\qquad 
\Phi_{\ell}u:=u\circ(\varphi_{\ve}|\Omega_{\circ})
$$ 
and 
$$
\Phi^{\partial}_{\ve}:L^{2}(\Gamma_{\circ}(\ve))\to L^{2}(\Gamma_{\circ})\,,\qquad 
\Phi^{\partial}_{\ve}\phi:=\phi\circ(\varphi_{\ve}|\Gamma_{\circ})\,.
$$
One has the identities
$$
1_{\Omega_{\circ}(\ve)}=\Phi_{\ve}^{-1}1_{\Omega_{\circ}}\Psi_{\ve}\,,
\qquad
1^{*}_{\Omega_{\circ}(\ve)}=\Psi_{\ve}^{-1}1^{*}_{\Omega_{\circ}}\Phi_{\ve}\,,
$$
$$
\Delta^{\max}_{\Omega_{\circ}(\ve)}=\ve^{-2}\Phi_{\ve}^{-1}\Delta^{\max}_{\Omega_{\circ}}\Phi_{\ve}\,,\qquad
R_{\kappa}=\ve^{2}\Psi_{\ve}^{-1}R_{\ve\kappa}\Psi_{\ve}\,,
$$
$$
\SL^{\circ}_{\kappa}(\ve)=\ve \Psi_{\ve}^{-1}\SL^{\circ}_{\ve\kappa}\Phi^{\partial}_{\ve}\,,
\qquad
\gamma_{1}^{\circ}(\ve)=\ve^{-1}(\Phi^{\partial}_{\ve})^{-1}\gamma^{\circ}_{1}\Psi_{\ve}\,.
$$
These relations give, by $\Psi_{\ve}\Psi_{\ell m}\Psi_{\ve}^{-1}=\Psi_{\ell m}$,
\begin{align*}
&\widetilde Q_{\kappa}(\ve)=\begin{bmatrix}\Phi_{\ve}&0\\
0&\Phi^{\partial}_{\ve}
\end{bmatrix}^{-1}
\times\\&\times
\begin{bmatrix}\text{mat}(v^{2}(\ve)+(v^{2}(\ve)-1)\ve^{2}\kappa^{2}1_{\Omega_{\circ}}\Psi_{\ell m}R_{\ve\kappa}1_{\Omega_{\circ}}^{*})&({v^{2}(\ve)}-1)\ve\kappa^{2}
\text{mat}(1_{\Omega_{\circ}}\Psi_{\ell m}\SL^{\circ}_{\ve\kappa})
\\
\frac{1-\rho(\ve)}{1+\rho(\ve)}\,\ve\,\text{mat}(\gamma^{\circ\-}_{1}1_{\Omega_{\circ}}\Psi_{\ell m}R_{\ve\kappa}1_{\Omega_{\circ}}^{*})&\text{mat}(\frac12+\frac{1-\rho(\ve)}{1+\rho(\ve)}\,\gamma^{\circ}_{1}\Psi_{\ell m}\SL^{\circ}_{\ve\kappa})
\end{bmatrix}
\times\\&\times
\begin{bmatrix}\Phi_{\ve}&0\\
0&\Phi^{\partial}_{\ve}
\end{bmatrix}.
\end{align*}
Noticing that for a block operator matrix $\begin{bmatrix}A&B\\C&D\end{bmatrix}$ there holds  
$$
\ker
\begin{bmatrix}A&B\\C&D\end{bmatrix}
\not=\{0\}\quad\Leftrightarrow
\quad\ker
\begin{bmatrix}\ve^{-a}A&\ve^{-b}B\\ \ve^{-c}\,C&\ve^{-d} D\end{bmatrix}
\not=\{0\}
$$
for any $\ve>0$ and $a,b,c,d$ such that $a+d=b+c$, 
we get, by Lemma \ref{res},
\begin{align*}
\mathsf e(A(\ve))=&\{\kappa^{2}(\ve)\in\CO:\ker\big(M^{(a,b,c)}_{\kappa(\ve)}(\ve)\big)\not=\{0\}\,\}\,,
\end{align*}
where
$$
M^{(a,b,c)}_{\kappa}(\ve): \big(\oplus_{\ell=1}^{n}L^{2}(\Omega_{\circ})\big)
\oplus \big(\oplus_{\ell=1}^{n} H^{-1/2}(\Gamma_{\circ})\big)\to 
\big(\oplus_{\ell=1}^{n}L^{2}(\Omega_{\circ})\big)
\oplus \big(\oplus_{\ell=1}^{n} H^{-1/2}(\Gamma_{\circ})\big)\,,
$$
\begin{align*}
&M_{\kappa}^{(a,b,c)}(\ve)\\
:=&
\begin{bmatrix}\text{mat}(v^{2}(\ve)\ve^{-a}+(v^{2}(\ve)-1)\ve^{-a+2}\,\kappa^{2}1_{\Omega_{\circ}}\Psi_{\ell m}R_{\ve\kappa}1_{\Omega_{\circ}}^{*})&({v^{2}(\ve)}-1)\ve^{-b+1}\,\kappa^{2}
\text{mat}(1_{\Omega_{\circ}}\Psi_{\ell m}\SL^{\circ}_{\ve\kappa})
\\
\frac{1-\rho(\ve)}{1+\rho(\ve)}\,\ve^{-c+1}\,\text{mat}(\gamma^{\circ\-}_{1}1_{\Omega_{\circ}}\Psi_{\ell m}R_{\ve\kappa}1_{\Omega_{\circ}}^{*})&\ve^{-(b+c-a)}\,\text{mat}(\frac12+\frac{1-\rho(\ve)}{1+\rho(\ve)}\,
\gamma^{\circ}_{1}\Psi_{\ell m}\SL^{\circ}_{\ve\kappa})
\end{bmatrix}.
\end{align*}
Therefore, if there exists a map 
$$
\ve\mapsto(\kappa(\ve), u(\ve)\oplus\phi(\ve))\in \CO\times \big(\oplus_{\ell=1}^{n}L^{2}(\Omega_{\circ})\big)\oplus\big(
\oplus_{\ell=1}^{n} H^{-1/2}(\Gamma_{\circ})\big)
$$ 
such that
\be\label{LF}
u(\ve)\not=0\,,\quad\phi(\ve)\not=0\,,\qquad M^{(a,b,c)}_{\kappa(\ve)}(\ve)
\begin{bmatrix}u(\ve)\\\phi(\ve)\end{bmatrix}=\begin{bmatrix} 0\\
 0\end{bmatrix},
\ee
then
$$
\kappa^{2}(\ve)\in\mathsf e(A(\ve))\,.
$$
\section{Resonances $\ve$-expansions}
In this Section, we look for the resonance of $A(\ve)$ in the case $n=1$; hence, we set $\Omega_{\circ}=\Omega$ and avoid the use of the apex ${\,}^{\circ}$ in denoting operators. We study the four cases, differing by the behavior for $\ve\ll 1$ of the analytic functions $\ve\mapsto v^{2}(\ve)\chi_{\Omega}$ and $\ve\mapsto \rho(\ve)\chi_{\Omega}$, indicated in \eqref{caso1}-\eqref{caso4} in the Introduction. We denote by $A_{j}(\ve)$, $j=1,2,3,4$, the corresponding acoustic operators. 
\par
Inspired by  \cite{Rose}, in order to solve \eqref{LF}, we use the following implicit function argument: let $$F:U\times\CO\times  \Xi
\to \Xi$$ be an analytic map, where $U\subset\CO$ is an open neighborhood of $0$ and $\Xi$ is a complex Hilbert space with scalar product $\langle\cdot,\cdot\rangle$ and corresponding norm $\|\cdot\|$. We look for an analytic map 
$$
f:U_{0}\to\CO\times  \left(\Xi\backslash\{0\}\right)\,,
$$
where $U_{0}\subset\CO$ is a small open neighborhood of $0$, which uniquely solve the equation 
$$
{F}(z,f(z))=0\,.
$$ Suppose there exists $(\zeta_{0}, \xi_{0})\in \CO\times \left(\Xi\backslash\{0\}\right)$ such that ${F}(0,\zeta_{0},\xi_{0})=0$. Then, given such a $(\zeta_{0}, \xi_{0})$, we define the map 
$$\Phi:U\times\CO\times  \Xi
\to \CO\times\Xi,\qquad\Phi(z,\zeta,\xi):=(\langle\xi_{0},\xi-\xi_{0}\rangle, {F}(z,\zeta,\xi))\,.
$$
Obviously, $\Phi(0,\zeta_{0}, \xi_{0})=(0,0)$. Let us denote by  $D_{0}F:\CO\times\Xi \to \Xi$ 
the Frechet  differential at $(0,\zeta_{0}, \xi_{0})$ of the map $(\zeta,\xi)\mapsto F(0,\zeta,\xi)$. By the implicit function theorem for Banach space-valued analytic functions (see, e.g., \cite{Whitt}), if the bounded linear map 
$$
D_{0}\Phi:\CO\times \Xi\to \CO\times \Xi\,,\qquad D_{0}\Phi(\zeta,\xi):=(\langle\xi_{0},\xi\rangle, D_{0}F(\zeta,\xi))
$$
is a bijection (and hence has a bounded inverse by the open mapping theorem), then there exists $U_{0}\subseteq U$, an open neighborhood of $0$,  and an unique analytic function 
$$
f: U_{0}\to \CO\times \Xi\,,\qquad f(z)\equiv(\zeta(z),\xi(z))\,,
$$ 
such that $\Phi(z,f(z))=(0,0)$. Therefore, by the definition of $\Phi$, 
$$
\begin{cases}
\langle\xi_{0},\xi(z)\rangle=\|\xi_{0}\|^{2}
\\
F(z,f(z))=0\,.
\end{cases}
$$ 
Hence,  by $\xi_{0}\in\Xi\backslash\{0\}$, one gets $\xi(z)\in\Xi\backslash\{0\}$. Furthermore, 
$$
f(z)=f(0)-(D_{0}\Phi)^{-1}(0, \partial_{0}F(\zeta_{0}))\, z+O(|z|^{2})\,,
$$
where $\partial_{0}F:\CO 
\to \Xi$ denotes the partial derivative with respect to the first variable $z$,  calculated at the point $(0,\zeta_{0}, \xi_{0})$. Let us remark that all the coefficients of the expansion of $f$ beyond the first one can be computed recursively, as in the finite dimensional case.
\subsection{Case 1. 
} 
Let us define the map
$$
{F}_{1}:U\times\CO\times  X
\to X
$$
\begin{align*}
&{F}_{1}(\ve,\kappa, u\oplus\phi):=M^{(2,1,1)}_{\kappa}(\ve)
\begin{bmatrix}u\\\phi\end{bmatrix}\\
=&\begin{bmatrix}v^{2}(\ve)\ve^{-2}+(v^{2}(\ve)-1)\kappa^{2}N_{\ve\kappa}&({v^{2}(\ve)}-1)\kappa^{2}1_{\Omega}\SL_{\ve\kappa})
\\
\frac{1-\rho(\ve)}{1+\rho(\ve)}\,\gamma^{\-}_{1}N_{\ve\kappa}&\frac12+\frac{1-\rho(\ve)}{1+\rho(\ve)}\,
\gamma_{1}\SL_{\ve\kappa}
\end{bmatrix}\begin{bmatrix}u\\\phi\end{bmatrix}
,
\end{align*}
where $X=L^{2}(\Omega)\oplus H^{-1/2}(\Gamma)$, 
$U\subset\CO$ is a small open neighborhood of the origin and $\ve\mapsto v^{2}(\ve)$ and $\ve\mapsto \rho(\ve)$ are analytic function as in \eqref{caso1}.\par We look for a solution $\ve\mapsto (\kappa(\ve),u(\ve)\oplus\phi(\ve))\in \CO_{-}\times (X\backslash\{0\})$ of the equation
\be\label{Fe1}
{F}_{1}(\ve,\kappa(\ve),u(\ve)\oplus\phi(\ve))=0\,.
\ee
By the analyticity of the operator-valued maps $z\mapsto N_{z}$ and $z\mapsto \SL_{z}$ (see, e.g.,  \cite{Minn}), writing
$$
N_{z}=N_{0}+N_{(1)}z+O(|z|^{2})\,,\qquad
\SL_{z}=\SL_{0}+\SL_{(1)}z+O(|z|^{2})\,,
$$
and by 
$$
\frac{1-\rho(\ve)}{1+\rho(\ve)}=-\frac{\rho_{1}}2\,\ve+O(\ve^{2})\,,
$$
one gets
\begin{align*}
&{F}_{1}(\ve,\kappa, u\oplus\phi)=\begin{bmatrix}
v^{2}-\kappa^{2}N_{0}&-\kappa^{2}\,
1_{\Omega}\SL_{0}
\\
0&\frac12
\end{bmatrix}
\begin{bmatrix}u\\ \phi\end{bmatrix}\\
&+
\begin{bmatrix}
v_{1}^{2}-\kappa^{3}N_{(1)}&-\kappa^{3}1_{\Omega}\SL_{(1)}\\
 -\frac12\,{\rho_{1}}\gamma^{\-}_{1}N_{0}&-\frac12\,{\rho_{1}}\gamma_{1}\SL_{0}
\end{bmatrix}
\begin{bmatrix}u\\ \phi\end{bmatrix}\ve+O(\ve^{2})
\end{align*}
Therefore, 
\begin{align*}
{F}_{1}(0,\kappa, u\oplus\phi)=0
\Leftrightarrow
\begin{cases}(v^{2}-\kappa^{2}N_{0})u-\kappa^{2}1_{\Omega}\SL_{0}\phi=0\\
\frac12\,\phi=0\end{cases}\!\!\!\!\!
\Leftrightarrow
\begin{cases}
(v^{2}-\kappa^{2}N_{0})u=0\\ 
\phi=0\,.
\end{cases}
\end{align*}
Hence,
$$
{F}_{1}(0,\pm{v}{\lambda ^{-1/2}}, e_{\lambda }\oplus 0)=0\,,
$$
whenever $e_{\lambda }$ is a normalized eigenfunction, with corresponding eigenvalue $\lambda>0 $, of the Newton operator $N_{0}$ corresponding to the connected domain $\Omega$.\par 
Now, we introduce the map 
\be\label{Phi1}
\Phi_{1}:U\times\CO\times  X
\to \CO\times X\,,\qquad \Phi_{1}(\ve,\kappa,u\oplus\phi):=(\langle e_{\lambda },u\rangle_{L^{2}(\Omega)}-1,{F}_{1}(\ve,\kappa,u\oplus\phi))\,.
\ee
Obviously, 
$$
\Phi_{1}(0,\pm{v}{\lambda ^{-1/2}}, e_{\lambda }\oplus 0)=(0,0)\,.
$$
Let us
denote by 
$$
D_{\pm}\Phi_{1}: \CO\times  X
\to \CO\times X
$$
the Frechet  differential at $(0,\pm{v}{\lambda ^{-1/2}}, e_{\lambda }\oplus 0)$ of the map $(\kappa,u\oplus\phi)\mapsto\Phi_{1}(0,\kappa,u\oplus\phi)$, i.e.,
$$
D_{\pm}\Phi_{1}(\kappa,u\oplus\phi)=\left(\langle e_{\lambda },u\rangle_{L^{2}(\Omega)},\big(\mp2v\lambda ^{1/2}\kappa e_{\lambda}+v^{2}(1-\lambda ^{-1}N_{0})u-v^{2}\lambda ^{-1}1_{\Omega}\SL_{0}\phi\big)\oplus
\frac12\,\phi\right)\,.
$$
Now, for any given $( z_{*},u_{*}\oplus\phi_{*})\in\CO\times X$,   we prove 
the existence of a unique solution $(\kappa, u\oplus\phi)\in\CO\times X$ to the equation 
\be\label{uni1}
D_{\pm}\Phi_{1}(\kappa,u\oplus\phi)=( z_{*},u_{*}\oplus\phi_{*})\,.
\ee 
Setting
$$
w:=u_{*}+2v^{2}\lambda ^{-1}1_{\Omega}\SL_{0}\phi_{*}\,, 
$$
equation \eqref{uni1} recasts as 
\be\label{uni1.1}
\begin{cases}
\langle e_{\lambda },u\rangle_{L^{2}(\Omega)}=z_{*}\\
\mp2v\lambda ^{1/2}\kappa e_{\lambda }+v^{2}\lambda ^{-1}(\lambda -N_{0})u =w\\
\frac12\,\phi=\phi_{*}\,.
\end{cases}
\ee
Introducing the orthogonal decompositions 
$$
u=\widehat u_{\lambda }\, e_{\lambda }+u_{\perp}\,,\quad w=\widehat w_{\lambda } e_{\lambda }+w_{\perp}\,,
$$
where
$$ 
\widehat u_{\lambda }:=\langle e_{\lambda },u\rangle_{L^{2}(\Omega)}\,,
\quad \widehat w_{\lambda }:=\langle e_{\lambda },w\rangle_{L^{2}(\Omega)}\,,\qquad\langle e_{\lambda },u_{\perp}\rangle_{L^{2}(\Omega)}=\langle e_{\lambda },w_{\perp}\rangle_{L^{2}(\Omega)}=0\,,
$$
\eqref{uni1} rewrites as 
$$
\begin{cases}
\widehat  u_{\lambda }= z_{*}\\
\mp2v\lambda ^{1/2}\kappa=\widehat  w_{\lambda }\\
v^{2}\lambda ^{-1}(\lambda -N_{0})u_{\perp}=w_{\perp}\\
\frac12\,\phi=\phi_{*}\,.
\end{cases}
$$
We define the linear operator
$$
(\lambda -N_{0})_{\perp}:\ker(\lambda -N_{0})^{\perp}\to \ker(\lambda -N_{0})^{\perp}\,,
\quad
(\lambda -N_{0})_{\perp}:=(\lambda -N_{0})|\ker(\lambda -N_{0})^{\perp}\,;
$$
since $N_{0}$ is compact and self-adjoint (hence, $\ker(\lambda -N_{0})^{\perp}=\ran(\lambda -N_{0})$), it is a bounded bijection.\par If $\lambda $ is a simple eigenvalue then $w_{\perp}\in \ker(\lambda -N_{0})^{\perp}$ and the unique solution of   \eqref{uni1} is given by 
\be\label{sol1}
\begin{cases}
\kappa=\mp(2v\lambda ^{1/2})^{-1}
\langle e_{\lambda }, u_{*}+2v^{2}\lambda ^{-1}1_{\Omega}\SL_{0}\phi_{*}\rangle_{L^{2}(\Omega)}\\
u= z_{*}+{\lambda }{v^{-2}}\,(\lambda -N_{0})_{\perp}^{-1}\big(u_{*}+2v^{2}\lambda ^{-1}1_{\Omega}\SL_{0}\phi_{*}\big)_{\perp}
\\
\phi=2\phi_{*}\,.
\end{cases}
\ee
Therefore, whenever $\lambda$ is a simple eigenvalue, $D_{\pm}\Phi_{1}$ is a bounded bijection and,  by the implicit function argument recalled at the beginning of the Section, in a sufficiently small neighborhood of $0\in\RE$, there is a $\CO\times (X\backslash\{0\})$-valued, analytic  function $\ve\mapsto (\kappa_{\pm}(\ve), u_{\pm} (\ve)\oplus\phi_{\pm} (\ve))$,
$$
\kappa_{\pm}(\ve)=\pm\frac{v}{\lambda ^{1/2}}+\sum_{n=1}^{+\infty}\kappa_{\pm}^{(n)}\ve^{n}
\,,\qquad
u_{\pm} (\ve)=e_{\lambda }+\sum_{n=1}^{+\infty} u_{\pm} ^{(n)}\ve^{n}\,,\qquad
\phi_{\pm} (\ve)=\sum_{n=1}^{+\infty}\phi_{\pm} ^{(n)}\ve^{n}
$$
which uniquely solves \eqref{Fe1}. One can explicitly compute all the coefficients appearing in the expansions. In particular,
 $$
\big(\kappa_{\pm}^{(1)},u_{\pm} ^{(1)}\oplus\phi_{\pm} ^{(1)}\big)=-\left(D_{\pm}\Phi_{1}\right)^{-1}
\left(
\left.\frac{d\ }{d\ve}\right|_{\ve=0}\Phi_{1}(\ve,\pm v\lambda ^{-1/2},e_{\lambda },0)
\right).
$$
By 
$$
N_{(1)}u:=\left.\frac{d\ }{d\kappa}\right|_{\kappa=0} N_{\kappa}u=\frac{i}{4\pi}\,\langle 1,u\rangle_{L^{2}(\Omega)}\,1\,,
$$ 
one obtains
\begin{align*}
&D_{\pm}\Phi_{1}(\kappa_{\pm}^{(1)}, u_{\pm} ^{(1)}\oplus\phi_{\pm} ^{(1)})\\
=&\left(0,
\big(-
{v_{1}^{2}e_{\lambda}\pm i\,(4\pi)^{-1}v^{3}}{\lambda ^{-3/2}} \langle 1,e_{\lambda }\rangle_{L^{2}(\Omega)} 1\big)\oplus\frac12\,\rho_{1} \lambda \gamma_{1}^{\-}e_{\lambda }\right)\,.
\end{align*}
Hence, by \eqref{sol1} and by
$$
\langle e_{\lambda }, 1_{\Omega}\SL_{0}\gamma_{1}^{\-}e_{\lambda }\rangle_{L^{2}(\Omega)}=
\langle \gamma_{0}^{\-}N_{0}e_{\lambda }, \gamma_{1}^{\-}e_{\lambda }\rangle_{L^{2}(\Gamma)}=
\lambda \langle \gamma_{0}^{\-}e_{\lambda }, \gamma_{1}^{\-}e_{\lambda }\rangle_{L^{2}(\Gamma)}\,,
$$
there follows
\begin{align*}
\kappa_{\pm}^{(1)}=\mp\,\frac{v_{1}^{2}}{2v\lambda ^{1/2}}
- i\, \frac{v^{2}}{8\pi\lambda ^{2}}\,\langle 1,e_{\lambda }\rangle_{L^{2}(\Omega)} ^{2}\mp \frac12\,\rho_{1} v{\lambda ^{1/2}}\,\langle\gamma_{0}^{\-}e_{\lambda },\gamma_{1}^{\-}e_{\lambda }\rangle_{L^{2}(\Gamma)}\,.
\end{align*}
This gives 
$$
\kappa_{\pm}(\ve)=\pm \left(\frac{v}{\lambda ^{1/2}}
+\!\left(\frac{v_{1}^{2}}{2v\lambda ^{1/2}}-
\frac12\,\rho_{1} v{\lambda ^{1/2}}\,\langle\gamma_{0}^{\-}e_{\lambda },\gamma_{1}^{\-}e_{\lambda }\rangle_{L^{2}(\Gamma)}\right)\!\ve\!\right)
- i\, \frac{v^{2}}{8\pi\lambda ^{2}}\,\langle 1,e_{\lambda }\rangle^{2}_{L^{2}(\Omega)} \ve+O(\ve^{2})
$$
and so $k^{2}_{\pm}(\ve)\in\mathsf e(A(\ve))$, where 
$$
k^{2}_{\pm}(\ve)=\frac{v^{2}}{\lambda }-\left(\frac{v_{1}^{2}}{\lambda }+
\rho_{1} v^{2}\langle\gamma_{0}^{\-}e_{\lambda },\gamma_{1}^{\-}e_{\lambda }\rangle_{L^{2}(\Gamma)}\right) \ve
\mp i\, \frac{v^{3}}{8\pi\lambda ^{5/2}}\,\langle 1,e_{\lambda }\rangle^{2}_{L^{2}(\Omega)} \ve
+O(\ve^{2})\,.
$$
Let us notice that, in the case where  $v=1$ and $\rho_{1} =v_{1}=0$, the above expansion coincides with the one provided in \cite{ZAMP}.
\begin{remark} Since the eigenfunction $e_{\lambda }$ is real-valued, the scalar products $\langle\gamma_{0}^{\-}e_{\lambda },\gamma_{1}^{\-}e_{\lambda }\rangle_{L^{2}(\Gamma)}$ and $\langle 1, e_{\lambda }\rangle_{L^{2}(\Omega)}$ are real-valued as well. 
\end{remark}
\begin{remark} Regarding our hypothesis on the simplicity of the eigenvalue $\lambda$, since $N_{0}$ is the inverse of a Laplacian in $L^{2}(\Omega)$ with some kind of (not local) boundary conditions (see \cite{KalSur}), we conjecture, by the results in \cite[Chapter 6]{Henry}, that such an assumption holds true for a generic domain $\Omega$.
\end{remark}
The study of Case 2 and of the successive ones requires some preliminaries. 
Setting ${K}_{z}:=\gamma_{0}\DL_{z}$, the map $z\mapsto {K}_{z}$ is  $\B(H^{1/2}(\Gamma_{\circ}))$-valued analytic and (see \cite[Lemma 5.3]{Minn})
$$
{K}_{z}={K}_{0}+{K}_{(2)}\,z^{2}+{K}_{(3)}\,z^{3}+O(|z|^{4})\,.
$$
Defining  the scalar product $[\cdot,\cdot]_{\frac12}$ in $H^{1/2}(\Gamma)$ by 
$$[\phi,\varphi]_{\frac12}:=\langle {S_{0}}^{-1}\phi,\varphi\rangle_{-\frac12,\frac12} \,,
$$
where
$$
S_{0}:H^{-1/2}(\Gamma_{\circ})\to H^{1/2}(\Gamma_{\circ})\,,\qquad
S_{0}:=\gamma_{0}\SL_{0}\,,
$$
we introduce the orthogonal (with respect $[\cdot,\cdot]_{\frac12}\,$) projector
$$
P_{0}\phi:=c_{\Omega}^{-1}\,[1,\phi]_{\frac12}\,1 \,,\qquad c_{\Omega}:=[1,1]_{\frac12}\,.
$$
It is known that  $-\frac12$ is a simple eigenvalues of $K_{0}$ with the constant function $1$ as the corresponding eigenfunction, so that  
$$
P_{0}\left(\frac12+{K}_{0}\right)P_{0}=0\,.
$$
Furthermore (see, \cite[Lemmata 2.3 and 2.4]{Minn}),
$$
P_{0}{K}_{(2)}P_{0}=-\frac{1}{\omega^{2}_{M}}\,P_{0}
\,,\qquad 
\omega_{M}^{2}:=\frac{c_{\Omega}}{|\Omega|} \,,
$$
and 
$$
P_{0}{K}_{(3)}P_{0}=-i\,\frac{|\Omega|}{4\pi}\,P_{0}\,.
$$
By the identity, 
\be\label{id}
{K}_{0}S_{0}=S_{0}{K}_{0}^{*}\equiv S_{0}\gamma_{1}\SL_{0}\,,
\ee
$-\frac12$ is a simple eigenvalue of $\gamma_{1}\SL_{0}$ with eigenfunction   ${S_{0}}^{-1}1$ and 
$$
P_{0}^{*}\left(\frac12+\gamma_{1}\SL_{0}\right)P_{0}^{*}=0\,. 
$$
Here ${\, }^{*}$ denotes the adjoint with respect to the $H^{-1/2}(\Gamma)$-$H^{1/2}(\Gamma)$ duality; therefore,
$$
P_{0}^{*}\phi=c_{\Omega}^{-1}\,\langle1,\phi\rangle_{\frac12,-\frac12} \,{S_{0}}^{-1}1\,.
$$
Furthermore,
\be\label{k2k3}
P_{0}^{*}K^{*}_{(2)}P_{0}^{*}=-\frac{1}{\omega^{2}_{M}}\,P_{0}^{*}
\,,\qquad 
P_{0}^{*}K^{*}_{(3)}P_{0}^{*}=i\,\frac{|\Omega|}{4\pi}\,P_{0}^{*}\,.
\ee
Defining the scalar product $[\cdot,\cdot]_{-\frac12}$ in $H^{-1/2}(\Gamma)$ by 
$$[\phi,\varphi]_{-\frac12}:=\langle S_{0}\phi,\varphi\rangle_{\frac12,-\frac12}\,,
$$ 
we introduce the orthogonal (with respect $[\cdot,\cdot]_{-\frac12}\,$) projector
$$
P\phi:=\widehat c_{\Omega}^{\ -1}\,[{S_{0}}^{-1}1,\phi]_{-\frac12}\,{S_{0}}^{-1}1 \,,\qquad \widehat c_{\Omega}:=[{S_{0}}^{-1}1,{S_{0}}^{-1}1]_{-\frac12}\,.
$$
One has 
$$
\widehat c_{\Omega}
=\langle 1,{S_{0}}^{-1}1\rangle_{\frac12,-\frac12}=
\langle {S_{0}}^{-1}1,1\rangle_{-\frac12,\frac12}=
c_{\Omega}
$$
and hence
$$
P\phi=c_{\Omega}^{\ -1}\,[{S_{0}}^{-1}1,\phi]_{-\frac12}\,{S_{0}}^{-1}1=
c_{\Omega}^{\ -1}\,
\langle 1,\phi\rangle_{\frac12,-\frac12}
\,{S_{0}}^{-1}1=P_{0}^{*}\phi\,.
$$ 
By \eqref{id},
\begin{align*}
[\gamma_{1}\SL_{0}\phi,\varphi]_{-\frac12}=&\langle S_{0}\gamma_{1}\SL_{0}\phi,\varphi\rangle_{\frac12,-\frac12}=
\langle \phi, K_{0}S_{0}\varphi\rangle_{-\frac12,\frac12}\\
=&
\langle S_{0}^{-1}S_{0}\phi,K_{0}S_{0}\varphi\rangle_{\frac12,-\frac12}=
[\phi,S_{0}^{-1}K_{0}S_{0}\varphi]_{-\frac12}\\
=&[\phi,\gamma_{1}\SL_{0}\varphi]_{-\frac12}
\end{align*}
and thus $\gamma_{1}\SL_{0}$ is self-adjoint in the Hilbert space given by $H^{-1/2}(\Gamma)$ equipped with the scalar product $[\cdot,\cdot]_{-\frac12}$; such a Hilbert space will be denoted by 
$\widehat H^{-1/2}(\Gamma)$. Since $\gamma_{1}\SL_{0}$ is self-adjoint and compact (hence, 
$\ran\left(\frac12+\gamma_{1}\SL_{0}\right)=\ker\left(\frac12+\gamma_{1}\SL_{0}\right)^{\perp}$),  
$$
\left(\frac12+\gamma_{1}\SL_{0}
\right)_{\!\!\perp}:\ker\!\left(\frac12+\gamma_{1}\SL_{0}\right)
^{\!\!\perp}\to\ker\!\left(\frac12+\gamma_{1}\SL_{0}\right)
^{\!\!\perp}
$$
is a bounded bijection, where we introduced the shorthand notation 
$$\left(\frac12+\gamma_{1}\SL_{0}
\right)_{\!\!\perp}:=P_{\perp}\left(\frac12+\gamma_{1}\SL_{0}
\right)P_{\perp}\,,\qquad P_{\perp}:=(1-P)\,.
$$ 
Finally, we introduce the Hilbert space 
$$\widehat X:=L^{2}(\Omega)\oplus \widehat H^{-1/2}(\Gamma)$$
with scalar product 
$$
\langle u_{1}\oplus\phi_{1}, u_{2}\oplus\phi_{2}\rangle_{\widehat X}:=\langle u_{1},u_{2}\rangle_{L^{2}(\Omega)}+[\phi_{1},\phi_{2}]_{-\frac12}\,.
$$
\subsection{Case 2. 
} 
Let us define the map
$$
{F}_{2}:U\times \CO\times  \widehat X
\to \CO\times  \widehat X
$$
\begin{align*}
&{F}_{2}(\ve,\kappa, u\oplus\phi):=M^{(0,1,1)}_{\kappa}(\ve)
\begin{bmatrix}u\\P\phi+\ve^{2}P_{\perp}\phi\end{bmatrix}\\
=&\begin{bmatrix}v^{2}(\ve)+(v^{2}(\ve)-1)\kappa^{2}N_{\ve\kappa}&
(v^{2}(\ve)-1)\kappa^{2}1_{\Omega}\SL_{\ve\kappa}
\\
\frac{1-\rho(\ve)}{1+\rho(\ve)}\,\gamma^{\-}_{1}N_{\ve\kappa}&\ve^{-2}
\left(\frac12+\frac{1-\rho(\ve)}{1+\rho(\ve)}\,
\gamma_{1}\SL_{\ve\kappa}\right)\end{bmatrix}
\begin{bmatrix}u\\P\phi+\ve^{2}P_{\perp}\phi\end{bmatrix},
\end{align*}
where $U\subset\CO$ is a small open neighborhood of the origin and $\ve\mapsto v^{2}(\ve)$ and $\ve\mapsto \rho(\ve)$ are analytic function as in \eqref{caso2}.\par
We look for a solution $\ve\mapsto (\kappa(\ve), u(\ve)\oplus\phi(\ve))\in\CO_{-}\times (\widehat X\backslash\{0\})$ of the equation
\be\label{Fe2} 
{F}_{2}(\ve,\kappa(\ve), u(\ve)\oplus\phi(\ve))=0\,.
\ee
By $\gamma_{1}\SL_{z}={K}_{-\bar z}^{*}$, one gets 
$$
\gamma_{1}\SL_{z}=\gamma_{1}\SL_{0}+K^{*}_{(2)}\,{z}^{2}-K^{*}_{(3)}\,{z}^{3}+O(|z|^{4})\,.
$$
Hence, by
$$
\frac{1-\rho(\ve)}{1+\rho(\ve)}=1-2\rho\ve^{2}+O(\ve^{3})\,,
$$
one obtains
\begin{align*}
&\ve^{-2}\left(\frac12+\frac{1-\rho(\ve)}{1+\rho(\ve)}\ \gamma_{1}\SL_{\ve\kappa}\right)
\\=&
\ve^{-2}\left(\frac12+\gamma_{1}\SL_{0}
\right)_{\!\!\perp}+
\big(\kappa^{2}K_{(2)}^{*}-2\rho\, \gamma_{1}\SL_{0}\big) -
\kappa^{3}K_{(3)}^{*}\ve+O(\ve^{2})\,.
\end{align*}
This gives (notice that $\gamma_{1}^{\-}N_{(1)}=0$)
\begin{align*}
{F}_{2}(\ve,\kappa, u\oplus\phi)=&
\begin{bmatrix}1&0\\\gamma_{1}^{\-}N_{0}&
\left(\frac12+\gamma_{1}\SL_{0}
\right)_{\!\!\perp}+
\big(\kappa^{2}K_{(2)}^{*}-2\rho\,\gamma_{1}\SL_{0}\big)P\end{bmatrix}
\begin{bmatrix}u\\ \phi\end{bmatrix}
\\+&
\begin{bmatrix}v^{2}&\kappa^{2}v_{1}^{2}1_{\Omega}\SL_{0}P\\
0&-\kappa^{3}K_{(3)}^{*}P\end{bmatrix}\begin{bmatrix}u\\ \phi\end{bmatrix}\ve+
O(\ve^{2})
\,.
\end{align*}
Therefore, ${F}_{2}(0,\kappa, u\oplus\phi)=0$ if and only if $(\kappa, u,\phi)$ solves 
$$
\begin{cases}
u=0\\
\left(\frac12+\gamma_{1}\SL_{0}
\right)_{\!\!\perp}\phi
+(\kappa^{2}K_{(2)}^{*}-2\rho\, \gamma_{1}\SL_{0})P\phi
=0\,.\end{cases}
$$
This is equivalent to  
$$
\begin{cases}
u=0\\
P(\kappa^{2}K_{(2)}^{*}-2\rho\, \gamma_{1}\SL_{0})P\phi
=0\\
\left(\frac12+\gamma_{1}\SL_{0}
\right)_{\!\!\perp}\phi
+P_{\perp}(\kappa^{2}K_{(2)}^{*}-2\rho\,\gamma_{1}\SL_{0})P\phi
=0\,,
\end{cases}
$$
i.e.,
$$
\begin{cases}
u=0\\
\left(\rho-\frac{\kappa_{0}^{2}}{\omega^{2}_{M}}\right)P\phi=0\\
\left(\frac12+\gamma_{1}\SL_{0}
\right)_{\!\!\perp}\phi
+\kappa^{2}P_{\perp}K_{(2)}^{*}P\phi
=0\,.\end{cases}
$$
Thus, ${F}_{2}(0,\pm\kappa_{\circ},0\oplus\phi_{\circ})=0$, whenever 
$$
\kappa_{\circ}=\sqrt\rho\ \omega_{M}\,,\qquad \phi_{\circ}={S_{0}}^{-1}1-{\rho\,\omega^{2}_{M}}\varphi_{\circ}^{\perp}\,,
$$
$$\varphi_{\circ}^{\perp}:=\left(\frac12+\gamma_{1}\SL_{0}
\right)_{\!\!\perp}^{-1}
P_{\perp}K_{(2)}^{*}{S_{0}}^{-1}1\,.
$$
Now we introduce the map
\be\label{Phi2}
\Phi_{2}:U\times\CO\times \widehat X
\to \CO\times \widehat X\,,\qquad \Phi_{2}(\ve,\kappa,u\oplus\phi)
:=([\phi_{\circ},\phi-\phi_{\circ}]_{-\frac12},{F}_{2}(\ve,\kappa,u\oplus\phi))\,.
\ee
Obviously, 
$$
\Phi_{2}(0,\pm\kappa_{\circ},0\oplus\phi_{\circ})=(0,0)\,.
$$
Let us denote by 
$$
D_{\pm}\Phi_{2}: \CO\times  \widehat X
\to \CO\times \widehat X
$$
the Frechet  differential at $(0,\pm\kappa_{\circ},0\oplus\phi_{\circ})$ of the map $(\kappa,u\oplus\phi)\mapsto\Phi_{2}(0,\kappa,u\oplus\phi)$, i.e., 
\begin{align*}
D_{\pm}\Phi_{2}(\kappa,u\oplus\phi)=\bigg([\phi_{\circ},\phi]_{-\frac12},u\oplus&\bigg(
  \pm 2\kappa_{\circ}\kappa K_{(2)}^{*}P\phi_{\circ}+\gamma^{\-}_{1}N_{0}u\\
&+\left(\frac12+\gamma_{1}\SL_{0}
\right)_{\!\!\perp}\phi+
\big(\kappa_{\circ}^{2}K_{(2)}^{*}-2\rho\,\gamma_{1}\SL_{0}\big)P\phi\bigg)\bigg)\,.
\end{align*}
Now, for any given $( z_{*},u_{*}\oplus\phi_{*})\in\CO\times\widehat X$,   we prove 
the existence of a unique solution $(\kappa, u\oplus\phi)\in\CO\times\widehat X$ to the equation 
\be\label{uni2}
D_{\pm}\Phi_{2}(\kappa,u\oplus\phi)=( z_{*},u_{*}\oplus\phi_{*})\,.
\ee 
By $P\big(\kappa_{\circ}^{2}K_{(2)}^{*}-2\rho\,\gamma_{1}\SL_{0}\big)P=0$, \eqref{uni2} is equivalent to the system
$$
\begin{cases}
[\phi_{\circ},\phi]_{-\frac12}=z_{*}\\
u=u_{*}
\\
P\big(\pm 
2\kappa_{\circ}\kappa K_{(2)}^{*}P\phi_{\circ}+\gamma^{\-}_{1}N_{0}u \big)=P\phi_{*}
\\
P_{\perp}\big(\pm 
2\kappa_{\circ}\kappa K_{(2)}^{*}P\phi_{\circ}+\gamma^{\-}_{1}N_{0}u
+\left(\frac12+\gamma_{1}\SL_{0}
\right)P_{\perp}\phi+
\kappa_{\circ}^{2}K_{(2)}^{*}P\phi\big)=P_{\perp}\phi_{*}\,.
\end{cases}
$$
The latter re-write as
$$
\begin{cases}
[\phi_{\circ},\phi]_{-\frac12}=z_{*}\\
u =u_{*}
\\
\mp 2\omega_{M}^{-2}\kappa_{\circ}\kappa \,P\phi_{\circ}
=P\big(\phi_{*}-\gamma^{\-}_{1}N_{0}u_{*} \big)
\\
P\phi=c_{\phi}\,{S_{0}}^{-1}1
\\
\left(\frac12+\gamma_{1}\SL_{0}
\right)_{\!\!\perp}\phi=P_{\perp}\big(\phi_{*}-\gamma^{\-}_{1}N_{0}u_{*}-(c_{\phi}\kappa_{\circ}^{2}\pm 2\kappa_{\circ}\kappa )K_{(2)}^{*}{S_{0}}^{-1}1\big)\end{cases}
$$
and so the unique solution of \eqref{uni2} is given by
\be\label{sol2}
\begin{cases}
u=u_{*}
\\
\kappa=\mp\,\omega_{M}^{2}(2\kappa_{\circ})^{-1}\,[\phi_{\circ}, P\phi_{\circ}]_{-\frac12}^{-1}\, [\phi_{\circ}, P\big(\phi_{*}-\gamma^{\-}_{1}N_{0}u_{*} \big)]_{-\frac12}
\\
\,\,\,\,\, =\mp\,({2\sqrt\rho\, \omega_{M}|\Omega|}\,)^{-1}\,\langle 1,\phi_{*}-\gamma^{\-}_{1}N_{0}u_{*}\rangle_{\frac12,-\frac12}\\
\phi=c_{\phi}\,({S_{0}}^{-1}1-\kappa_{\circ}^{2}\varphi_{\circ}^{\perp})+\varphi_{\perp}=
c_{\phi}\phi_{\circ}+\varphi_{\perp}\,,\quad c_{\phi}=c_{\Omega}^{-1}\big(z_{*}-[\phi_{\circ},\varphi_{\perp}]_{-\frac12}\big)
\\
\varphi_{\perp}=\left(\frac12+\gamma_{1}\SL_{0}
\right)_{\!\!\perp}^{-1}
P_{\perp}\big(\phi_{*}-\gamma^{\-}_{1}N_{0}u_{*}\mp 2\kappa_{\circ}\kappa K_{(2)}^{*}{S_{0}}^{-1}1\big)\,.
\end{cases}
\ee
Therefore, by the implicit function argument recalled at the beginning of the Section, in a sufficiently small neighborhood of $0\in\RE$, there is a $\CO\times (\widehat X\backslash\{0\})$-valued, analytic  function $\ve\mapsto (\kappa_{\pm}(\ve), u _{\pm}(\ve)\oplus\phi_{\pm} (\ve))$,
$$
\kappa_{\pm}(\ve)=\pm\sqrt\rho\,\omega_{M}+\sum_{n=1}^{+\infty}\kappa_{\pm}^{(n)}\ve^{n}
\,,\qquad
u_{\pm} (\ve)=\sum_{n=1}^{+\infty} u_{\pm} ^{(n)}\ve^{n}\,,\qquad
\phi_{\pm} (\ve)=\phi_{\circ}+\sum_{n=1}^{+\infty}\phi_{\pm} ^{(n)}\ve^{n}
$$
which uniquely solves \eqref{Fe2}. One can explicitly compute all the coefficients appearing in the expansions. In particular,
 $$
\big(\kappa_{\pm}^{(1)},u_{\pm} ^{(1)}\oplus\phi_{\pm} ^{(1)}\big)=-\left(D_{\pm}\Phi_{2}\right)^{-1}
\left(
\left.\frac{d\ }{d\ve}\right|_{\ve=0}\Phi_{2}(\ve,\pm\sqrt\rho\,\omega_{M},0\oplus\phi_{\circ})
\right).
$$
equivalently,
$$
D_{\pm}\Phi_{2}\big(\kappa_{\pm}^{(1)},u_{\pm} ^{(1)}\oplus\phi_{\pm} ^{(1)}\big)=\left(0,-\kappa_{\circ}^{2}\left(\left(
v_{1}^{2}1_{\Omega}\SL_{0}P\phi_{\circ}\right)\oplus\left(
\mp\kappa_{\circ}K_{(3)}^{*}P\phi_{\circ}\right)\right)\right).
$$
Hence, by \eqref{sol2} and \eqref{k2k3},
\begin{align*}
\kappa_{\pm}^{(1)}=&\pm\,\frac{\kappa_{\circ}}{2|\Omega|}\, 
[\phi_{\circ},P( \mp\kappa_{\circ}K_{(3)}^{*}P\phi_{\circ}
- v_{1}^{2}\gamma^{\-}_{1}N_{0}1_{\Omega}\SL_{0}P\phi_{\circ})
]_{-\frac12}\\
=&\pm\,\frac{\sqrt\rho\,\omega_{M}}{2|\Omega|}\, 
[\phi_{\circ},\mp\sqrt\rho\,\omega_{M}\,i\,\frac{|\Omega|}{4\pi}\,P\phi_{\circ}
- v_{1}^{2}P\gamma^{\-}_{1}N_{0}1_{\Omega}\SL_{0}P\phi_{\circ}]_{-\frac12}
\\
=&-i\,\frac{c_{\Omega}}{8\pi} \,\rho\,\omega_{M}^{2}\mp \,\frac{\sqrt\rho\,\omega_{M}}{2|\Omega|}\,v_{1}^{2}
{\langle 1, 
\gamma^{\-}_{1}N_{0}1_{\Omega}\SL_{0}{S_{0}}^{-1}1\rangle
_{\frac12,-\frac12}}\,.
\end{align*}
Since $u_{\Omega}:=1_{\Omega}\SL_{0}{S_{0}}^{-1}1$ solves 
$$
\begin{cases}\Delta_{\Omega}u_{\Omega}=0\\
\gamma^{\-}_{0}u_{\Omega}=1\,,\end{cases}
$$
one gets $u_{\Omega}=1$ and so, by Green's formula,
\begin{align*}
&\langle 1, 
\gamma^{\-}_{1}N_{0}1_{\Omega}\SL_{0}{S_{0}}^{-1}1\rangle
_{\frac12,-\frac12}=\langle 1,\gamma^{\-}_{1}N_{0}1\rangle
_{\frac12,-\frac12}=
\langle 1, 
\Delta_{\Omega}^{\max} N_{0}1\rangle
_{L^{2}(\Omega)}
=-\langle 1, 
1\rangle
_{L^{2}(\Omega)}=-|\Omega|\,,
\end{align*}
In conclusion, by $c_{\Omega}=\omega_{M}^{2}|\Omega|$, one gets
$$
\kappa_{\pm}(\ve)=\pm\left(1+\frac{v_{1}^{2}}{2}\,\ve\right)\sqrt\rho\,\omega_{M}-i\,\frac{|\Omega|}{8\pi} \,\rho\,\omega_{M}^{4}\ve+O(\ve^{2})
$$
and so $k^{2}_{\pm}(\ve)\in\mathsf e(A(\ve))$, where
$$
\kappa^{2}_{\pm}(\ve)=\left(1+v_{1}^{2}\ve\right)\rho\,\omega^{2}_{M}\mp i\,\frac{|\Omega|}{4\pi} \,\rho^{3/2}\omega_{M}^{5}\ve+O(\ve^{2})\,.
$$
In the Case 3 and Case 4 below we use some notations which are the same as for Case 2. However, be aware that such notations could denote vectors and constants which do not necessarily coincide between the different cases; if so, the definitions are given explicitly.\par
\subsection{Case 3. } 
Let us define the map
$$
{F}_{3}:U\times\CO\times  \widehat X
\to \widehat X
$$
\begin{align*}
&{F}_{3}(\ve,\kappa, u\oplus\phi):=M^{(1,1,1)}_{\kappa}(\ve)
\begin{bmatrix}u\\P\phi+\ve P_{\perp}\phi\end{bmatrix}
\\
=&\begin{bmatrix}v^{2}(\ve)\ve^{-1}+(v^{2}(\ve)-1)\ve\kappa^{2}N_{\ve\kappa}&({v^{2}(\ve)}-1)\kappa^{2}1_{\Omega}\SL_{\ve\kappa})
\\
\frac{1-\rho(\ve)}{1+\rho(\ve)}\,\gamma^{\-}_{1}N_{\ve\kappa}&\ve^{-1}\left(\frac12+\frac{1-\rho(\ve)}{1+\rho(\ve)}\,
\gamma_{1}\SL_{\ve\kappa}\right)\end{bmatrix}
\begin{bmatrix}u\\P\phi+\ve P_{\perp}\phi\end{bmatrix}
,
\end{align*}
where  $U\subset\CO$ is a small open neighborhood of the origin and
$\ve\mapsto v^{2}(\ve)$ and $\ve\mapsto \rho(\ve)$ are analytic function as in \eqref{caso3}.
\par  
We look for a solution $\ve\mapsto (\kappa(\ve), u(\ve)\oplus\phi(\ve))\in\CO_{-}\times(\widehat X\backslash\{0\})$ of the equation
\be\label{Fe3} 
{F}_{3}(\ve,\kappa(\ve), u(\ve)\oplus\phi(\ve))=0\,.
\ee
By the same kind of arguments as in Case 2 and by 
$$
\frac{1-\rho(\ve)}{1+\rho(\ve)}=1-2\rho\,\ve+(2\rho^{2}-\rho_{1})\ve^{2}+O(\ve^{3})
$$ 
one obtains
\begin{align*}
&\ve^{-1}\left(\frac12+\frac{1-\rho(\ve)}{1+\rho(\ve)}\ \gamma_{1}\SL_{\ve\kappa}\right)
\\=&
\ve^{-1}\left(\frac12+\gamma_{1}\SL_{0}
\right)_{\!\!\perp}-2\rho\, \gamma_{1}\SL_{0} +\big((2\rho^{2}-\rho_{1})\gamma_{1}\SL_{0}+
\kappa^{2}K_{(2)}^{*}\big)\ve+O(\ve^{2}).
\end{align*}
This gives, by $\gamma_{1}\SL_{0}P\phi=-\frac12\,P\phi$ and by $\gamma_{1}^{\-}N_{(1)}=0$,
\begin{align*}
&{F}_{3}(\ve,\kappa, u\oplus\phi)
=
\begin{bmatrix}v^{2}&-\kappa^{2}1_{\Omega}\SL_{0}P\\\gamma_{1}^{\-}N_{0}&\left(\frac12+\gamma_{1}\SL_{0}
\right)_{\!\!\perp}+\rho\,P\end{bmatrix}\begin{bmatrix} u\\ \phi\end{bmatrix}\\
+&\begin{bmatrix}v_{1}^{2}-\kappa^{2}N_{0}&\kappa^{2}1_{\Omega}\big((v^{2}\SL_{0}-\kappa\SL_{(1)})P-\SL_{0}P_{\perp}\big)\\
-2\rho \gamma_{1}^{\-}N_{0}&\kappa^{2}K_{(2)}^{*}P-2\rho\,\gamma_{1}\SL_{0}P_{\perp}+
(\frac12\,\rho_{1}-\rho^{2})P\end{bmatrix}\begin{bmatrix} u\\ \phi\end{bmatrix}\ve+O(\ve^{2})\,.
\end{align*}
Therefore, ${F}_{3}(0,\kappa, u\oplus\phi)=0\oplus 0$ if and only if $(\kappa, u,\phi)$ solves 
\be\label{eq3}
\begin{cases}
v^{2}u-\kappa^{2}1_{\Omega}\SL_{0}P\phi=0\\
\gamma_{1}^{\-}N_{0}u+\left(\frac12+\gamma_{1}\SL_{0}
\right)_{\!\!\perp}\phi+\rho\,P\phi
=0\,.\end{cases}
\ee
Taking $\phi$ such that $P\phi= S_{0}^{-1}1$, \eqref{eq3} rewrites as
 $$
\begin{cases}
u= v^{-2}\kappa^{2}\,1\\
v^{-2}\kappa^{2}P\gamma_{1}^{\-}N_{0}1+\rho\,S_{0}^{-1}1=0\\
\left(\frac12+\gamma_{1}\SL_{0}
\right)_{\!\!\perp}\phi=- v^{-2}\kappa^{2}P_{\perp}\gamma_{1}^{\-}N_{0}1\,.\end{cases}
$$
Since
$$
PS_{0}^{-1}1=S_{0}^{-1}1\,,\qquad P\gamma_{1}^{\-}N_{0}1=c_{\Omega}^{-1}\langle 1,\gamma_{1}^{\-}N_{0}1\rangle_{\frac12,-\frac12}S_{0}^{-1}1=-c_{\Omega}^{-1}|\Omega|S_{0}^{-1}1=-\omega_{M}^{-2}S_{0}^{-1}1\,,
$$
one has that ${F}_{3}(0,\pm\kappa_{\circ},u_{\circ},\phi_{\circ})=0$ whenever
$$
\kappa_{\circ}=v\sqrt\rho\,\omega_{M}\,,\qquad u_{\circ}=\rho\,\omega_{M}^{2}1\,,\qquad \phi_{\circ}= S_{0}^{-1}1-\rho\,\omega_{M}^{2}\varphi_{\circ}^{\perp}\,,
$$
$$
\varphi_{\circ}^{\perp}=\left(\frac12+\gamma_{1}\SL_{0}
\right)_{\!\!\perp}^{-1}P_{\perp}\gamma_{1}^{\-}N_{0}1\,.
$$
Now we introduce the map
$$
\Phi_{3}:U\times\CO\times \widehat X
\to \CO\times \widehat X\,,
$$
\be\label{Phi3}
\Phi_{3}(\ve,\kappa,u\oplus\phi)
:=\big(\langle u_{\circ},u-u_{\circ}\rangle_{L^{2}(\Omega)}+[\phi_{\circ},\phi-\phi_{\circ}]_{-\frac12},{F}_{3}(\ve,\kappa,u\oplus\phi)\big)\,.
\ee
Obviously, 
$$
\Phi_{3}(0,\pm\kappa_{\circ},u_{\circ}\oplus\phi_{\circ})=(0,0)\,.
$$
Let us denote by 
$$
D_{\pm}\Phi_{3}: \CO\times  \widehat X
\to \CO\times \widehat X
$$
the Frechet  differential at $(\pm\kappa_{\circ},u_{\circ}\oplus\phi_{\circ})$ of the map $(\kappa,u\oplus\phi)\mapsto\Phi_{3}(0,\kappa,u\oplus\phi)$, i.e., 
\begin{align*}
D_{\pm}\Phi_{3}(\kappa,u\oplus\phi)=\bigg(\langle u_{\circ},u\rangle_{L^{2}(\Omega)}+[\phi_{\circ},\phi]_{-\frac12},&(\mp 2\kappa_{\circ}\kappa 1_{\Omega}\SL_{0}P\phi_{\circ}+v^{2}u-\kappa_{\circ}^{2}1_{\Omega}\SL_{0}P\phi)
\oplus\\
&\left(\gamma_{1}^{\-}N_{0}u+\left(\frac12+\gamma_{1}\SL_{0}
\right)_{\!\!\perp}\phi+\rho\,P\phi\right)\bigg)
\end{align*}
Now, for any given $( z_{*},u_{*}\oplus\phi_{*})\in\CO\times\widehat X$,   we prove 
the existence of a unique solution $(\kappa, u\oplus\phi)\in\CO\times\widehat X$ to the equation 
\be\label{uni3}
D_{\pm}\Phi_{3}(\kappa,u\oplus\phi)=( z_{*},u_{*}\oplus\phi_{*})\,.
\ee 
equivalently, to the system of equations
\be\label{sys}
\begin{cases}
\langle u_{\circ},u\rangle_{L^{2}(\Omega)}+[\phi_{\circ},\phi]_{-\frac12}=z_{*}
\\
v^{2}u \mp 2\kappa_{\circ}\kappa1-c_{\phi}\kappa_{\circ}^{2}1=u_{*}
\\
P\gamma_{1}^{\-}N_{0}u +c_{\phi}\rho\,{S_{0}}^{-1}1=P\phi_{*}
\\
P\phi =c_{\phi}\,{S_{0}}^{-1}1
\\
P_{\perp}\left(\gamma_{1}^{\-}N_{0}u +\left(\frac12+\gamma_{1}\SL_{0}
\right)P_{\perp}\phi \right)=P_{\perp}\phi_{*}
\,.
\end{cases}
\ee
Then, by
$$
[\phi_{\circ},S_{0}^{-1}1]_{-\frac12}=[S_{0}^{-1}1,S_{0}^{-1}1]_{-\frac12}=c_{\Omega}\,,\quad[\phi_{\circ},P\gamma_{1}^{\-}N_{0}1]_{-\frac12}=[S_{0}^{-1}1,\gamma_{1}^{\-}N_{0}1]_{-\frac12}=-|\Omega|\,,
$$
\eqref{sys} gives
\be\label{sol3.1}
\begin{cases}
\kappa=\pm(2\kappa_{\circ})^{-1}(\widehat\kappa-c_{\phi}\kappa_{\circ}^{2})
\\
u =v^{-2}\big(u_{*}+\widehat\kappa 1)
\\
\phi =c_{\phi}S_{0}^{-1}1-v^{-2}\widehat\kappa\varphi_{\circ}^{\perp}+\varphi_{*}^{\perp}\,,
\end{cases}
\ee
where
$$
\varphi_{*}^{\perp}=\left(\frac12+\gamma_{1}\SL_{0}
\right)_{\!\!\perp}^{-1}P_{\perp}\big(\phi_{*}-v^{-2}\gamma^{\-}_{1}N_{0}u_{*}\big)
$$
and $(\widehat\kappa,c_{\phi})$ is the solution of  
\be\label{sol3.2}
\begin{cases}
-v^{-2}|\Omega|\,\widehat\kappa+ c_{\Omega}\rho\,c_{\phi}= [\phi_{\circ},P\big(\phi_{*}-v^{-2}\gamma^{\-}_{1}N_{0}u_{*}\big)]_{-\frac12}
\\
v^{-2}\big(\langle u_{\circ},1\rangle_{L^{2}(\Omega)}
-[\phi_{\circ},\varphi_{\circ}^{\perp}]_{-\frac12}\big)\widehat\kappa+c_{\Omega}c_{\phi}=z_{*}-v^{-2}\langle u_{\circ},u_{*}\rangle_{L^{2}(\Omega)}-
[\phi_{\circ},\varphi_{*}^{\perp}]_{-\frac12}
\,.
\end{cases}
\ee
Since
\begin{align*}
&\det\begin{bmatrix}
-v^{-2}|\Omega|&c_{\Omega}\rho\\
v^{-2}\big(\langle u_{\circ},1\rangle_{L^{2}(\Omega)}
-[\phi_{\circ},\varphi_{\circ}^{\perp}]_{-\frac12}\big)&c_{\Omega}
\end{bmatrix}\\
=&-v^{-2}c_{\Omega}\big(|\Omega|+
\rho\,\langle u_{\circ},1\rangle_{L^{2}(\Omega)}
-\rho\,[\phi_{\circ},\varphi_{\circ}^{\perp}]_{-\frac12}\big)\\
=&-v^{-2}c_{\Omega}\big(|\Omega|+
\rho^{2}\omega_{M}^{2}\big(|\Omega|
+\|\varphi_{\circ}^{\perp}\|^{2}_{\widehat H^{-1/2}(\Gamma)}\big)\big)\not=0\,,
\end{align*}
the system \eqref{sol3.2} has a unique solution. \par
Therefore, by the implicit function argument recalled at the beginning of the Section, the implicit function theorem for analytic, in a sufficiently small neighborhood of $0\in\RE$, there is a $\CO\times (\widehat X\backslash\{0\})$-valued, analytic  function $\ve\mapsto (\kappa_{\pm}(\ve), u_{\pm} (\ve)\oplus\phi_{\pm} (\ve))$,
$$
\kappa_{\pm}(\ve)=\pm v\sqrt\rho\,\omega_{M}+\sum_{n=1}^{+\infty}\kappa_{\pm}^{(n)}\ve^{n}
\,,\qquad
u_{\pm} (\ve)=u_{\circ}+\sum_{n=1}^{+\infty} u_{\pm} ^{(n)}\ve^{n}\,,\qquad
\phi_{\pm} (\ve)=\phi_{\circ}+\sum_{n=1}^{+\infty}\phi_{\pm} ^{(n)}\ve^{n}
$$
which uniquely solves \eqref{Fe3}. One can explicitly compute all the coefficients appearing in the expansions. In particular,
 $$
\big(\kappa_{\pm}^{(1)},u_{\pm} ^{(1)}\oplus\phi_{\pm} ^{(1)}\big)=
-\left(D_{\pm}\Phi_{3}\right)^{-1}
\left(
\left.\frac{d\ }{d\ve}\right|_{\ve=0}\Phi_{3}(\ve,\pm v\sqrt\rho\,\omega_{M},u_{\circ}\oplus\phi_{\circ})
\right).
$$
equivalently,
$$
D_{\pm}\Phi_{3}(\kappa_{\pm}^{(1)},u ^{(1)}\oplus\phi ^{(1)})=\big(0,
\widehat u_{\circ}\oplus\widehat\phi_{\circ}\big)\,,
$$
where
$$
\begin{bmatrix}\widehat u_{\circ}\\\widehat\phi_{\circ}\,\end{bmatrix}:=
-
\begin{bmatrix}v_{1}^{2}-\kappa_{\circ}^{2}N_{0}&\kappa_{\circ}^{2}1_{\Omega}\big((v^{2}\SL_{0}-\kappa_{\circ}\SL_{(1)})P-\SL_{0}P_{\perp}\big)\\
-2\rho \gamma_{1}^{\-}N_{0}&\kappa_{\circ}^{2}K_{(2)}^{*}P-2\rho\,\gamma_{1}\SL_{0}P_{\perp}+
(\frac12\,\rho_{1}-\rho^{2})P\end{bmatrix}
\begin{bmatrix} u_{\circ}\\ \phi_{\circ}\end{bmatrix}.
$$
Hence, $\kappa_{\pm}^{(1)}$ is like $\kappa$ in \eqref{sol3.1}, replacing the couple  
$u_{*}$ and $\phi_{*}$ with  $\widehat u_{\circ}$ and $\widehat\phi_{\circ}$ and taking $z_{*}=0$ in \eqref{sol3.2}. Writing for brevity $\kappa_{\pm}^{(1)}=r_{\pm}-iI_{\pm}$, with both $r_{\pm}$ and $I_{\pm}$ real, one has
$$
\kappa_{\pm}(\ve)=\pm v\sqrt\rho\,\omega_{M}+r_{\pm}\ve -iI_{\pm}\ve+O(\ve^{2})
$$
and then
$$
\kappa^{2}_{\pm}(\ve)=v^{2}\rho\,\omega_{M}^{2}\pm 2v\sqrt\rho\,\omega_{M}r_{\pm}\ve\mp i  2v\sqrt\rho\,\omega_{M}I_{\pm}\ve+O(\ve^{2})
$$
belongs to $\mathsf e(A(\ve))$.
\subsection{Case 4.}\label{sub4}
Let us define the map
$$
{F}_{4}:U\times\CO\times   \widehat X
\to \widehat X
$$
\begin{align*}
&{F}_{4}(\ve,\kappa, u\oplus\phi):=M^{(2,1,1)}_{\kappa}(\ve)
\begin{bmatrix}u\\\phi\end{bmatrix}\\
=&\begin{bmatrix}v^{2}(\ve)\ve^{-2}+(v^{2}(\ve)-1)\kappa^{2}N_{\ve\kappa}&({v^{2}(\ve)}-1)\kappa^{2}1_{\Omega}\SL_{\ve\kappa}
\\
\frac{1-\rho(\ve)}{1+\rho(\ve)}\,\gamma^{\-}_{1}N_{\ve\kappa}&\frac12+\frac{1-\rho(\ve)}{1+\rho(\ve)}\,
\gamma_{1}\SL_{\ve\kappa}
\end{bmatrix}\begin{bmatrix}u\\\phi\end{bmatrix}
,
\end{align*}
where  $U\subset\CO$ is a small open neighborhood of the origin and $\ve\mapsto v^{2}(\ve)$ and $\ve\mapsto \rho(\ve)$ are analytic function as in \eqref{caso4}.  
One has  
$$
\frac{1-\rho(\ve)}{1+\rho(\ve)}=1-2\rho\,\ve+(2\rho^{2}-\rho_{1})\ve^{2}+O(\ve^{3})
$$ 
and\begin{align*}
&\frac{1-\rho(\ve)}{1+\rho(\ve)}\, \gamma_{1}\SL_{\ve\kappa}
=
\gamma_{1}\SL_{0}
-2\rho\, \gamma_{1}\SL_{0}\ve +\big((2\rho^{2}-\rho_{1})\gamma_{1}\SL_{0}+
\kappa^{2}K_{(2)}^{*}\big)\ve^{2}+O(\ve^{3})\,.
\end{align*}
Therefore (notice that $\gamma_{1}^{\-}N_{(1)}=0$), 
\begin{align*}
&{F}_{4}(\ve,\kappa, u\oplus\phi)=
\begin{bmatrix}v^{2}-\kappa^{2}N_{0}&-\kappa^{2}1_{\Omega}\SL_{0}
\\
\gamma^{\-}_{1}N_{0}&\frac12+
\gamma_{1}\SL_{0}
\end{bmatrix}\begin{bmatrix}u\\\phi\end{bmatrix}
+
\begin{bmatrix}v_{1}^{2}-\kappa^{3}N_{(1)}&-\kappa^{3}1_{\Omega}\SL_{(1)}\,
\\
-2\rho\,\gamma^{\-}_{1}N_{0}&-2\rho\, \gamma_{1}\SL_{0}
\end{bmatrix}\begin{bmatrix}u\\\phi\end{bmatrix}\ve\\
+&\begin{bmatrix}v_{2}^{2}+v^{2}\kappa^{2}N_{0}-\kappa^{4}N_{(2)}&v^{2}\kappa^{2}1_{\Omega}\SL_{0}-\kappa^{4}1_{\Omega}\SL_{(2)}\,
\\
((2\rho^{2}-\rho_{1})\gamma_{1}\SL_{0}+
\kappa^{2}K_{(2)}^{*})\gamma^{\-}_{1}N_{0}+\gamma^{\-}_{1}N_{(2)}& 
(2\rho^{2}-\rho_{1})\gamma_{1}\SL_{0}+
\kappa^{2}K_{(2)}^{*}
\end{bmatrix}\begin{bmatrix}u\\\phi\end{bmatrix}\ve^{2}+O(\ve^{3})\,,
\end{align*}
\begin{remark}\label{k=0}
It is immediate to check that $u\oplus\phi\in \widehat X\backslash\{0\}$, solves  ${F}_{4}(0,0, u\oplus\phi)=0$ if and only if $u=0$ and $P_{\perp}\phi=0$, i.e., if and only if 
$$
u\oplus\phi=0\oplus c_{0}S_{0}^{-1 }1\,,\quad c_{0}\in\CO\backslash\{0\}\,.
$$ The case $\kappa\not=0$ is more involved and is described in the following Lemma.
\end{remark}
\begin{lemma} $(\kappa, u\oplus\phi)\in (\CO\backslash\{0\})\times (\widehat X\backslash\{0\})$ solves the equation ${F}_{4}(0,\kappa, u\oplus\phi)=0$ if and only if 
$$
(\kappa,u\oplus\phi)=(\pm v\nu^{1/2},u_{\nu}\oplus\phi_{\nu})\,,
$$
where 
$(\nu,u_{\nu})$ is an eigenpair  of the Neumann Laplacian  $-\Delta_{\Omega}^{N}$ in $L^{2}(\Omega)$ and
\be\label{phinu}
\phi_{\nu}=S_{0}^{-1}\gamma_{0}^{\-}({\nu}^{-1}-N_{0})u_{\nu}
\,.
\ee
\end{lemma}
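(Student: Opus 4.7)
The plan is to convert the system $F_4(0,\kappa,u\oplus\phi)=0$ into a classical Neumann eigenvalue problem on $\Omega$ for the auxiliary function $w := N_0 u + 1_\Omega \SL_0 \phi \in H^1_{\Delta}(\Omega)$. First I write out the two rows of the equation:
\begin{align*}
v^2 u - \kappa^2\big(N_0 u + 1_\Omega \SL_0 \phi\big) &= 0,\\
\gamma_1^{\-} N_0 u + \big(\tfrac{1}{2}+\gamma_1\SL_0\big)\phi &= 0.
\end{align*}
Using the standard interior jump identity $\gamma_1^{\-}\SL_0 = \tfrac{1}{2}+\gamma_1\SL_0$ on $H^{-1/2}(\Gamma)$, the second row rewrites as $\gamma_1^{\-} w = 0$, while the first gives $v^2 u = \kappa^2 w$. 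Since $R_0 1_\Omega^* u \in H^2(\RE^3)$ satisfies $-\Delta(R_0 1_\Omega^* u) = 1_\Omega^* u$ and $\SL_0 \phi$ is harmonic in $\Omega$, one also has $-\Delta_\Omega^{\max} w = u$ in $\Omega$.

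Inserting $u = (\kappa^2/v^2)\,w$ into $-\Delta_\Omega^{\max} w = u$ yields $-\Delta w = (\kappa^2/v^2)\,w$ in $\Omega$ together with the Neumann condition $\gamma_1^{\-} w = 0$. If $w$ vanished, so would $u$, forcing $1_\Omega \SL_0 \phi = 0$; taking the interior Dirichlet trace and using invertibility of $S_0\colon H^{-1/2}(\Gamma)\to H^{1/2}(\Gamma)$ (ensured by the $\C^{1,\alpha}$ hypothesis) would give $\phi=0$, contradicting $u\oplus\phi\neq 0$. Hence $w\neq 0$, so $\nu := \kappa^2/v^2$ is a positive Neumann eigenvalue of $-\Delta_\Omega$, the function $w$ is a corresponding eigenfunction, and $\kappa = \pm v\nu^{1/2}$. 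Since $u = \nu w$, the vector $u$ is itself a Neumann eigenfunction with eigenvalue $\nu$; call it $u_\nu$. Then $w = \nu^{-1} u_\nu$, and taking $\gamma_0^{\-}$ of the identity $1_\Omega \SL_0 \phi = w - N_0 u_\nu = (\nu^{-1}-N_0) u_\nu$ and inverting $S_0$ produces $\phi = S_0^{-1}\gamma_0^{\-}(\nu^{-1}-N_0)u_\nu = \phi_\nu$.

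For the converse, I substitute $(\kappa, u_\nu, \phi_\nu)$ back: since $\gamma_0^{\-}(1_\Omega \SL_0 \phi_\nu) = S_0\phi_\nu = \gamma_0^{\-}(\nu^{-1}-N_0)u_\nu$ and both $1_\Omega\SL_0\phi_\nu$ and $(\nu^{-1}-N_0)u_\nu$ are harmonic in $\Omega$ (the latter by $-\Delta_\Omega^{\max}(\nu^{-1}-N_0)u_\nu = \nu^{-1}\nu u_\nu - u_\nu = 0$), their difference vanishes, so $1_\Omega\SL_0 \phi_\nu = (\nu^{-1}-N_0)u_\nu$ and $w = \nu^{-1} u_\nu$. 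The two rows of $F_4(0,\pm v\nu^{1/2},u_\nu\oplus\phi_\nu)=0$ then follow immediately from $v^2 u_\nu = \kappa^2 \nu^{-1} u_\nu$ and $\gamma_1^{\-}w = \nu^{-1}\gamma_1^{\-}u_\nu = 0$. The only nontrivial technical step is the rewriting of the second row as $\gamma_1^{\-}w = 0$, which rests on the single-layer jump relation $\gamma_1^{\-}\SL_0 = \tfrac{1}{2}+\gamma_1\SL_0$; everything else is a direct unravelling of the structure supplied by $N_0$ and $\SL_0$.
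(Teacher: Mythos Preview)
Your proof is correct and actually cleaner than the paper's. The paper works directly with $u$: it takes $\gamma_1^{\-}$ of the first row and combines it with the second row (using $[\gamma_1]\SL_0\phi=-\phi$) to obtain $\gamma_1^{\-}u=0$, then applies $-\Delta_\Omega^{\max}$ to the first row to get the Neumann eigenvalue equation for $u$. To recover $\phi$, the paper goes through an orthogonal decomposition $\phi=c_\nu S_0^{-1}1+\phi_\nu^\perp$ with respect to $\ker(\tfrac12+\gamma_1\SL_0)$, identifies $\phi_\nu^\perp$ via $(\tfrac12+\gamma_1\SL_0)_\perp^{-1}$, and then spends a page showing that the harmonic function $h_\nu:=(\nu^{-1}-N_0)u_\nu-1_\Omega\SL_0\phi_\nu^\perp$ is constant in order to pin down $c_\nu$ and reassemble $\phi_\nu=S_0^{-1}\gamma_0^{\-}(\nu^{-1}-N_0)u_\nu$.

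Your choice of the auxiliary variable $w=N_0u+1_\Omega\SL_0\phi$ short-circuits all of this: the second row is \emph{exactly} $\gamma_1^{\-}w=0$ once you use $\gamma_1^{\-}\SL_0=\tfrac12+\gamma_1\SL_0$, and the first row is $v^2u=\kappa^2 w$; together with $-\Delta_\Omega^{\max}w=u$ this gives the Neumann eigenvalue problem for $w$ at once, and $\phi$ falls out from $1_\Omega\SL_0\phi=w-N_0u$ by a single application of $\gamma_0^{\-}$ and $S_0^{-1}$, with no projection decomposition needed. One minor remark: the invertibility of $S_0\colon H^{-1/2}(\Gamma)\to H^{1/2}(\Gamma)$ already holds for Lipschitz $\Gamma$ and does not require the $\C^{1,\alpha}$ hypothesis (that hypothesis is used elsewhere in the paper for compactness of $K_0^*$).
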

\begin{proof} Obviously, ${F}_{4}(0,\kappa, u\oplus\phi)=0$ if and only if  
\be\label{s4}
\begin{cases}
\left(\frac{v^{2}}{\kappa^{2}}-N_{0}\right)u-1_{\Omega}\SL_{0}\phi=0\\
\gamma^{\-}_{1}N_{0}u+\left(\frac12+
\gamma_{1}\SL_{0}\right)\phi=0\,.
\end{cases}
\ee
Therefore, if $(\kappa, u\oplus\phi)\not=(0,0\oplus0)$ is a solution, then
$$\begin{cases}
\frac{v^{2}}{\kappa^{2}}\gamma^{\-}_{1}u-\gamma^{\-}_{1}N_{0}u-\gamma^{\-}_{1}\SL_{0}\phi=0\\
\gamma^{\-}_{1}N_{0}u+\frac12\,\phi+
\frac12\,(\gamma_{1}^{\+}+\gamma_{1}^{\-})\SL_{0}\phi=0\,.
\end{cases}
$$
Adding the two lines, one obtains
$$
\frac{v^{2}}{\kappa^{2}}\,\gamma^{\-}_{1}u+\frac12\,\phi+
\frac12\,[\gamma_{1}]\SL_{0}\phi=0\,.
$$
Then, by $[\gamma_{1}]\SL_{0}\phi=-\phi$, 
$$
\gamma^{\-}_{1}u=0\,.
$$
Furthermore, by 
$$
0=-\Delta^{\max}_{\Omega}\left(\left(\frac{v^{2}}{\kappa^{2}}-N_{0}\right)u-1_{\Omega}\SL_{0}\phi\right)=-\frac{v^{2}}{\kappa^{2}}\Delta^{\max}_{\Omega}u-u\,,
$$
one has
$$
\frac{\kappa^{2}}{v^{2}}\in\sigma(-\Delta_{\Omega}^{N})=\sigma_{disc}(-\Delta_{\Omega}^{N})
$$
and so $(v^{-2}\kappa^{2},u)\equiv ({\nu},u_{\nu})$ is an eigenpair  of the Neumann Laplacian $-\Delta_{\Omega}^{N}$.\par
By
$$
P\gamma_{1}^{\-}N_{0}u_{\nu}=c_{\Omega}^{-1}\langle1,\gamma_{1}^{\-}N_{0}u_{\nu}\rangle_{\frac12,-\frac12}S_{0}^{-1}1=-c_{\Omega}^{-1}\langle1,u_{\nu}\rangle_{L^{2}(\Omega)}S_{0}^{-1}1=0\,,
$$
there follows 
$$
P_{\perp}\gamma_{1}^{\-}N_{0}u_{\nu}=\gamma_{1}^{\-}N_{0}u_{\nu}
$$
and so, the second equation in \eqref{s4} gives
\be\label{DN3}
P_{\perp}\phi\equiv \phi_{\nu}^{\perp}=-\left(\frac12+
\gamma_{1}\SL_{0}\right)_{\!\!\perp}^{-1}\!\!\gamma^{\-}_{1}N_{0}u_{\nu}\,.
\ee
By the relation 
\be\label{DN1}
\DN_{0}=\left(\frac12+
\gamma_{1}\SL_{0}\right)S_{0}^{-1}
\ee
and by 
\be\label{DN2}
\DN_{0}\gamma_{0}^{\-}({\nu}^{-1}-N_{0})u_{\nu}=\gamma_{1}^{\-}({\nu}^{-1}-N_{0})u_{\nu}=
-\gamma_{1}^{\-}N_{0}u_{\nu}\,,
\ee
one gets
$$
-\left(\frac12+
\gamma_{1}\SL_{0}\right)_{\!\!\perp}^{-1}\!\!\gamma_{1}^{\-}N_{0}u_{\nu}=P_{\perp}S_{0}^{-1}\gamma_{0}^{\-}({\nu}^{-1}-N_{0})u_{\nu}
$$
and so 
$$
\phi_{\nu}^{\perp}
=P_{\perp}S_{0}^{-1}\gamma_{0}^{\-}({\nu}^{-1}-N_{0})u_{\nu}\,.
$$
Writing $P\phi=c_{\nu}S_{0}^{-1}1$, by the first equation in \eqref{s4} one gets 
$$
h_{\nu}:=\left(\frac1{\nu}-N_{0}\right)u_{\nu}-1_{\Omega}\SL_{0}\phi_{\nu}^{\perp}=c_{\nu}1
$$
and so 
$$
c_{\nu}|\Omega|=\langle 1,h_{\nu}\rangle_{L^{2}(\Omega)}\,.
$$
Therefore we proved that if $(\kappa, u\oplus\phi)\not=(0,0\oplus 0)$ solves ${F}_{4}(0,\kappa, u\oplus\phi)=0\oplus0$, then $(\kappa, u\oplus\phi)=(\pm v{\nu}^{1/2},u_{\nu}\oplus\phi_{\nu})$ where $({\nu},u_{\nu})$ is an eigenpair  of the Neumann Laplacian and 
\be\label{p-nu}
\phi_{\nu}=|\Omega|^{-1}\langle 1,h_{\nu}\rangle_{L^{2}(\Omega)}\,S_{0}^{-1}1+\phi_{\nu}^{\perp}\,.
\ee 
Now, let us show that $h_{\nu}$ is a constant function. Since $h_{\nu}$ is harmonic, $h_{\nu}$  is constant  if and only if $\gamma_{0}^{\-}h_{\nu}$ is. By
\begin{align*}
h_{\nu}=&({\nu}^{-1}-N_{0})u_{\nu}-1_{\Omega}\SL_{0}
P_{\perp}S_{0}^{-1}\gamma_{0}^{\-}({\nu}^{-1}-N_{0})u_{\nu}\\
=&({\nu}^{-1}-N_{0})u_{\nu}-1_{\Omega}\SL_{0}
S_{0}^{-1}\gamma_{0}^{\-}({\nu}^{-1}-N_{0})u_{\nu}\\
&+
1_{\Omega}\SL_{0}
PS_{0}^{-1}\gamma_{0}^{\-}({\nu}^{-1}-N_{0})u_{\nu}\,,
\end{align*}
and by $S_{0}=\gamma_{0}\SL_{0}=\gamma_{0}^{\-}1_{\Omega}\SL_{0}$, there follows
$$
\gamma_{0}^{\-}h_{\nu}=S_{0}
PS_{0}^{-1}\gamma_{0}^{\-}({\nu}^{-1}-N_{0})u_{\nu}=
c_{\Omega}^{-1}\langle 1,S_{0}^{-1}\gamma_{0}^{\-}({\nu}^{-1}-N_{0})u_{\nu}\rangle_{\frac12,-\frac12}1\,.
$$
Therefore, $h_{\nu}$ is a constant and
$$
h_{\nu}=c_{\nu}=c_{\Omega}^{-1}\langle 1,S_{0}^{-1}\gamma_{0}^{\-}({\nu}^{-1}-N_{0})u_{\nu}\rangle_{\frac12,-\frac12}1\,.
$$ 
By \eqref{p-nu}, 
$$
\phi_{\nu}=c_{\nu}S_{0}^{-1}1+\phi_{\nu}^{\perp}=S_{0}^{-1}\gamma_{0}^{\-}({\nu}^{-1}-N_{0})u_{\nu}\,.
$$
Conversely, let us now prove that ${F}_{4}(0,\pm v{\nu}^{1/2},u_{\nu}\oplus\phi_{\nu})=0$, equivalently
$$
\begin{cases}
(\nu^{-1}-N_{0})u_{\nu}-1_{\Omega}\SL_{0}\phi_{\nu}=0\\
\gamma_{1}^{\-}N_{0}u_{\nu}+\left(\frac12+\gamma_{1}\SL_{0}\right)\phi_{\nu}=0\,.
\end{cases}
$$
Since both $1_{\Omega}\SL_{0}\phi_{\nu}$ and $({\nu}^{-1}-N_{0})u_{\nu}$ are harmonic and 
$$\gamma_{0}^{\-}1_{\Omega}\SL_{0}\phi_{\nu}=S_{0}\phi_{\nu}=\gamma_{0}^{\-}({\nu}^{-1}-N_{0})u_{\nu}\,,
$$ 
they coincide. Furthermore, by \eqref{DN1} and \eqref{DN2},
$$
\gamma_{1}^{\-}N_{0}u_{\nu}=-\DN_{0}S_{0}\phi_{\nu}=-\left(\frac12+\gamma_{1}\SL_{0}\right)\phi_{\nu}\,.
$$
\end{proof}
\begin{remark}
By the proof of the previous Lemma, since $h_{\nu}$ is constant and hence $1_{\Omega}\SL_{0}P\phi_{\nu}=h_{\nu}$, one gets
\be\label{un}
N_{0}u_{\nu}+1_{\Omega}\SL_{0}\phi_{\nu}=\nu^{-1} u_{\nu}+h_{\nu}-\big((\nu^{-1} -N_{0})u_{\nu}
-1_{\Omega}\SL_{0}\phi_{\nu}^{\perp}\big)=\nu^{-1} u_{\nu}\,.
\ee
\end{remark}
Now we introduce the map
$$
\Phi_{4}:U\times\CO\times \widehat X
\to \CO\times \widehat X\,,
$$
\be\label{Phi4}
\Phi_{4}(\ve,\kappa,u\oplus\phi)
:=\big(\langle u_{\nu},u-u_{\nu}\rangle_{L^{2}(\Omega)}+[\phi_{\nu},\phi-\phi_{\nu}]_{-\frac12},{F}_{4}(\ve,\kappa,u\oplus\phi)\big)\,.
\ee
Obviously, 
$$
\Phi_{4}(0,\pm v\nu^{1/2},u_{\nu}\oplus\phi_{\nu})=(0,0)\,.
$$
Let us denote by
$$
D_{\pm}\Phi_{4}: \CO\times  \widehat X
\to \CO\times \widehat X
$$
the Frechet  differential at $(0,\pm v\nu^{1/2},u_{\nu}\oplus\phi_{\nu})$ of the map $(\kappa,u\oplus\phi)\mapsto\Phi_{4}(0,\kappa,u\oplus\phi)$, i.e., 
\begin{align*}
D_{\pm}\Phi_{4}(\kappa,u\oplus\phi)=\bigg(&\langle u_{\nu},u\rangle_{L^{2}(\Omega)}+[\phi_{\nu},\phi]_{-\frac12},\big( \mp2v{\nu}^{1/2}\kappa (N_{0}u_{\nu}+ 1_{\Omega}\SL_{0}\phi_{\nu}\big)\\
&+v^{2}(1-{\nu}N_{0})u -v^{2}{\nu}1_{\Omega}\SL_{0}\phi \big)
\oplus\left(\gamma^{\-}_{1}N_{0}u +\left(\frac12+\gamma_{1}\SL_{0}\right)\phi\right)
\bigg)
\end{align*}
Now, we study the invertibility of  $D_{\pm}\Phi_{4}$. Since the case $\nu=0$ is somehow more involved, we begin considering the case $\nu\not=0$.
\begin{lemma} Suppose that the Neumann eigenvalue $\nu\not=0$ is simple. Then,  for any given $( z_{*},u_{*}\oplus\phi_{*})\in\CO\times\widehat X$,   the equation 
\be\label{uni4}
D_{\pm}\Phi_{4}(\kappa,u\oplus\phi)=( z_{*},u_{*}\oplus\phi_{*})
\ee 
has a unique solution  $(\kappa, u\oplus\phi)\in\CO\times\widehat X$ . In particular,
\be\label{kappa}
\kappa
=\mp\,\frac12\,\nu^{1/2}\|u_{\nu}\|^{-2}_{L^{2}(\Omega)}\left(v^{-1}\langle u_{\nu},u_{*}\rangle_{L^{2}(\Omega)}+v\,\langle\gamma_{0}^{\-}u_{\nu},\phi_{*}\rangle_{\frac12,-\frac12}\right)\,
\ee
where $u_{\nu}$ denotes the eigenfunction corresponding to $\nu$.
\end{lemma}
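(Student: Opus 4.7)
The plan is to reduce the system $D_{\pm}\Phi_{4}(\kappa,u\oplus\phi)=(z_{*},u_{*}\oplus\phi_{*})$ to an inhomogeneous Neumann problem on $\Omega$. After using \eqref{un} to simplify the coefficient of $\kappa$, the system reads
\begin{align*}
\langle u_{\nu},u\rangle_{L^{2}(\Omega)}+[\phi_{\nu},\phi]_{-\frac12} &= z_{*},\\
\mp 2v\,\nu^{-1/2}\kappa\,u_{\nu} + v^{2}(1-\nu N_{0})u - v^{2}\nu\,1_{\Omega}\SL_{0}\phi &= u_{*},\\
\gamma_{1}^{\-}N_{0}u + \bigl(\tfrac12+\gamma_{1}\SL_{0}\bigr)\phi &= \phi_{*}.
\end{align*}
The decisive step will be the change of variables $w:=N_{0}u+1_{\Omega}\SL_{0}\phi\in H^{2}(\Omega)$. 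Since $-\Delta N_{0}u=u$ and $1_{\Omega}\SL_{0}\phi$ is $\Omega$-harmonic, $-\Delta w=u$; since $\gamma_{1}^{\-}\SL_{0}=\tfrac12+\gamma_{1}\SL_{0}$, the third equation becomes $\gamma_{1}^{\-}w=\phi_{*}$; and the second equation rearranges to $u=\nu w + v^{-2}u_{*}\pm 2v^{-1}\nu^{-1/2}\kappa\,u_{\nu}$. Combining with $-\Delta w=u$ gives the Neumann-type problem
\[
-(\Delta+\nu)w = v^{-2}u_{*}\pm 2v^{-1}\nu^{-1/2}\kappa\,u_{\nu}\quad\text{in }\Omega,\qquad \gamma_{1}^{\-}w=\phi_{*}.
\]

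Since $\nu$ is a simple Neumann eigenvalue with eigenfunction $u_{\nu}$, Green's identity (using $\gamma_{1}^{\-}u_{\nu}=0$ and $-\Delta u_{\nu}=\nu u_{\nu}$) produces the Fredholm compatibility
\[
\int_{\Omega}u_{\nu}\bigl[-(\Delta+\nu)w\bigr]dx=-\langle\gamma_{0}^{\-}u_{\nu},\gamma_{1}^{\-}w\rangle_{\frac12,-\frac12},
\]
i.e.
\[
v^{-2}\langle u_{\nu},u_{*}\rangle_{L^{2}(\Omega)}\pm 2v^{-1}\nu^{-1/2}\kappa\,\|u_{\nu}\|^{2}_{L^{2}(\Omega)}= -\langle\gamma_{0}^{\-}u_{\nu},\phi_{*}\rangle_{\frac12,-\frac12},
\]
and solving for $\kappa$ delivers \eqref{kappa}.

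For the invertibility of $D_{\pm}\Phi_{4}$, let $DF_{4}:\CO\times\widehat X\to\widehat X$ denote the second-component projection. Applied to the homogeneous equation $DF_{4}(k_{0},u_{0}\oplus\phi_{0})=0$, the same $w$-reduction forces $k_{0}=0$ by compatibility; simplicity of $\nu$ then gives $w_{0}=\beta u_{\nu}$, hence $u_{0}=\nu\beta u_{\nu}$, and the $S_{0}^{-1}1$-component of $\phi_{0}$ is pinned down by the identity $w_{0}=N_{0}u_{0}+1_{\Omega}\SL_{0}\phi_{0}$, forcing $\phi_{0}=\nu\beta\phi_{\nu}$. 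Thus $\ker DF_{4}=\mathrm{span}\{(0,u_{\nu}\oplus\phi_{\nu})\}$. Applied to the inhomogeneous equation, the reduction also shows $DF_{4}$ is surjective: the Neumann-problem compatibility uniquely determines $k_{0}$, and the remaining compatibility for $\phi_{0}$ is automatic (via Green's identity applied to $1$ and $w$). Hence $DF_{4}$ is Fredholm of index $1$; augmenting by the scalar equation $\langle u_{\nu},\cdot\rangle_{L^{2}}+[\phi_{\nu},\cdot]_{-\frac12}$ produces $D_{\pm}\Phi_{4}$ of Fredholm index $0$, and injectivity holds because the kernel direction $(0,u_{\nu}\oplus\phi_{\nu})$ is sent to $\|u_{\nu}\|^{2}_{L^{2}(\Omega)}+[\phi_{\nu},\phi_{\nu}]_{-\frac12}>0$ under that constraint. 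Therefore $D_{\pm}\Phi_{4}$ is bijective.

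The main obstacle is spotting the change of variables $w=N_{0}u+1_{\Omega}\SL_{0}\phi$, which simultaneously decouples the $\kappa$-contribution via \eqref{un} and collapses the three equations into a single Neumann problem with spectral parameter $\nu$. Once this substitution is identified, the formula \eqref{kappa} emerges directly from the Fredholm compatibility, and the one-dimensional structure of $\ker DF_{4}$ reduces to the null-space analysis of the same Neumann operator; the remaining Fredholm-index and injectivity checks are routine.
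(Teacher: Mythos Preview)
Your proof is correct and arrives at the same Neumann boundary value problem $-(\Delta+\nu)w=v^{-2}u_{*}\pm 2v^{-1}\nu^{-1/2}\kappa\,u_{\nu}$, $\gamma_{1}^{\-}w=\phi_{*}$ as the paper, though via a different substitution: you set $w=N_{0}u+1_{\Omega}\SL_{0}\phi$, while the paper uses $w_{*}=\nu^{-1}(u-v^{-2}u_{*})$. These differ by a multiple of $u_{\nu}$, so they yield the same compatibility condition and hence the same formula \eqref{kappa}. One small slip: your $w$ is only in $H^{1}(\Omega)$, not $H^{2}(\Omega)$, since $1_{\Omega}\SL_{0}\phi\in H^{1}$ for $\phi\in H^{-1/2}(\Gamma)$; this is harmless because the Neumann problem is well-posed in $H^{1}$.

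The genuine difference is in how bijectivity of $D_{\pm}\Phi_{4}$ is established. The paper proceeds constructively: it writes $u$ in the form \eqref{sol-u}, determines $P_{\perp}\phi$ from the third line, then solves an explicit $2\times 2$ linear system \eqref{sol4.2} for the remaining scalars $(c_{*},c_{\phi})$, and finally verifies the converse by a direct computation showing a residual term $\widetilde u_{*}$ is harmonic with vanishing Neumann trace and vanishing mean, hence zero. Your argument is structural: you show $DF_{4}$ has one-dimensional kernel spanned by $(0,u_{\nu}\oplus\phi_{\nu})$ and is surjective (both via the Neumann-problem reduction), hence is Fredholm of index $1$; augmenting by the scalar constraint drops the index to $0$, and injectivity follows since the kernel direction pairs nontrivially with $u_{\nu}\oplus\phi_{\nu}$. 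Your route is shorter and more conceptual; the paper's route has the advantage of producing explicit formulas for all solution components, which are needed anyway to compute the first-order correction $\kappa_{\pm}^{(1)}$ later in the section.
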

\begin{proof} The equation \eqref{uni4} is equivalent
to the system of equations
$$
\begin{cases}
\langle u_{\nu},u\rangle_{L^{2}(\Omega)}+[\phi_{\nu},\phi]_{-\frac12}=z_{*}
\\
\mp2v{\nu}^{1/2}\kappa (N_{0}u_{\nu}+ 1_{\Omega}\SL_{0}\phi_{\nu})
+v^{2}(1-{\nu}N_{0})u -v^{2}{\nu}1_{\Omega}\SL_{0}\phi =u_{*}
\\
\gamma^{\-}_{1}N_{0}u +(\frac12+\gamma_{1}\SL_{0})\phi =\phi_{*}
\,.
\end{cases}
$$
By \eqref{un}, the latter rewrites as
\be\label{sys4}
\begin{cases}
\langle u_{\nu},u\rangle_{L^{2}(\Omega)}+[\phi_{\nu},\phi]_{-\frac12}=z_{*}
\\
\mp2v{\nu}^{-1/2}\kappa u_{\nu}
+v^{2}(1-{\nu}N_{0})u -v^{2}{\nu}1_{\Omega}\SL_{0}\phi =u_{*}
\\
\gamma^{\-}_{1}N_{0}u +(\frac12+\gamma_{1}\SL_{0})\phi =\phi_{*}
\,.
\end{cases}
\ee
Let us now suppose that \eqref{sys4} has at least one solution $(\kappa,u,\phi)$. By the relation in the second line, $u_{*}-v^{2}u \in H^{1}_{\Delta}(\Omega)$ and so its Neumann trace is well-defined as a distribution in $H^{-1/2}(\Gamma)$. Hence, by taking the Neumann trace in the second line and by adding the result to the third one, one obtains
$$
\frac12\,\phi +
\frac12\,[\gamma_{1}]\SL_{0}\phi =\nu^{-1}\gamma^{\-}_{1}(v^{-2}u_{*}-u )
+\phi_{*}\,.
$$
Thus, by $[\gamma_{1}]\SL_{0}\phi=-\phi$,
$$
\nu^{-1}\gamma^{\-}_{1}(u -v^{-2}u_{*})=\phi_{*}\,.
$$
Then, by 
\begin{align*}
\Delta_{\Omega}^{\max}\big(\mp2v^{-1}{\nu}^{-3/2}\kappa u_{\nu}
-N_{0}u -1_{\Omega}\SL_{0}\phi 
\big)
=&\mp2v^{-1}{\nu}^{-3/2}\kappa\Delta_{\Omega}^{\max} u_{\nu}
+u \\
=\pm2v^{-1}{\nu}^{-1/2}\kappa u_{\nu}
+u =&
-\Delta_{\Omega}^{\max}(\nu^{-1}(u -v^{-2}u_{*}))\,,
\end{align*}
the function 
$$
w_{*}:=\nu^{-1}(u -v^{-2}u_{*})
$$ 
solves the boundary value problem
\be\label{bvp4}
\begin{cases}
(-\Delta^{\max}_{\Omega}-\nu) w_{*}=f_{*}\\
\gamma^{\-}_{1}w_{*}=\phi_{*}\,,
\end{cases}
\ee
where 
$$
f_{*}=v^{-2}u_{*}\pm2v^{-1}{\nu}^{-1/2}\kappa u_{\nu}\,.
$$
By \cite[Theorem 4.10]{McLe}, \eqref{bvp4} is solvable if and only if 
$$
\langle u_{\nu},f_{*}\rangle_{L^{2}(\Omega)}+\langle\gamma_{0}^{\-}u_{\nu},\phi_{*}\rangle_{\frac12,-\frac12}=0\,.
$$
The latter entails \eqref{kappa}. Furthermore, the set of solutions of \eqref{bvp4} is 
$$
\{w_{*}=c_{*}u_{\nu}+\widetilde w_{*}\,,\quad c_{*}\in \CO\}\,
$$
where $\widetilde w_{*}$ is any fixed particular solution. Hence, $u$ is of the kind
\be\label{sol-u}
u =\nu(c_{*}u_{\nu}+\widetilde w_{*})+v^{-2}u_{*}\,,\quad c_{*}\in\CO\,.
\ee 
If $u$ is as in \eqref{sol-u}, then 
\begin{align*}
P\gamma_{1}^{\-}N_{0}u =&c_{\Omega}^{-1}\langle1,\gamma_{1}^{\-}N_{0}u \rangle_{\frac12,-\frac12}S_{0}^{-1}1=-c_{\Omega}^{-1}\langle1,u \rangle_{L^{2}(\Omega)}S_{0}^{-1}1\\
=&-c_{\Omega}^{-1}\big(\langle 1,\nu\widetilde w_{*}+v^{-2}u_{*} \rangle_{L^{2}(\Omega)} \big)S_{0}^{-1}1\\
=&-c_{\Omega}^{-1}\big(-\langle 1,\Delta^{\max}_{\Omega}\widetilde w_{*}\rangle_{L^{2}(\Omega)}+\langle 1,v^{-2}u_{*} -f_{*}\rangle_{L^{2}(\Omega)} \big)S_{0}^{-1}1\\
=&c_{\Omega}^{-1}\langle 1,\Delta^{\max}_{\Omega}\widetilde w_{*}\rangle_{L^{2}(\Omega)}=
c_{\Omega}^{-1}\langle 1,\phi_{*}\rangle_{\frac12,-\frac12}\\
=&P\phi_{*}\,.
\end{align*}
Notice that the latter also gives $P\gamma_{1}^{\-}N_{0}u_{\nu}=0$, i.e., 
$\gamma_{1}^{\-}N_{0}u_{\nu}=P_{\perp}\gamma_{1}^{\-}N_{0}u_{\nu}$.
Therefore, by the third line in \eqref{sys4}, 
$$
P_{\perp}\phi =
\left(\frac12+\gamma_{1}\SL_{0}
\right)_{\!\!\perp}^{-1}P_{\perp}(\phi_{*}-\gamma^{\-}_{1}N_{0}u)=
\psi_{\nu}^{\perp}-\nu c_{*}\varphi_{\nu}^{\perp}\,,
$$
where 
$$
\varphi_{\nu}^{\perp}=\left(\frac12+\gamma_{1}^{\-}\SL_{)}\right)_{\!\!\perp}^{\!-1}\gamma_{1}^{\-}N_{0}u_{\nu}
$$
and
$$
\psi_{\nu}^{\perp}=\left(\frac12+\gamma_{1}\SL_{0}
\right)_{\!\!\perp}^{-1}P_{\perp}(\phi_{*}-\gamma^{\-}_{1}N_{0}(\nu\widetilde w_{*}+v^{-2}u_{*}))\,.
$$
By \eqref{DN3}, $\varphi_{\nu}^{\perp}=-\phi_{\nu}^{\perp}$ and so
\be\label{sol-phi-perp}
P_{\perp}\phi =\psi_{\nu}^{\perp}+\nu c_{*}\phi_{\nu}^{\perp}\,.
\ee
Writing 
\be\label{c-phi}
P\phi=c_{\phi}S_{0}^{-1}1
\ee 
and considering the scalar product with $1\in L^{2}(\Omega)$ in the second line in \eqref{sys4}, one gets, using \eqref{sol-u},
\begin{align*}
\langle1,u_{*}\rangle_{L^{2}(\Omega)}=&v^{2}\langle1,(1-\nu N_{0})(\nu c_{*}u_{\nu}+\nu\widetilde w_{*}+v^{-2}u_{*})\rangle_{L^{2}(\Omega)}-v^{2}\nu c_{\phi}|\Omega|\\
&-v^{2}\nu\langle1,1_{\Omega}\SL_{0}(\psi_{\nu}^{\perp}+\nu c_{*}\phi_{\nu}^{\perp})\rangle_{L^{2}(\Omega)}\,.
\end{align*}
Combining this relation with the one in the first line of \eqref{sys4}, the couple $(c_{*},c_{\phi})$ solves the system of equations
\be\label{sol4.2}
\begin{cases}
\nu \|u_{\nu}\|^{2}_{L^{2}(\Omega)}c_{*}+[\phi_{\nu},S_{0}^{-1}1]_{-\frac12}c_{\phi}=z_{*}-\langle u_{\nu},\nu\widetilde w_{*}+v^{-2}u_{*} \rangle_{L^{2}(\Omega)}-[\phi_{\nu},P_{\perp}\phi]_{-\frac12}
\\
\nu\langle1,N_{0}u_{\nu}+1_{\Omega}\SL_{0}\phi_{\nu}^{\perp}\rangle_{L^{2}(\Omega)}\, c_{*}+|\Omega|\, c_{\phi}=
\langle 1,(1-\nu N_{0})\widetilde w_{*}-v^{-2}N_{0}u_{*}-1_{\Omega}\SL_{0}\psi_{\nu}^{\perp}\rangle_{L^{2}(\Omega)}\,.
\end{cases}
\ee
By \eqref{un}, writing $\phi_{\nu}=c_{\nu}S_{0}^{-1}1+\phi^{\perp}_{\nu}$,
$$
N_{0}u_{\nu}+1_{\Omega}\SL_{0}\phi^{\perp}_{\nu}=\nu^{-1} u_{\nu}-c_{\nu}1_{\Omega}\SL_{0}S_{0}^{-1}1=\nu^{-1} u_{\nu}-c_{\nu}1\,.
$$
Hence,
\begin{align*}
&\det\begin{bmatrix} \nu \|u_{\nu}\|^{2}_{L^{2}(\Omega)} & [\phi_{\nu},S_{0}^{-1}1]_{-\frac12}
\\
\nu\langle1,N_{0}u_{\nu}+1_{\Omega}\SL_{0}\phi_{\nu}^{\perp}\rangle_{L^{2}(\Omega)}&|\Omega|
\end{bmatrix}\\
=&\nu \left(|\Omega|\|u_{\nu}\|^{2}_{L^{2}(\Omega)}-[\phi_{\nu},S_{0}^{-1}1]_{-\frac12}\langle1,\nu^{-1} u_{\nu}-c_{\nu}1\rangle_{L^{2}(\Omega)}\right)\\
=&\nu \left(|\Omega|\|u_{\nu}\|^{2}_{L^{2}(\Omega)}+ c_{\Omega}|c_{\nu}|^{2}\right)>0
\end{align*}
and the system \eqref{sol4.2} is uniquely solvable. In conclusion, if the system \eqref{sys4} is solvable, then there is an unique solution $(\kappa,u,\phi)$ given by \eqref{kappa}, \eqref{sol-u} and by \eqref{sol-phi-perp}, \eqref{c-phi}, with $(c_{*},c_{\phi})$ solving \eqref{sol4.2}.\par
Conversely, given $(z_{*}, u_{*},\phi_{*})$ let us define $(\kappa,u,\phi)$ by \eqref{kappa}, \eqref{sol-u} and by \eqref{sol-phi-perp}, \eqref{c-phi}, with $(c_{*},c_{\phi})$ solving \eqref{sol4.2}.  Then,
$$
D_{\pm}\Phi_{4}(\kappa,u\oplus\phi)=( z_{*},(\mp2v{\nu}^{-1/2}\kappa u_{\nu}
+v^{2}(1-{\nu}N_{0})u -v^{2}{\nu}1_{\Omega}\SL_{0}\phi)\oplus\phi_{*})\,.
$$
Since 
$$
u=\nu(c_{*}u_{\nu}+\widetilde w_{*})+v^{-2}u_{*}\,,
$$
where $\widetilde w_{*}$ solves \eqref{bvp4}, one gets
$$
D_{\pm}\Phi_{4}(\kappa,u\oplus\phi)=( z_{*},(u_{*}+\widetilde u_{*})\oplus\phi_{*})\,,
$$
where 
$$
\widetilde u_{*}:=\mp2v{\nu}^{-1/2}\kappa u_{\nu}+v^{2}\nu(1-\nu N_{0})(c_{*}u_{\nu}+\widetilde w_{*})-
{\nu}N_{0}u_{*}-v^{2}{\nu}1_{\Omega}\SL_{0}\phi
\,.
$$
Thus, to conclude the proof we need to show that $\widetilde u_{*}=0$. At first, let us notice that $\widetilde u_{*}$ is harmonic. Indeed,
\begin{align*}
-\Delta_{\Omega}^{\max}\widetilde u_{*}=&\mp2v{\nu}^{1/2}\kappa u_{\nu}+v^{2}\nu^{2}c_{*}u_{\nu}-v^{2}\nu\Delta_{\Omega}^{\max}\widetilde w_{*}-
v^{2}\nu^{2}(c_{*}u_{\nu}+\widetilde w_{*})-\nu u_{*}\\
=&\mp2v{\nu}^{1/2}\kappa u_{\nu}+v^{2}\nu f_{*}-\nu u_{*}=0\,.
\end{align*}
Furthermore, by $\gamma_{1}^{\-}\widetilde w_{*}=\phi_{*}$ and  by $[\gamma_{1}]\SL_{0}\phi=-\phi$, 
\begin{align*}
\gamma_{1}^{\-}\widetilde u_{*}=&v^{2}\nu\big(\phi_{*}-\gamma_{1}^{\-} N_{0}u-\gamma_{1}^{\-}\SL_{0}\phi\big)\\
=&v^{2}\nu\left(\phi_{*}-\gamma_{1}^{\-} N_{0}u-\frac12\,\phi-\frac12\,[\gamma_{1}]\SL_{0}\phi-\gamma_{1}^{\-}\SL_{0}\phi\right)
\\
=&v^{2}\nu\left(\phi_{*}-\left(\gamma_{1}^{\-} N_{0}u+\frac12\,\phi+\gamma_{1}\SL_{0}\phi\right)\right)\\
=&0\,.
\end{align*}
Therefore, $\widetilde u_{*}$ is a constant; let us set $\widetilde u_{*}=\widetilde c_{*}$ and write $P\phi=c_{\phi}S_{0}^{-1}1$. Then, by the second line in \eqref{sol4.2}
\begin{align*}
\widetilde c_{*}|\Omega|=&\langle 1,\widetilde u_{*}\rangle_{L^{2}(\Omega)}
\\
=&v^{2}\nu\big(\langle 1,\widetilde w_{*}\rangle_{L^{2}(\Omega)}-\langle 1,N_{0}(\nu(c_{*}u_{\nu}+\widetilde w_{*})+v^{-2}u_{*})\rangle_{L^{2}(\Omega)}-\langle 1,1_{\Omega}\SL_{0}\phi\rangle_{L^{2}(\Omega)}\big)\\
=&v^{2}\nu\big(\langle 1,(1-\nu N_{0})\widetilde w_{*}-v^{-2}N_{0}u_{*}-1_{\Omega}\SL_{0}\psi_{\nu}^{\perp}\rangle_{L^{2}(\Omega)}-\nu c_{*}\langle 1,N_{0}u_{\nu}+1_{\Omega}\SL_{0}\phi_{\nu}^{\perp}\rangle_{L^{2}(\Omega)}\big)\\
=&0\,.
\end{align*}
Thus $\widetilde u_{*}=0$ and the proof is done.
\end{proof}
\begin{remark}
Let us notice that our hypothesis on the simplicity of the Neumann eigenvalue $\nu$, holds true for a generic domain $\Omega$, see \cite[Example 6.4]{Henry}.
\end{remark}
By the previous Lemma and by the implicit function argument recalled at the beginning of the Section, in a sufficiently small neighborhood of $0\in\RE$, there is a $\CO\times (\widehat X\backslash\{0\})$-valued, analytic  function $\ve\mapsto (\kappa_{\pm}(\ve), u_{\pm} (\ve)\oplus\phi_{\pm} (\ve))$,
$$
\kappa_{\pm}(\ve)=\pm v\sqrt\nu+\sum_{n=1}^{+\infty}\kappa_{\pm}^{(n)}\ve^{n}
\,,\qquad
u_{\pm} (\ve)=u_{\nu}+\sum_{n=1}^{+\infty} u_{\pm} ^{(n)}\ve^{n}\,,\qquad
\phi_{\pm} (\ve)=\phi_{\nu}+\sum_{n=1}^{+\infty}\phi_{\pm} ^{(n)}\ve^{n}
$$
which uniquely solves ${F}_{4}(\ve,\kappa(\ve), u(\ve)\oplus\phi(\ve))=0$. One can explicitly compute all the coefficients appearing in the expansions. In particular,
 $$
\big(\kappa_{\pm}^{(1)},u_{\pm} ^{(1)}\oplus\phi_{\pm} ^{(1)}\big)=
-\left(D_{\pm}\Phi_{4}\right)^{-1}
\left(
\left.\frac{d\ }{d\ve}\right|_{\ve=0}\Phi_{4}(\ve,\pm v\nu^{1/2},u_{\nu}\oplus\phi_{\nu})
\right)\,,
$$
equivalently,
$$
D_{\pm}\Phi_{4}(\kappa_{\pm}^{(1)},u ^{(1)}\oplus\phi ^{(1)})=\big(0,
\widehat u_{\nu}\oplus\widehat\phi_{\nu}\big)\,,
$$
where
$$
\begin{bmatrix}\widehat u_{\nu}\\\widehat\phi_{\nu}\,\end{bmatrix}:=
-
\begin{bmatrix}v_{1}^{2}\mp v^{3}\nu^{3/2}N_{(1)}&\mp v^{3}\nu^{3/2}1_{\Omega}\SL_{(1)}\,
\\
-2\rho\,\gamma^{\-}_{1}N_{0}&-2\rho\, \gamma_{1}\SL_{0}\end{bmatrix}
\begin{bmatrix} u_{\nu}\\ \phi_{\nu}\end{bmatrix}.
$$
Hence, by \eqref{kappa},
$$
\kappa_{\pm}^{(1)}=\pm\,\frac{1}2\,{\nu}^{1/2}\|u_{\nu}\|^{-2}_{L^{2}(\Omega)}\left(v^{-1}\langle u_{\nu},\widehat u_{\nu}\rangle_{L^{2}(\Omega)}+v\,\langle\gamma_{0}^{\-}u_{\nu},\widehat \phi_{\nu}\rangle_{\frac12,-\frac12}\right).
$$  
By
$$
N_{(1)}u=\frac{i}{4\pi}\,\langle 1,u\rangle_{L^{2}(\Omega)}1\,,\qquad 
\SL_{(1)}\phi=\frac{i}{4\pi}\,\langle 1,\phi\rangle_{\frac12,-\frac12}1\,,
$$
there follows
\begin{align*}
&v^{-1}\langle u_{\nu},\widehat u_{\nu}\rangle_{L^{2}(\Omega)}+v\,\langle\gamma_{0}^{\-}u_{\nu},\widehat \phi_{\nu}\rangle_{\frac12,-\frac12}\\
=&-v_{1}^{2}v^{-1}\|u_{\nu}\|^{2}_{L^{2}(\Omega)}-2\rho v\langle\gamma_{0}^{\-}u_{\nu},\gamma^{\-}_{1}N_{0}u_{\nu}\rangle_{\frac12,-\frac12}
-2\rho v\langle\gamma_{0}^{\-}u_{\nu},\gamma_{1}\SL_{0}\phi_{\nu}
\rangle_{\frac12,-\frac12}\,.
\end{align*}
Since both $(\nu^{-1}-N_{0})u_{\nu}=0$ and $1_{\Omega}\SL_{0}\phi_{\nu}$ are harmonic and 
$$
\gamma_{0}^{\-}\SL_{0}\phi_{\nu}=\gamma^{\-}_{0}\SL_{0}S_{0}^{-1}\gamma^{\-}_{0}(\nu^{-1}-N_{0})u_{\nu}=\gamma^{\-}_{0}(\nu^{-1}-N_{0})u_{\nu}\,,
$$ 
one gets 
$$
1_{\Omega}\SL_{0}\phi_{\nu}=(\nu^{-1}-N_{0})u_{\nu}\,.
$$
Furthermore, 
$$
\gamma_{1}\SL_{0}\phi_{\nu}=\frac12(\gamma_{1}^{\-}+\gamma_{1}^{\+})\SL_{0}\phi_{\nu}=\gamma_{1}^{\-}\SL_{0}\phi_{\nu}+\frac12\,[\gamma_{1}]\SL_{0}\phi_{\nu}=\gamma_{1}^{\-}\SL_{0}\phi_{\nu}-\frac12\,\phi_{\nu}\,.
$$
Green's formula gives
\begin{align*}
\langle\gamma_{0}^{\-}u_{\nu},\gamma_{1}^{\-}N_{0}u_{\nu}\rangle_{\frac12,-\frac12}=&
\langle u_{\nu},\Delta_{\Omega} N_{0}u_{\nu}\rangle_{L^{2}(\Omega)}-\langle \Delta_{\Omega} u_{\nu},N_{0}u_{\nu}\rangle_{L^{2}(\Omega)}\\
=&-\| u_{\nu}\|^{2}_{L^{2}(\Omega)}+\nu\langle  u_{\nu},N_{0}u_{\nu}\rangle_{L^{2}(\Omega)}\,,
\end{align*}
\begin{align*}
&\langle\gamma_{0}^{\-}u_{\nu},\gamma_{1}^{\-}\SL_{0}\phi_{\nu}\rangle_{\frac12,-\frac12}=
\langle u_{\nu},\Delta_{\Omega} 1_{\Omega}\SL_{0}\phi_{\nu}\rangle_{L^{2}(\Omega)}-\langle \Delta_{\Omega} u_{\nu},1_{\Omega}\SL_{0}\phi_{\nu}\rangle_{L^{2}(\Omega)}\\
=&\nu\langle u_{\nu},1_{\Omega}\SL_{0}\phi_{\nu}\rangle_{L^{2}(\Omega)}=
\langle u_{\nu},(1-\nu N_{0})u_{\nu}\rangle_{L^{2}(\Omega)}=
\|u_{\nu}\|^{2}_{L^{2}(\Omega)}-\nu \langle u_{\nu}, N_{0}u_{\nu}\rangle_{L^{2}(\Omega)}\,.
\end{align*}
Thus
$$
\kappa_{\pm}^{(1)}=\pm\,\frac{1}2\,{\nu}^{1/2}\|u_{\nu}\|^{-2}_{L^{2}(\Omega)}\left(\rho v\,\langle\gamma_{0}^{\-}u_{\nu}, \phi_{\nu}\rangle_{\frac12,-\frac12}-v_{1}^{2}v^{-1}\|u_{\nu}\|^{2}_{L^{2}(\Omega)}\right).
$$ 
By \eqref{phinu}, $\kappa_{\pm}^{(1)}$ does not depend on $\|u_{\nu}\|_{L^{2}(\Omega)}$; hence, by taking $\|u_{\nu}\|_{L^{2}(\Omega)}=1$, 
$$
\kappa_{\pm}^{(1)}=\pm\,\frac{1}2\,{\nu}^{1/2}\left(\rho v\,\langle\gamma_{0}^{\-}u_{\nu}, \phi_{\nu}\rangle_{\frac12,-\frac12}-v_{1}^{2}v^{-1}\right).
$$
and, writing $\kappa_{\pm}^{(1)}=\pm\,\kappa_{\nu}$, $\kappa_{\nu}\in\RE$,
$$
\kappa_{\pm}(\ve)=\pm (v\nu^{1/2}+\kappa_{\nu}\ve) +O(\ve^{2})\,;
$$
then
$$
\kappa^{2}_{\pm}(\ve)=v^{2}\nu+ 2v\nu^{1/2} \kappa_{\nu}\ve+O(\ve^{2})
$$
belongs to $\mathsf e(A(\ve))$.\par
We conclude by considering the case $\nu=0$, where the implicit function argument does not apply directly and a Lyapunov-Schmidt reduction (see, e.g., \cite[Sections 3 and 4, Chapter 5]{AP}) has to be used.\par
Let us denote by 
$$
D_{0}\Phi_{4}: \CO\times  \widehat X
\to \CO\times \widehat X
$$
the Frechet  differential at $(0,0,0\oplus c_{0}S_{0}^{-1}1)$ (see Remark \eqref{k=0}) of the map $(\kappa,u\oplus\phi)\mapsto\Phi_{4}(0,\kappa,u\oplus\phi)$, i.e., writing $P\phi=c_{\phi}S_{0}^{-1}1$,
\begin{align*}
D_{0}\Phi_{4}(\kappa,u\oplus\phi)=\left(c_{\Omega}\overline c_{0}c_{\phi},
v^{2}u\oplus\left(\gamma^{\-}_{1}N_{0}u +\left(\frac12+\gamma_{1}\SL_{0}\right)P_{\perp}\phi\right)
\right).
\end{align*}
One has 
$$
\ker(D_{0}\Phi_{4})=\{(\kappa,u\oplus\phi):u=0\,,\ \phi=0\}\simeq\CO\,.
$$
Writing $$
\phi_{*}=c_{\phi_{*}}S_{0}^{-1}1+\phi_{*}^{\perp}\,,\qquad u=c_{u_{*}}1+u_{*}^{\perp}\,,
$$ 
the system
$$
\begin{cases}
c_{\Omega}\overline c_{0}c_{\phi}=z_{*}\\
v^{2}u=u_{*}\\
\gamma^{\-}_{1}N_{0}u +\left(\frac12+\gamma_{1}\SL_{0}\right)P_{\perp}\phi=\phi_{*}
\end{cases}
$$
is solvable if and only if
$$
P(v^{2}\phi_{*}-\gamma^{\-}_{1}N_{0}u_{*})=(v^{2}c_{\phi_{*}}+c_{\Omega}^{-1}|\Omega|c_{u_{*}})S_{0}^{-1}1=0
$$
and in that case the solution is given by
\be\label{sol-LS}
\begin{cases}
c_{\phi}=c_{\Omega}^{-1}{\overline c}_{0}^{\,-1}z_{*}\\
u=v^{-2}u_{*}\\
P_{\perp}\phi=\left(\frac12+\gamma_{1}\SL_{0}
\right)_{\!\!\perp}^{-1}(\phi_{*}-v^{-2}\gamma^{\-}_{1}N_{0}u_{*})\,.
\end{cases}
\ee
This entails
$$
\ran(D_{0}\Phi_{4})=\CO\oplus \widehat X_{0}\,,
$$
where
$$\widehat X_{0}:=\big\{u_{*}\oplus\phi_{*}\in\widehat X: \langle 1,u_{*}\rangle_{L^{2}(\Omega)}+v^{2}[S_{0}^{-1}1,\phi_{*}]_{-\frac12}=0\big\}=\{1\oplus v^{2}S_{0}^{-1}1\}^{\perp}\,.
$$
Therefore, denoting by $\widehat P$ and by $\widehat P_{\perp}$ the orthogonal projectors onto the closed subspaces $\CO\oplus\widehat X_{0}$ and $(\CO\oplus\widehat X_{0})^{\perp}=\{0\}\oplus\text{span}(1\oplus v^{2}S_{0}^{-1}1)$ respectively, one has $\widehat P=1-\widehat P_{\perp}$ with
$$
\widehat P_{\perp}(z_{*},u_{*}\oplus\phi_{*}):=\left(0,(|\Omega|+v^{4}c_{\Omega})^{-1}\left(\langle 1,u_{*}\rangle_{L^{2}(\Omega)}+ v^{2}\langle1,\phi_{*}\rangle_{\frac12,-\frac12}\right)(1\oplus v^{2}S_{0}^{-1}1)\right)\,.
$$
Obviously,
\be\label{iff}
\Phi_{4}(\ve,\kappa,u\oplus\phi)=0\quad\Leftrightarrow\quad\begin{cases}
\widehat P\Phi_{4}(\ve,\kappa,u\oplus\phi)=0\\
\widehat P_{\perp}\Phi_{4}(\ve,\kappa,u\oplus\phi)=0\,.
\end{cases}
\ee
Defining 
$$
\widehat\Phi_{4}:U\times\CO\times  \widehat {X}\to \CO\times\CO\times  \widehat X_{0}\,,\qquad 
\widehat\Phi_{4}:=\widehat P\Phi_{4}\,,
$$
and denoting by $\widehat D_{0}\widehat\Phi_{4}$ the Frechet  differential at $(0,0,0\oplus c_{0}S_{0}^{-1}1)$  of the map $u\oplus\phi\mapsto\Psi(0,0,u\oplus\phi)$, one gets 
$$
\widehat D_{0}\widehat\Phi_{4}=\widehat PD_{0}\Phi_{4}\,.
$$
By \eqref{sol-LS}, $\widehat D_{0}\widehat\Phi_{4}:\widehat X\to \CO\times  \widehat X_{0}$ is a bounded bijection, and 
$$
\widehat D_{0}\widehat\Phi_{4}(u\oplus\phi)=(z_{*},u_{*}\oplus\phi_{*})\quad\Leftrightarrow\quad\begin{cases}
u=v^{-2}u_{*}\\
\phi=(c_{\Omega}\overline c_{0})^{-1}z_{*}S_{0}^{-1}1+\left(\frac12+\gamma_{1}\SL_{0}
\right)_{\!\!\perp}^{-1}(\phi_{*}-v^{-2}\gamma^{\-}_{1}N_{0}u_{*})\,.
\end{cases}
$$
Since 
\begin{align*}
\widehat\Phi_{4}(0,0,0\oplus c_{0}S_{0}^{-1}1)=\widehat P\Phi_{4}(0,0,0\oplus c_{0}S_{0}^{-1}1)=0\,,
\end{align*}
the implicit function theorem applies to $\widehat\Phi_{4}$ and therefore, in a sufficiently small neighborhood of $(0,0)\in\RE\times\CO$, there exists a $\widehat X_{0}$-valued analytic map $(\ve,\kappa)\mapsto u(\ve,\kappa)\oplus\phi(\ve,\kappa)$ such that 
$$
\widehat P\Phi_{4}(\ve,\kappa,u(\ve,\kappa)\oplus\phi(\ve,\kappa))=0\,,
$$
equivalently, 
\be\label{ftl.0}
\Phi_{4}(\ve,\kappa,u(\ve,\kappa)\oplus\phi(\ve,\kappa))=\widehat P_{\perp}\Phi_{4}(\ve,\kappa,u(\ve,\kappa)\oplus\phi(\ve,\kappa))\,.
\ee
Defining 
\be\label{sf4}
{f}(\ve,\kappa):=\langle 1\oplus v^{2}S_{0}^{-1}1, F_{4}(\ve,\kappa,u(\ve,\kappa)\oplus\phi(\ve,\kappa))\rangle_{\widehat X}\,,
\ee
and using the relations
$$
\begin{cases}
\langle u(\ve,\kappa)\oplus\phi(\ve,\kappa), u(\ve,\kappa)\oplus(\phi(\ve,\kappa)-c_{0}S_{0}^{-1}1)\rangle_{\widehat X}=0
\\
\langle u(\ve,\kappa)\oplus\phi(\ve,\kappa), 1\oplus v^{2}S_{0}^{-1}1\rangle_{\widehat X}=0\,,
\end{cases}
$$
\eqref{ftl.0} rewrites as
\be
\begin{cases}\label{ftl}
\langle 1,u(\ve,\kappa)\rangle_{L^{2}(\Omega)}=-v^{2}c_{0}c_{\Omega}\\
[S_{0}^{-1}1,\phi(\ve,\kappa)]_{-\frac12}\equiv\langle 1,\phi(\ve,\kappa)\rangle_{\frac12,-\frac12}=c_{0}c_{\Omega}\\
F_{4}(\ve,\kappa,u(\ve,\kappa)\oplus\phi(\ve,\kappa))=(|\Omega|+v^{2}c_{\Omega})^{-1}
{f}(\ve,\kappa)(1\oplus v^{2}S_{0}^{-1}1)\,.
\end{cases}
\ee
Moreover, by \eqref{iff} and \eqref{sf4},
$$
\Phi_{4}(\ve,\kappa,u(\ve,\kappa)\oplus\phi(\ve,\kappa))=0\ \Leftrightarrow\ 
\widehat P_{\perp}\Phi_{4}(\ve,\kappa,u(\ve,\kappa)\oplus\phi(\ve,\kappa))=0
\ \Leftrightarrow\ {f}(\ve,\kappa)=0
\,.
$$
In the following we use the shorthand notation
$$
\partial^{m}_{\ve}\partial_{\kappa}^{n} {f}_{0}:=\partial^{m}_{\ve}\partial_{\kappa}^{n} {f}(0,0)\,,\qquad 
\partial^{m}_{\ve}\partial_{\kappa}^{n} u_{0}:=\partial^{m}_{\ve}\partial_{\kappa}^{n} u(0,0)\,,\qquad 
\partial^{m}_{\ve}\partial_{\kappa}^{n}\phi_{0}:=\partial^{m}_{\ve}\partial_{\kappa}^{n}\phi(0,0)\,,
$$
Since $F_{4}$ is a function of $\kappa^{2}$, $\partial_{k}\Psi(0,0,0\oplus c_{0}S_{0}^{-1}1)=0$ and so, 
$$
\partial_{\kappa}u_{0}\oplus\partial_{k}\phi_{0}=-(\widehat D_{0}\Psi)^{-1}\partial_{k}\Psi(0,0,0\oplus c_{0}S_{0}^{-1}1)=0\oplus 0\,.
$$
By the first two lines in \eqref{ftl}),
$$
\left\langle1,\partial^{m}_{\ve}\partial_{\kappa}^{n} u_{0}\right\rangle_{L^{2}(\Omega)}=0\,,\qquad
\left[S_{0}^{-1}1,\partial^{m}_{\ve}\partial_{\kappa}^{n}\phi_{0}\right]_{-\frac12}=0\,,
$$
One has $f_{0}:=f(0,0)=0$ and, by 
\begin{align*}
&\langle1,u\rangle_{L^{2}(\Omega)}+
\left[S_{0}^{-1}1,\gamma_{1}^{\-}N_{0}u+
\left(\frac12+\gamma_{1}\SL_{0}\right)\phi\right]_{-\frac12}
\\
=&\langle 1,u\rangle_{L^{2}(\Omega)}+\langle 1, \gamma_{1}^{\-}N_{0}u\rangle_{\frac12,-\frac12}
=\langle 1,u\rangle_{L^{2}(\Omega)}-\langle 1,u\rangle_{L^{2}(\Omega)}
=0\,,
\end{align*}
which holds for any $u\oplus \phi\in \widehat X$, 
 \begin{align*}
\partial_{\kappa} {f}_{0}=&
v^{2}\left\langle1,\partial_{\kappa} u_{0}\right\rangle_{L^{2}(\Omega)}+v^{2}
\left[S_{0}^{-1}1,\left(\gamma_{1}^{\-}N_{0}\,\partial_{\kappa} u_{0}+\left(\frac12+\gamma_{1}\SL_{0}\right)\partial_{\kappa} \phi_{0}\right)\right]_{-\frac12}
=0\,,
\end{align*}
\begin{align*}
\frac12\,\partial^{2}_{\kappa} {f}_{0}=&
\frac{v^{2}}2\,\left\langle1,\partial^{2}_{\kappa} u_{0}\right\rangle_{L^{2}(\Omega)}+\frac{v^{2}}2\,
\left[S_{0}^{-1}1,\left(\gamma_{1}^{\-}N_{0}\partial^{2}_{\kappa} u_{0}+\left(\frac12+\gamma_{1}\SL_{0}\right)\partial^{2}_{\kappa} \phi_{0}\right)\right]_{-\frac12}
\\
&-c_{0}\langle 1,1_{\Omega}\SL_{0}S_{0}^{-1}1\rangle_{L^{2}(\Omega)}
=-c_{0}|\Omega|
\,,
\end{align*}
\begin{align*}
\frac16\,\partial^{3}_{\kappa} {f}_{0}=&
\frac{v^{2}}6\,\left\langle1,\partial^{3}_{\kappa} u_{0}\right\rangle_{L^{2}(\Omega)}+\frac{v^{2}}6\,
\left[S_{0}^{-1}1,\left(\gamma_{1}^{\-}N_{0}\partial^{3}_{\kappa} u_{0}+\left(\frac12+\gamma_{1}\SL_{0}\right)\partial^{3}_{\kappa} \phi_{0}\right)\right]_{-\frac12}
\\
&-\frac13\,\left\langle1,N_{0}\partial_{\kappa} u_{0}+1_{\Omega}\SL_{0}
\partial_{\kappa} \phi_{0}\right\rangle_{L^{2}(\Omega)}=0\,,
\end{align*}
\begin{align*}
\partial_{\ve} {f}_{0}=& 
v^{2}\left\langle1,\partial_{\ve} u_{0}\right\rangle_{L^{2}(\Omega)}+v^{2}
\left[S_{0}^{-1}1,\left(\gamma_{1}^{\-}N_{0}\partial_{\ve} u_{0}+\left(\frac12+\gamma_{1}\SL_{0}\right)\partial_{\ve} \phi_{0}\right)\right]_{-\frac12}
\\
&-2\rho c_{0}[S_{0}^{-1}1,\gamma_{1}\SL_{0}S_{0}^{-1}1]_{-\frac12}
=\rho c_{0}c_{\Omega}\,,
\end{align*}
\begin{align*}
\partial_{\ve}\partial_{\kappa}{f}_{0}=&
v^{2}\left\langle1,\partial_{\ve}\partial_{\kappa}u_{0}\right\rangle_{L^{2}(\Omega)}+v^{2}
\left[S_{0}^{-1}1,\left(\gamma_{1}^{\-}N_{0}\partial_{\ve}\partial_{\kappa}u_{0}+\left(\frac12+\gamma_{1}\SL_{0}\right)\partial_{\ve}\partial_{\kappa}\phi_{0}\right)\right]_{-\frac12}
\\
&+v_{1}^{2}\left\langle1,\partial_{\kappa}u_{0}\right\rangle_{L^{2}(\Omega)}
-2\rho v^{2}\left[S_{0}^{-1}1,\left(\gamma_{1}^{\-}N_{0}\partial_{\kappa}u_{0}+\gamma_{1}\SL_{0}\partial_{\kappa}\phi_{0}\right)\right]_{-\frac12}=0
\,,
\end{align*}
\begin{align*}
\frac12\,\partial_{\ve}\partial^{2}_{\kappa}{f}_{0}=&
\frac{v^{2}}2\left\langle1,\partial_{\ve}\partial^{2}_{\kappa}u_{0}\right\rangle_{L^{2}(\Omega)}+\frac{v^{2}}2
\left[S_{0}^{-1}1,\left(\gamma_{1}^{\-}N_{0}\partial_{\ve}\partial_{\kappa}u_{0}+\left(\frac12+\gamma_{1}\SL_{0}\right)\partial_{\ve}\partial^{2}_{\kappa}\phi_{0}\right)\right]_{-\frac12}
\\
&-\frac12\,\left\langle1,N_{0}\partial_{\ve}u_{0}+1_{\Omega}\SL_{0}\partial_{\ve}\phi_{0}+
v_{1}^{2}\partial^{2}_{\kappa}u_{0}\right\rangle_{L^{2}(\Omega)}\\
&-\rho v^{2}\left[S_{0}^{-1}1,\left(\gamma_{1}^{\-}N_{0}\partial^{2}_{\kappa}u_{0}+\gamma_{1}\SL_{0}\partial^{2}_{\kappa}\phi_{0}\right)\right]_{-\frac12}\\
=&-\frac12\,\left\langle1,N_{0}\partial_{\ve}u_{0}+1_{\Omega}\SL_{0}\partial_{\ve}\phi_{0}+
v_{1}^{2}\partial^{2}_{\kappa}u_{0}\right\rangle_{L^{2}(\Omega)}
+\frac12\,\rho v^{2}\left[S_{0}^{-1}1,\partial^{2}_{k}\phi_{0}\right]_{-\frac12}\\
=&-\frac12\,\left\langle1,N_{0}\partial_{\ve}u_{0}+1_{\Omega}\SL_{0}\partial_{\ve}\phi_{0}\right\rangle_{L^{2}(\Omega)}
\,.
\end{align*}
\begin{remark} To determine $\partial_{\ve}u_{0}\oplus\partial_{\ve}\phi_{0}$ one uses the relation  
$\Psi(\ve,\kappa,u(\ve,\kappa),\phi(\ve,\kappa)=0$ which gives 
\begin{align*}
&\widehat D_{0}\Psi(\partial_{\ve}u_{0}\oplus\partial_{\ve}\phi_{0})
=-\partial_{\ve}\Psi(0,0,0\oplus c_{0}S_{0}^{-1}1)
=-\widehat P(0,\partial_{\ve}F_{4}(0,0,0\oplus c_{0}S_{0}^{-1}1))\\
=&
2\rho\widehat P(0,0\oplus c_{0}\gamma_{1}\SL_{0}S_{0}^{-1}1)=2\rho\big((0,0\oplus c_{0}\gamma_{1}\SL_{0}S_{0}^{-1}1)-\widehat P_{\perp}(0,0\oplus c_{0}\gamma_{1}\SL_{0}S_{0}^{-1}1)\big)
\\
=&
2\rho c_{0}\left(0, 0\oplus \gamma_{1}\SL_{0}S_{0}^{-1}1-
\frac{v^{2}c_{\Omega}}{|\Omega|+v^{4}c_{\Omega}}\,(1\oplus v^{2}S_{0}^{-1}1)
\right).
\end{align*}
Hence,
$$
\partial_{\ve}u_{0}=-
\frac{2\rho c_{0}c_{\Omega}}{|\Omega|+v^{4}c_{\Omega}}\,1=
-
\frac{2\rho c_{0}\omega^{2}_{M}}{1+v^{4}\omega_{M}^{2}}\,1
$$
$$
\partial_{\ve}\phi_{0}=-2\rho c_{0}\left(\frac12+\gamma_{1}\SL_{0}
\right)_{\!\!\perp}^{-1}\left(\gamma_{1}\SL_{0}S_{0}^{-1}1-
\frac{\omega^{2}_{M}}{1+v^{4}\omega_{M}^{2}}
\left(v^{4}S_{0}^{-1}1-\gamma^{\-}_{1}N_{0}1\right)\right).
$$
\end{remark}
Now, we introduce the analytic function 
\begin{align*}
&{g}(t,z):=t^{-2}{f}(t^{2},tz)={f}_{0}\,t^{-2}+\partial_{\ve} {f}_{0}+\partial_{\kappa} {f}_{0}\,t^{-1}z+\frac12 \,\partial_{\ve}^{2} {f}_{0}\,t^{2}\\
&+\partial_{\ve}\partial_{\kappa} {f}_{0}\,tz+\frac12\, \partial^{2}_{\kappa} {f}_{0}\,z^{2}+\frac16\,\partial^{3}_{\ve} {f}_{0}t^{4}
+\frac12\,\partial^{2}_{\ve}\partial_{\kappa} {f}_{0}\,t^{3}z
\\
&+
\frac12\,\partial_{\ve}\partial^{2}_{\kappa} {f}_{0}\,t^{2}z^{2}+
\frac16\,\partial^{3}_{\kappa} {f}_{0}\,tz^{3}+
O(t^{2}(t^{2}+|z|^{2})^{2})\\
=&\rho c_{0}c_{\Omega}-c_{0}|\Omega|z^{2}
+
\frac12\,\partial_{\ve}\partial^{2}_{\kappa} {f}_{0}\,t^{2}z^{2}
+O(t^{4})+O(|tz|^{3})+
O(t^{2}(t^{2}+|z|^{2})^{2})\,.
\end{align*}
Hence, by
$$
{g}(0,z)=\rho c_{0}c_{\Omega}
-c_{0}|\Omega|z^{2}\,,
$$
one gets 
$$
{g}(0,\pm\sqrt\rho\,\omega_{M})=0\,.
$$
Then
$$
\partial_{z} {g}(0,\pm\sqrt\rho\,\omega_{M})=\mp 2c_{0}|\Omega|\sqrt\rho\,\omega_{M}\not=0\,,
$$
$$
\partial_{t} {g}(0,\pm\sqrt\rho\,\omega_{M})=0\,,\qquad
\partial^{2}_{t} {g}(0,\pm\sqrt\rho\,\omega_{M})=\partial_{\ve}\partial^{2}_{\kappa} {f}_{0}\,\rho\,\omega_{M}^{2}
\,.
$$
By the implicit function theorem, there exists an unique analytic function   $t\mapsto z_{\pm}(t)$, in a neighborhood of $0\in\RE$, 
\begin{align*}
z_{\pm}(t)=&\pm\sqrt\rho\,\omega_{M}-\frac12\left(\partial_{z} {g}(0,\pm\sqrt\rho\,\omega_{M})\right)^{-1}\partial^{2}_{t} {g}(0,\pm\sqrt\rho\,\omega_{M})\,t^{2}+O(t^{3})\\
=&\pm\sqrt\rho\,\omega_{M}\left(1+\frac{\partial_{\ve}\partial^{2}_{\kappa} {f}_{0}}{4 c_{0}|\Omega|}\ t^{2}\right)+O(t^{3})
\end{align*}
such that ${g}(t,z_{\pm}(t))=0$. Therefore, for $\ve>0$ sufficiently small,
$$
\kappa_{\pm}(\ve)=\sqrt\ve\,z_{\pm}(\sqrt\ve)=
\pm\sqrt\rho\,\omega_{M}\left(1+\frac{\partial_{\ve}\partial^{2}_{\kappa} {f}_{0}}{4 c_{0}|\Omega|}\ \ve\right)\sqrt\ve+O(\ve^{2})
$$
uniquely solves 
$$
F_{4}(\ve,\kappa(\ve),u(\ve,\kappa(\ve))\oplus\phi(\ve,\kappa(\ve)))=0
$$
and  $u(\ve,\kappa(\ve))\oplus\phi(\ve,\kappa(\ve))\not=0$ by \eqref{ftl}. Hence,
$$
\kappa^{2}_{\pm}(\ve)=
\rho\,\omega_{M}\left(1+\frac{\partial_{\ve}\partial^{2}_{\kappa} {f}_{0}}{2 c_{0}|\Omega|}\ \ve\right)\ve+O(\ve^{5/2})
$$
belongs to $\mathsf e(A(\ve))$. 

\vskip15pt\noindent
\textbf{Acknowledgements.} The authors acknowledge funding from the Next Generation EU-Prin project 2022 ''Singular Interactions and Effective Models in Mathematical Physics''. A.P.  acknowledge the support of the National Group of Mathematical Physics (GNFM-INdAM).

\end{document}